\documentclass[10pt]{article}

\usepackage[margin=1in]{geometry}
\usepackage{amsmath,amssymb,amsthm,mathtools,mathrsfs}
\usepackage{booktabs}
\usepackage{enumitem}
\usepackage{microtype}
\usepackage{xcolor}
\usepackage[colorlinks=true,citecolor=blue!60!black,
  linkcolor=blue!60!black,urlcolor=blue!60!black]{hyperref}
\usepackage[nameinlink,noabbrev]{cleveref}

\allowdisplaybreaks
\numberwithin{equation}{section}
\setlist{nosep,leftmargin=*}

\newtheorem{theorem}{Theorem}[section]
\newtheorem{proposition}[theorem]{Proposition}
\newtheorem{lemma}[theorem]{Lemma}
\newtheorem{corollary}[theorem]{Corollary}

\theoremstyle{definition}

\newtheorem{example}[theorem]{Example}
\theoremstyle{remark}
\newtheorem{remark}[theorem]{Remark}

\DeclareMathOperator{\Tr}{Tr}
\DeclareMathOperator{\E}{\mathbb E}

\DeclareMathOperator{\Cov}{Cov}
\DeclareMathOperator{\Rea}{Re}
\DeclareMathOperator{\Ima}{Im}
\DeclareMathOperator{\Spec}{Spec}
\DeclareMathOperator{\dist}{dist}
\newcommand{\C}{\mathbb C}
\newcommand{\R}{\mathbb R}
\newcommand{\N}{\mathbb N}
\newcommand{\M}{\mathrm M}
\newcommand{\one}{\mathbf 1}
\newcommand{\tr}{\operatorname{tr}}

\newcommand{\cG}{\mathcal G}
\newcommand{\cL}{\mathcal L}

\newcommand{\norm}[1]{\left\lVert #1\right\rVert}
\newcommand{\abs}[1]{\left\lvert #1\right\rvert}
\newcommand{\ip}[2]{\left\langle #1,#2\right\rangle}
\newcommand{\transpose}{\mathsf T}

\hypersetup{
  pdftitle={Fourth-Moment Strong Universality for Finite-Type Transpose-Correlated Random Matrices},
  pdfauthor={Yanjin Xiang and Zhihua Zhang},
  pdfkeywords={strong convergence, universality, fourth moment, transpose correlation, variance profile, free probability}
}

\title{Fourth-Moment Strong Universality for Finite-Type\\
Transpose-Correlated Random Matrices}
\author{%
  Yanjin Xiang and Zhihua Zhang\\
  School of Mathematical Sciences, Peking University\\
  {\small\texttt{yjxiang@stu.pku.edu.cn; zhzhang@math.pku.edu.cn}}
}
\date{}

\begin{document}

\maketitle

\begin{abstract}
We prove a strong-universality theorem at the exact fourth-moment threshold
for finite families of non-Hermitian random matrices assembled from
independent unordered-pair vector atoms.  Within one atom, the matrix colors
and the two endpoint orientations may have arbitrary joint real covariance,
subject to reversal consistency across ordered type pairs and endpoint
exchangeability in same-type blocks; the law may also depend on finitely many
endpoint types.  The matrices may be
adjoined to an arbitrary deterministic tuple that converges jointly strongly
with the type projections.  For every fixed matrix amplification and fixed
noncommutative \(*\)-polynomial, the resulting tuple converges strongly to an
explicit covariance-matched free Gaussian family.  Cross-type blocks are
described by masked circular variables, whereas same-type blocks split into
independent endpoint-symmetric and endpoint-antisymmetric semicircular
sectors.  Only a finite radial fourth moment is assumed off the diagonal, and
a finite second moment suffices on the diagonal.  The mode of convergence
depends on the coupling: corners of one infinite array converge almost surely
on a common event, while fixed-law nonnested triangular arrays converge in
probability for each fixed test.  As an application, we obtain
exact-fourth-moment strong limits for finite-separable continuous left/right profiles,
including separately weighted literal-transpose terms.  For the nested
almost-sure formulation, the fourth-moment threshold is sharp already on the
Wigner subfamily; at this threshold arbitrary fresh rows admit no
coupling-invariant almost-sure upgrade of our in-probability conclusion.
\end{abstract}

\medskip
\noindent\textit{2020 Mathematics Subject Classification.}
60B20, 46L54.

\noindent\textit{Keywords.}
Strong convergence; universality; fourth moment; transpose correlation;
variance profile; operator-valued free probability.

\section{Introduction}
\label{sec:introduction}

Strong convergence of random matrices asks for substantially more than the
convergence of empirical eigenvalue distributions.  Given a random tuple
\(X_n\), it requires both normalized-trace convergence and operator-norm
convergence for every fixed noncommutative \(*\)-polynomial.  The norm part
detects spectral outliers and the edges of every self-adjoint polynomial
linearization; in a nonnormal problem it also controls the singular values of
polynomials that need not be normal.  This is different from a circular-law or
Brown-measure question, where the limiting eigenvalue measure is the principal
object and small singular values play a separate role.  The strong-convergence
program began with invariant ensembles and was subsequently extended to
several Wigner and deterministic-matrix settings
\cite{HaagerupThorbjornsen2005,CapitaineDonatiMartin2007,Male2012,
Anderson2013,BelinschiCapitaine2017}.

The independent object in the present paper is not a directed matrix entry.
For each unordered pair \(\{i,j\}\), we collect all colors in both directions,
\[
  \bigl(\xi_{ij}^{(1)},\ldots,\xi_{ij}^{(d)},
        \xi_{ji}^{(1)},\ldots,\xi_{ji}^{(d)}\bigr)\in\C^{2d}.
\]
These vectors are independent as \(\{i,j\}\) varies, but their coordinates
inside a single vector may be arbitrarily correlated subject to endpoint
reversal consistency; in a same-type block the pair law is exchangeable under
swapping the two orientations.  Thus the model contains colored elliptic-type
dependence, real and complex pseudocovariances, and correlation between a
matrix and its literal transpose.  The local law may
also depend on a finite type attached to each endpoint.  Treating the two
directed entries as independent would destroy precisely the covariance data
that we wish to retain.  Conversely, describing the covariance only as a
complex covariance matrix would discard pseudocovariance.  We therefore use
the complete covariance after identifying \(\C^{2d}\) with \(\R^{4d}\).

Our purpose is to determine the joint strong limit of these ensembles at the
finite-fourth-moment threshold, after adjoining deterministic structure.  If
\(P_{a,n}\) denotes the projection onto vertices of type \(a\), the required
deterministic hypothesis is joint strong convergence of
\[
  (P_{1,n},\ldots,P_{r,n},A_{1,n},\ldots,A_{m,n}).
\]
This joint assumption cannot be replaced by separate strong convergence of
the type projections and the companion tuple: mixed words in the two families
are part of the desired limit.  The comparison must preserve those mixed
words, survive self-adjoint linearization, and remain valid after tensoring
with an arbitrary but fixed matrix algebra.

\subsection*{Main contribution}

We prove that the full real covariance of the finitely many type-pair laws
determines the joint strong limit.  The limit is explicit.  For two different
types it is a finite linear combination of rectangular corners
\(p_a c p_b\) of circular elements.  Within one type, endpoint exchangeability
splits the covariance-matched Gaussian pair into an endpoint-symmetric sector
and an endpoint-antisymmetric sector.  These sectors are independent, and the
corresponding limit is a correctly normalized combination of two free
semicircular families.  The construction retains all correlations among
colors and orientations and is independent of the chosen real covariance
factorization.

The result has two deliberately different probabilistic formulations.  In the
\emph{nested} model, all matrices are upper-left corners of one infinite array
sampled once.  There is then one probability-one event on which convergence
holds for every fixed matrix level and every fixed polynomial.  In the
\emph{nonnested triangular} model, each size may be resampled from the same
finite family of type-pair laws.  Here convergence holds in probability,
separately for each fixed matrix level and polynomial.  No independence across
different rows is needed for this second statement.

This distinction is not cosmetic at the exact fourth moment.  A law with tail
of order
\(t^{-4}(\log t)^{-2}\) has finite fourth moment, while the probability of a
\(\sqrt n\)-scale entry among the order-\(n^2\) entries of a fresh row is
\((\log n)^{-2}\).  The probabilities vanish, which is consistent with
convergence in probability, but they need not be summable over independently
resampled rows.  In contrast, the nested proof uses the one-array structure
both for the original atoms and for the auxiliary type-block completions in
the tail argument.  For the nested almost-sure formulation, the fourth-moment
assumption is sharp already for the Wigner subfamily by the necessary part of
the Bai--Yin edge theorem \cite{BaiYin1988}.  The fresh-row example instead
shows why that almost-sure mode has no coupling-invariant extension at the
threshold; it does not contradict the fixed-law in-probability theorem.

At the deterministic level, our conclusion is joint strong \(*\)-convergence
at every fixed matrix amplification.  More precisely, for a prescribed
\(q<\infty\) and a prescribed \(M_q(\C)\)-valued \(*\)-polynomial \(Q\), we
obtain convergence of both
\[
  (\tr_q\otimes\tr_n)Q
  \quad\text{and}\quad
  \norm{Q}.
\]
The degree, coefficients, and amplification are fixed before
\(n\to\infty\).  We make no assertion uniform in \(q\), and in particular no
claim when \(q=q_n\to\infty\).

The finite-type theorem is designed as a reusable transfer principle rather
than as a single variance-profile calculation.  Its concrete application in
\cref{sec:profiles} starts from a homogeneous pair array and the deterministic
coordinate matrix
\(T_n=\operatorname{diag}(1/n,\ldots,n/n)\).  Finite sums of continuous
left/right multipliers applied to \(X_n\) and to the literal transpose
\(X_n^{\transpose}\) then converge strongly to a
\(C([0,1])\)-valued circular/elliptic family.  The transpose is installed as
an additional color at the unordered-pair level; it is not inferred from
strong convergence of \(X_n\) alone.  This application covers fixed
finite-separable masks.  It does not assert a theorem for arbitrary continuous
covariance kernels, continuum-indexed local laws, growing type counts, or
growing separable rank.

\subsection*{Relation to previous work}

An earlier preprint by the authors \cite{XiangZhang2026} treats the
homogeneous i.i.d.\ directed-entry model under the same fourth-moment
threshold, proving almost-sure joint strong \(*\)-convergence, including every
fixed matrix amplification, to a free circular family.  Its main
strong-convergence theorem is recovered from \cref{thm:nested-main} by taking
\(r=1\), omitting the deterministic companions, and choosing the
unordered-pair law to be the product of the prescribed entry marginals across
colors and orientations, with the diagonal law chosen accordingly.  The
present work extends that setting to finitely many endpoint types and
arbitrary dependence within each unordered-pair color/orientation vector,
subject to the model's reversal-consistency conditions; it also incorporates
jointly strongly convergent deterministic companions and adds the fixed-law
nonnested in-probability theorem and the finite-separable transpose-profile
application.  Although the proof follows the same
comparison--linearization--truncation architecture, the correlated finite-type
setting additionally requires identification of the covariance-matched
Gaussian endpoint and exact-$L^4$ tail control for correlated pair vectors,
including the endpoint-symmetric and endpoint-antisymmetric sectors within
same-type blocks.

Our point of departure is the covariance-matched universality theory of
Brailovskaya and van Handel \cite{BvH2024}.  Their comparison inequalities
apply to sums of independent self-adjoint random matrices, allowing arbitrary
dependence inside an individual summand and arbitrary deterministic means.
After self-adjoint linearization, one complete unordered-pair vector can
therefore be kept as one summand.  This is the quantitative engine of our
bounded comparison.  The abstract comparison theorem, however, does not by
itself identify the profiled Gaussian endpoint, remove truncation at exactly
four moments in every directed type block, or give the companion-stable
finite-type strong-limit statement proved here.  Related
concentration/free-probability connections for independent matrix sums appear in
\cite{BBvH2023}.

Exact-fourth-moment strong convergence in the presence of deterministic
matrices is already known for ordinary Wigner coordinates.  In particular,
Belinschi and Capitaine \cite{BelinschiCapitaine2017} establish polynomial
spectral and norm results for independent Wigner matrices with deterministic
companions, while Anderson \cite{Anderson2013} proves fourth-moment polynomial
norm convergence without the general companion structure used here.  These
works are important benchmarks and also supply sharp tail technology.  Our
extension concerns a different local independence unit: a type-dependent
vector atom that jointly couples all colors and both endpoint orientations.
Thus deterministic companions at the fourth-moment threshold are not, by
themselves, a novelty claim of this paper.

The covariance-matched Gaussian endpoint is treated using established
Gaussian strong-convergence results.  Male \cite{Male2012} provides the
unitarily invariant/deterministic-companion framework, and Fan--Sun--Wang
\cite{FanSunWang2021} give the GOE companion theorem that we use after a
doubling and corner construction.  The latter realizes real Ginibre sources,
and hence all cross-type and same-type Gaussian blocks, as fixed polynomials
in independent GOE matrices and one jointly convergent deterministic tuple.
The fixed matrix amplification in our statement is then obtained by a
separate complete-isometry argument; it is not attributed to the scalar GOE
theorem itself.

Classical elliptic-matrix results address a different level of convergence.
Naumov \cite{Naumov2012} and Nguyen--O'Rourke
\cite{NguyenORourke2015} prove elliptic laws for matrices whose opposite
entries are paired, while Adhikari--Bose \cite{AdhikariBose2019} obtain
asymptotic-freeness and Brown-measure results for elliptic and related
matrices, including deterministic matrices under their hypotheses.  These
results identify empirical eigenvalue or \(*\)-distribution limits; they do
not give joint operator-norm convergence for every polynomial with the
finite-type companions considered here.  Likewise, the deformed
variance-profile work of Alt--Kr\"uger \cite{AltKruger2024} identifies a
limiting Brown measure and its support through pseudospectra, whereas our
consequences concern spectra only for self-adjoint polynomial linearizations
and singular-value sets, not Brown measures of nonnormal polynomials.

There are also neighboring results with a different dependence structure or
endpoint.  Schultz \cite{Schultz2005} proves strong polynomial-norm limits for
independent GOE/GSE and related Gaussian ensembles.  Banna--C\'ebron
\cite{BannaCebron2023} obtain operator-valued Cauchy-transform convergence for
Wigner and Wishart matrices with free or exchangeable noncommutative entries,
and Reker \cite{Reker2022} proves stochastic operator-norm control for a
Hermitian ensemble with cumulant correlations decaying across matrix entries.
The first is Gaussian, the second has noncommutative entries and a
resolvent-distribution target, and the third permits longer-range dependence
but does not identify a companion-stable joint strong limit.  None supplies
the exact-fourth transfer theorem for classical vector atoms independent
across unordered pairs that is proved below.

The closest result on the operator-valued/profile axis is the Gaussian theory
of Jekel--Lee--Nelson--Pi \cite{JLNP2025}.  It covers broad operator-valued
covariance laws and continuously weighted Gaussian Wigner models, with
compatible deterministic approximants.  Its Gaussian covariance-map regime
is in important respects broader than our finite-separable application.  Our
contribution is instead the non-Gaussian transfer at the exact fourth moment
for finitely many unordered-pair laws.  For a single symmetric matrix with a
variance profile, Cheliotis--Louvaris \cite{CheliotisLouvaris2025} obtain
operator-norm limits under exact-fourth/Lindeberg assumptions in probability
and stronger hypotheses for their general almost-sure results.  Their theorem
does not contain joint polynomial strong convergence with arbitrary
companions or the colored endpoint-orientation covariance considered here,
but it precludes any broad claim that exact-fourth variance-profile norm
convergence is new.

Beyond the homogeneous independent-entry case of \cite{XiangZhang2026}, within
the class of classical matrix models that are independent across unordered
pairs, we are not aware of a theorem that simultaneously provides
exact-fourth-moment non-Gaussian universality, finitely typed joint
color/orientation covariance, arbitrary jointly strongly convergent
companions, and strong \(*\)-convergence at every fixed matrix level.  Existing
exact-fourth companion-stable theorems cover scalar Wigner coordinates, while
the broad companion-compatible operator-valued/profile results are Gaussian.
Our finite-type theorem bridges these regimes.  Each ingredient separately,
as well as elliptic weak limits and Gaussian transpose-correlated models, has
substantial prior art.

\subsection*{Proof strategy}

The proof has five stages.  First, a companion-augmented linearization reduces
fixed polynomial norm questions to spectra and moments of fixed self-adjoint
affine pencils.  At a fixed matrix level, all colors and both orientations
belonging to \(\{i,j\}\) are grouped into one centered self-adjoint summand.
For bounded atoms, the Brailovskaya--van Handel parameters satisfy
\[
  R=O(n^{-1/2}),\qquad \sigma=O(1),\qquad
  \sigma_*=O(n^{-1/2}),
\]
with constants allowed to depend on the fixed pencil and truncation level.
Their spectral and moment comparison estimates therefore transfer the
Gaussian pencil limit to the original bounded model.  The companion tuple
enters only through the deterministic mean and does not change these random
parameters.

Second, we identify the Gaussian comparator by factoring the complete real
covariance of every type-pair atom.  Cross-type factors are masked real
Ginibre matrices.  For a same-type block, endpoint exchangeability makes the
symmetric and antisymmetric Gaussian sectors independent.  A \(2n\)-dimensional
GOE doubling realizes the Ginibre sources as corners, and the GOE theorem of
Fan--Sun--Wang yields joint strong convergence with the lifted type projections
and companions.  A possible mismatch on the Gaussian diagonal is a diagonal
matrix of norm \(o(1)\) and hence does not alter the limit.

Third, centered radial truncation reduces the finite-fourth model to the
bounded one without changing endpoint-reversal consistency.  Cross-type
residual blocks are controlled by separate rectangular completions and the
Bai--Yin iid norm estimate, applied separately to real and imaginary parts
\cite{BaiYin1986}; same-type residuals are split into
symmetric and skew-symmetric sectors and controlled using both signs in
Anderson's fourth-moment theorem \cite{Anderson2013}.  These estimates use no
coordinate independence inside a pair atom.  The residual covariances vanish,
and principal covariance square roots couple the truncated free limits on a
common free Gaussian source.

Fourth, the comparison estimates are summable in the nested bounded model.
Together with nested auxiliary completions and a countable dense family of
rational pencils, this produces one common almost-sure event.  In a nonnested
row, the same estimates and tail controls give convergence in probability for
each fixed test.  Finally, polarization, faithfulness of the limiting trace,
and target-independent linearization upgrade pencil control to joint strong
\(*\)-convergence.  The fixed-matrix-level assertion follows from complete
isometry of the induced embedding into the norm ultraproduct.

The paper is organized as follows.  \Cref{sec:model-results} gives the model,
the explicit free Gaussian endpoint, and the two main theorems.
\Cref{sec:deterministic-tools} records the companion, amplification, and
linearization tools.  \Cref{sec:gaussian-comparator} proves the Gaussian
identification, and \cref{sec:bounded-transfer} performs the bounded
covariance-preserving comparison.  Exact-fourth truncation and the distinction
between the two couplings are treated in \cref{sec:exact-l4}.
\Cref{sec:profiles} proves the finite-separable profile application, and
\cref{sec:consequences} records spectral consequences and limitations.

\section{The finite-type model and main results}
\label{sec:model-results}

We now specify the independence structure, the deterministic data, and the
covariance-matched limit.  Throughout, \(d,r,m\geq1\) are fixed.  Neither the
number of colors nor the number of endpoint types varies with \(n\).

\subsection{Endpoint types and unordered-pair atoms}

Let \(\theta:\N\to[r]:=\{1,\ldots,r\}\) be deterministic, and define
\begin{equation}
  P_{a,n}:=\sum_{\substack{1\leq i\leq n\\ \theta(i)=a}}E_{ii}
  \in M_n(\C),
  \qquad a\in[r].
  \label{eq:type-projections}
\end{equation}
We assume that
\begin{equation}
  \tr_n(P_{a,n})\longrightarrow\pi_a>0,
  \qquad a\in[r],
  \label{eq:type-proportions}
\end{equation}
where \(\tr_n=n^{-1}\Tr\).  In particular, \(\sum_a\pi_a=1\).

Write
\[
  S:\C^d\oplus\C^d\longrightarrow\C^d\oplus\C^d,
  \qquad S(z_+,z_-)=(z_-,z_+)
\]
for endpoint reversal.  For every ordered type pair \((a,b)\), let
\(\mu^{ab}\) be a probability law on \(\C^{2d}\) such that
\begin{equation}
  \int v\,d\mu^{ab}(v)=0,
  \qquad
  \max_{a,b\in[r]}\int\norm{v}_2^4\,d\mu^{ab}(v)<\infty,
  \qquad
  \mu^{ba}=S_\#\mu^{ab}.
  \label{eq:pair-law-assumptions}
\end{equation}
Consequently, \(\mu^{aa}\) is invariant under endpoint exchange.  This last
condition is substantive: it rules out an orientation selected by the
numerical order \(i<j\) inside a same-type block.

In the nested model, for every \(i<j\) let
\begin{equation}
  V_{ij}:=
  \bigl(\xi_{ij}^{(1)},\ldots,\xi_{ij}^{(d)},
        \xi_{ji}^{(1)},\ldots,\xi_{ji}^{(d)}\bigr)
  \in\C^{2d}.
  \label{eq:pair-atom}
\end{equation}
The family \((V_{ij})_{i<j}\) is independent, is sampled once on one
probability space, and satisfies
\begin{equation}
  V_{ij}\sim\mu^{\theta(i)\theta(j)}.
  \label{eq:typed-pair-law}
\end{equation}
Dependence among the \(2d\) coordinates of a fixed \(V_{ij}\) is otherwise
arbitrary.  The reversal relation in \eqref{eq:pair-law-assumptions} makes the
description independent of the choice of canonical endpoint orientation.

For the diagonal, put
\[
  W_i=(\xi_{ii}^{(1)},\ldots,\xi_{ii}^{(d)})\in\C^d.
\]
We assume that the \(W_i\)'s are mutually independent, independent of all
off-diagonal atoms, centered, and have type-dependent laws \(\nu^{\theta(i)}\)
satisfying
\begin{equation}
  \max_{a\in[r]}\int\norm{w}_2^2\,d\nu^a(w)<\infty.
  \label{eq:diagonal-l2}
\end{equation}
No fourth moment is imposed on the diagonal.  Indeed, for each fixed color,
\begin{equation}
  \norm{n^{-1/2}\operatorname{diag}
    (\xi_{11}^{(\kappa)},\ldots,\xi_{nn}^{(\kappa)})}
  \longrightarrow0
  \label{eq:diagonal-negligible-preview}
\end{equation}
almost surely in the nested model and in probability in the fixed-law
nonnested model; see \cref{sec:exact-l4}.  Thus the diagonal has no component
in the limiting family.

The normalized random matrices are
\begin{equation}
  X_{\kappa,n}:=\frac1{\sqrt n}
  (\xi_{ij}^{(\kappa)})_{1\leq i,j\leq n},
  \qquad \kappa\in[d],
  \label{eq:random-matrices}
\end{equation}
and we abbreviate
\(X_n=(X_{1,n},\ldots,X_{d,n})\) and
\(P_n=(P_{1,n},\ldots,P_{r,n})\).

\begin{remark}[Why the atom is realified]
  The covariance data of \(V_{ij}\) mean the covariance of
  \((\Rea V_{ij},\Ima V_{ij})\in\R^{4d}\).  This records both the usual
  complex covariance and the pseudocovariance.  No circularity or independence
  of real and imaginary parts is assumed.
\end{remark}

\subsection{Deterministic companions and fixed amplified tests}

Let
\[
  A_n=(A_{1,n},\ldots,A_{m,n})\in M_n(\C)^m
\]
be deterministic.  We assume that \((P_n,A_n)\) converges jointly strongly in
\(*\)-distribution to
\[
  (p,a)=(p_1,\ldots,p_r,a_1,\ldots,a_m)
\]
in a faithful tracial \(C^*\)-probability space \((B,\tau_B)\).  Explicitly,
for every scalar noncommutative \(*\)-polynomial \(R\),
\begin{align}
  \tr_n R(P_n,A_n)&\longrightarrow\tau_B R(p,a),
  \label{eq:deterministic-trace-strong}\\
  \norm{R(P_n,A_n)}&\longrightarrow\norm{R(p,a)}.
  \label{eq:deterministic-norm-strong}
\end{align}
The limiting elements \(p_1,\ldots,p_r\) are pairwise orthogonal projections
with sum one and \(\tau_B(p_a)=\pi_a\).  The word ``jointly'' is essential in
\eqref{eq:deterministic-trace-strong}--\eqref{eq:deterministic-norm-strong};
separate strong limits of \(P_n\) and \(A_n\) do not determine mixed words.

For \(q\geq1\), write \(\tr_q=q^{-1}\Tr\).  An
\(M_q(\C)\)-valued \(*\)-polynomial is a finite sum
\(Q=\sum_j C_j\otimes R_j\), with \(C_j\in M_q(\C)\) and scalar
\(*\)-polynomials \(R_j\).  Its finite and limiting evaluations belong to
\(M_q\otimes M_n\) and \(M_q\otimes B\), respectively.  Every occurrence of
``fixed amplification'' below means that \(q\), \(Q\), its degree, and all
coefficients are chosen independently of \(n\).

\subsection{The covariance-matched free family}
\label{subsec:explicit-limit}

We next construct the limit \(x=(x_1,\ldots,x_d)\).  All covariance matrices
and Gaussian factorizations in this subsection are over real vector spaces.

\paragraph{Cross-type blocks.}
Fix \(a<b\).  Let
\(\Gamma^{ab}\in M_{4d}(\R)\) be the covariance matrix of the realification
of \(V\sim\mu^{ab}\).  Choose a real Gaussian factorization of its centered
Gaussian law,
\begin{equation}
  (Z^{ab}_{+,\kappa},Z^{ab}_{-,\kappa})_{\kappa\leq d}
  =\sum_{\ell=1}^{L_{ab}}
  (\alpha^{ab}_{\kappa\ell},
   \beta^{ab}_{\kappa\ell})_{\kappa\leq d}\,g^{ab}_\ell,
  \qquad L_{ab}\leq4d,
  \label{eq:cross-real-factorization}
\end{equation}
where the \(g^{ab}_\ell\)'s are independent real \(N(0,1)\) variables and
the displayed coefficients may be complex.  Equivalently, one may take the
principal square root of \(\Gamma^{ab}\) and delete zero columns.

On a reduced free-product extension of \((B,\tau_B)\), choose a standard
circular family \((c^{ab}_\ell)_{\ell\leq L_{ab}}\), free from \(B\).
We continue to denote the free-product trace, which extends the original
trace on \(B\), by \(\tau_B\).
Here ``standard'' means
\(\tau(c^{ab}_\ell(c^{ab}_\ell)^*)=1\).  Source families corresponding to
different unordered type pairs are chosen freely.  Set
\begin{equation}
  x^{ab}_\kappa
  :=\sum_{\ell=1}^{L_{ab}}
  \left(
    \alpha^{ab}_{\kappa\ell}
      p_a c^{ab}_\ell p_b
    +\beta^{ab}_{\kappa\ell}
      p_b(c^{ab}_\ell)^*p_a
  \right).
  \label{eq:cross-free-limit}
\end{equation}
The two terms use the same circular source; replacing one by an independent
copy would lose the orientation covariance.

\paragraph{Same-type blocks.}
Fix \(a\in[r]\), and let
\((Z^a_+,Z^a_-)\in\C^d\oplus\C^d\) be the centered real Gaussian vector
whose full real covariance equals that of \(\mu^{aa}\).  Define
\begin{equation}
  U^a=\frac{Z^a_++Z^a_-}{\sqrt2},
  \qquad
  V^a=\frac{Z^a_+-Z^a_-}{\sqrt2}.
  \label{eq:symmetric-antisymmetric-gaussians}
\end{equation}
Endpoint exchangeability sends \((U^a,V^a)\) to \((U^a,-V^a)\).  Hence
their complete real cross-covariance vanishes; because they are jointly real
Gaussian, \(U^a\) and \(V^a\) are independent.  Choose factorizations
\begin{equation}
  U^a_\kappa=\sum_{\ell=1}^{L_{a,+}}
       \alpha^a_{\kappa\ell}g^a_{+,\ell},
  \qquad
  V^a_\kappa=\sum_{\ell=1}^{L_{a,-}}
       \beta^a_{\kappa\ell}g^a_{-,\ell},
  \label{eq:same-real-factorization}
\end{equation}
where \(L_{a,+},L_{a,-}\leq2d\) and the two standard real Gaussian families
are independent.

Let \((s^a_{+,\ell})\) and \((s^a_{-,\ell})\) be standard semicircular
families, all mutually free, free from \(B\), and free from every cross-type
source.  Thus \(\tau((s^a_{\pm,\ell})^2)=1\).  With the skew-sector sign
convention fixed in \cref{sec:gaussian-comparator}, set
\begin{equation}
  x^{aa}_\kappa
  :=\frac1{\sqrt2}\,p_a
  \left(
    \sum_{\ell=1}^{L_{a,+}}
      \alpha^a_{\kappa\ell}s^a_{+,\ell}
    -i\sum_{\ell=1}^{L_{a,-}}
      \beta^a_{\kappa\ell}s^a_{-,\ell}
  \right)p_a.
  \label{eq:same-free-limit}
\end{equation}
The outer factor \(2^{-1/2}\) is compulsory: it inverts the change of
coordinates in \eqref{eq:symmetric-antisymmetric-gaussians}.  The independent
source families in \eqref{eq:same-free-limit} are a realization of the
Gaussian covariance decomposition, not an independence assumption on the
coordinates of the original atom.

Finally, define
\begin{equation}
  x_\kappa:=\sum_{a=1}^r x^{aa}_\kappa
       +\sum_{1\leq a<b\leq r}x^{ab}_\kappa,
  \qquad \kappa\in[d].
  \label{eq:total-free-limit}
\end{equation}
The projections in these formulas already encode the type proportions;
there is no additional factor \(\pi_a^{1/2}\) in
\eqref{eq:cross-free-limit} or \eqref{eq:same-free-limit}.

\begin{proposition}[Factorization invariance and Gaussian identification]
  \label{prop:limit-identification}
  The joint \(*\)-moments of \((p,a,x)\), as well as all scalar and
  matrix-amplified polynomial norms, are independent of the real covariance
  factorizations in \eqref{eq:cross-real-factorization} and
  \eqref{eq:same-real-factorization}.  Moreover, the covariance-matched
  finite-type Gaussian matrix tuple, adjoined to \((P_n,A_n)\), converges
  jointly strongly almost surely, on a single event, to \((p,a,x)\).  Any type-dependent Gaussian diagonal
  convention changes the GOE-polynomial realization by a diagonal matrix of
  norm \(o(1)\) almost surely and hence does not change the limit.
\end{proposition}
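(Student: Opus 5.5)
Factorization invariance will be handled at the level of the free source, and the Gaussian identification by a GOE doubling combined with the Fan--Sun--Wang companion theorem. We argue in three steps.

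\emph{Step 1: factorization invariance.} Fix \(a<b\). Two real factorizations \eqref{eq:cross-real-factorization} of the same Gaussian law \(N(0,\Gamma^{ab})\), padded by zero columns to common length \(4d\), have real coefficient matrices (after realification) \(F,F'\in M_{4d\times 4d}(\R)\) with \(FF^{\transpose}=F'F'^{\transpose}=\Gamma^{ab}\). Hence \(F'=FO\) for some real orthogonal \(O\). Write each standard circular element as \(c_\ell=(s_\ell+i t_\ell)/\sqrt2\) with a free standard semicircular family \((s_\ell,t_\ell)\). The expressions \eqref{eq:cross-free-limit} are real-linear in the source vector \((c_\ell)\), so replacing \(F\) by \(FO\) amounts to replacing \((c_\ell)\) by \((\sum_k O_{\ell k}c_k)_\ell\). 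A free semicircular family is invariant in joint distribution under real orthogonal rotations, and this rotation acts on the pairs \((s,t)\) diagonally. Consequently the rotated sources are again a standard circular family, free from \(B\) and from all other sources, so the joint \(*\)-distribution of \((p,a,x)\) is unchanged.

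The same argument applies to each of the independent sectors \(U^a\) and \(V^a\) in \eqref{eq:same-real-factorization}. Since \(U^a\) and \(V^a\) are independent, their factorizations can be rotated separately. There is one subtlety: the realification of \(U^a\) has covariance determined by complex coefficients \(\alpha^a\) multiplying real \(g\)'s. Two factorizations then differ by a real orthogonal matrix acting on \(\ell\), and free semicircular families are invariant under such matrices.

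Equal joint \(*\)-moments in faithful tracial \(C^*\)-algebras give equal norms, via \(\|y\|=\lim_k\tau((y^*y)^k)^{1/2k}\). This applies equally in \(M_q\otimes B\) with \(\tr_q\otimes\tau_B\), which is also faithful.

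\emph{Step 2: Gaussian matrix realization.} We realize the Gaussian matrices using the same factorizations. For each source \(\ell\) of pair \(\{a,b\}\) take an independent real Ginibre matrix \(G_\ell\). The cross block is \(n^{-1/2}\sum_\ell(\alpha_{\kappa\ell}P_aG_\ell P_b+\beta_{\kappa\ell}P_bG_\ell^{\transpose}P_a)\), which reproduces the covariance of \(V_{ij}\) for \(\theta(i)=a,\theta(j)=b\). For same type \(a\), take independent GOE matrices \(H_{+,\ell}\) and real antisymmetric Gaussian matrices \(K_{-,\ell}\). Their off-diagonal entries \((h_{ij},h_{ji})=(g,g)\) and \((k_{ij},k_{ji})=(g,-g)\) reproduce \((Z_+,Z_-)=((U+V)/\sqrt2,(U-V)/\sqrt2)\), which accounts for the factor \(2^{-1/2}\).

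Next we express each \(G_\ell\) and \(K_\ell\) through GOE matrices. If \(H\) is a \(2n\)-dimensional GOE, its off-diagonal \(n\times n\) corner is a real Ginibre matrix, up to normalization. Moreover \(K=E_{12}^{*}(\ldots)\) gives \(iK\) as \((G-G^{\transpose})/\sqrt2\)-type combinations built from independent Ginibre corners. So every Gaussian matrix is a fixed \(*\)-polynomial in independent \(2n\)-dimensional GOEs together with the lifted deterministic tuple \((P_n\otimes1, A_n\otimes 1, E_{ij}\otimes 1_n)\). This lifted tuple converges jointly strongly, since \(M_2(\C)\otimes\) preserves strong convergence.

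Fan--Sun--Wang then gives almost-sure joint strong convergence on one event, because there are countably many polynomials with rational coefficients and the convergence extends by density. The limit consists of free semicircular elements together with \(M_2(B)\). Taking the same corners and polynomials in the limit yields circular elements \(p_ac p_b\) and semicircular elements \(-i\cdot(\text{skew})\) with the stated sign convention. The compression \(e_{11}(\cdot)e_{11}\) with normalized trace turns strong convergence of the \(2n\) model into strong convergence of the corner algebra, and the corner trace is faithful.

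Matrix amplification at level \(q\) follows from the complete-isometry result of \cref{sec:deterministic-tools}, applied to the joint strong limit.

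\emph{Step 3: diagonal corrections.} Changing the Gaussian diagonal convention adds \(n^{-1/2}D_n\), where \(D_n\) is diagonal with Gaussian entries. Its norm is \(O(\sqrt{\log n/n})\) almost surely by Borel--Cantelli, so polynomial norms and traces change by \(o(1)\).

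\emph{Main obstacle.} The step we expect to be hardest is the careful bookkeeping that the same-type symmetric and antisymmetric sectors and their signs map exactly onto \eqref{eq:same-free-limit}, including the correct normalization of the corners and of the \(2n\)-dimensional trace. We would verify this by comparing second-order \(*\)-covariances, using the semicircular distribution of the limit of \(iK/\sqrt n\).
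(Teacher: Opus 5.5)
Your proposal is correct and follows essentially the same route as the paper. Factorization invariance comes from orthogonal rotation of the free circular/semicircular sources plus faithfulness of the trace. The Gaussian identification uses doubled-GOE corners with the Fan--Sun--Wang theorem applied to the level-2 lifted deterministic tuple, followed by compression to identify the circular and semicircular limits, \cref{lem:fixed-amplification} for fixed amplification, and Borel--Cantelli for the diagonal correction. For the corner identification you flag as the main obstacle, matching second-order covariances suffices only because the compressed sources are operator-valued circular/semicircular over $e(\M_2\otimes B)e$, with all higher cumulants vanishing since $s_\ell$ is free from $\M_2\otimes B$; this is how the paper obtains both circularity and freeness from $B$.
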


The proposition is proved in \cref{sec:gaussian-comparator}.  Briefly,
orthogonal changes of real Gaussian coordinates rotate free Gaussian source
families, while a doubled GOE realization and compression produce the
circular and semicircular variables above.  Notice that the sources are free
from \(B\); the compressed tuple \(x\) itself is generally not scalar-free
from \(B\) because of the type projections.

\subsection{Main strong-universality theorems}

We first state the result for one infinite array.  The notation
\(Q(P_n,A_n,X_n)\) includes adjoints of every displayed variable whenever
they occur in \(Q\).

\begin{theorem}[Nested infinite-array universality]
  \label{thm:nested-main}
  Assume \eqref{eq:type-proportions}--\eqref{eq:diagonal-l2} and the joint
  deterministic strong convergence
  \eqref{eq:deterministic-trace-strong}--\eqref{eq:deterministic-norm-strong}.
  Suppose that all sizes are the upper-left corners of the single infinite
  array described in \eqref{eq:pair-atom}--\eqref{eq:typed-pair-law}, with the
  diagonal variables sampled once as well.  Then there exists an event
  \(\Omega_0\) of probability one such that, on \(\Omega_0\), for every
  fixed \(q\geq1\) and every fixed \(M_q(\C)\)-valued noncommutative
  \(*\)-polynomial \(Q\),
  \begin{align}
    (\tr_q\otimes\tr_n)Q(P_n,A_n,X_n)
      &\longrightarrow
    (\tr_q\otimes\tau_B)Q(p,a,x),
    \label{eq:nested-trace-convergence}\\
    \norm{Q(P_n,A_n,X_n)}
      &\longrightarrow
    \norm{Q(p,a,x)}.
    \label{eq:nested-norm-convergence}
  \end{align}
  The limit \(x\) is the covariance-matched family
  \eqref{eq:cross-free-limit}--\eqref{eq:total-free-limit}.
\end{theorem}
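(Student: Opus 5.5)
The proof follows the five-stage architecture described in the introduction, taking \cref{prop:limit-identification} as the Gaussian endpoint.

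\textbf{Reduction to pencils.} First reduce \eqref{eq:nested-trace-convergence}--\eqref{eq:nested-norm-convergence} to statements about fixed self-adjoint affine pencils
\[
  L_n=C_0\otimes 1+\sum_j C_j\otimes Y_{j,n}+\sum_k D_k\otimes A_{k,n}+\sum_a E_a\otimes P_{a,n},
\]
where the \(Y_{j,n}\) run over the real and imaginary parts of the \(X_{\kappa,n}\). By the companion-augmented linearization trick, the norm of a fixed \(M_q(\C)\)-valued \(*\)-polynomial is determined by the spectra of such pencils, including spectral parameters. Trace convergence follows from moment convergence of the same pencils together with polarization. It therefore suffices to show the following: on one almost-sure event, for every pencil with rational coefficients, the spectrum of \(L_n\) converges in Hausdorff distance to that of the limiting pencil over \((p,a,x)\), and the moments converge. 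A countable dense family of rational pencils is enough because pencil norms are Lipschitz in the coefficients. The fixed-\(q\) amplification is then obtained from the complete isometry of the induced embedding into the norm ultraproduct.

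\textbf{Bounded comparison.} Next treat truncated atoms. Take centered radial truncations \(V_{ij}\mathbf 1\{\norm{V_{ij}}\le K\}\) minus their means. This keeps \(\mu^{ba}=S_\#\mu^{ab}\) and endpoint exchangeability intact. Group each unordered pair \(\{i,j\}\) into one centered self-adjoint summand of \(L_n\), and apply the Brailovskaya--van Handel spectral and moment comparison with \(R=O(Kn^{-1/2})\), \(\sigma=O(1)\) and \(\sigma_*=O(n^{-1/2})\). This compares the truncated model with its covariance-matched Gaussian model. \Cref{prop:limit-identification} identifies the Gaussian limit as the family \(x^{(K)}\) built from the truncated covariances. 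The comparison bounds are of the form \(n^{-c}\) plus concentration tails, and they are summable in \(n\), so Borel--Cantelli gives the almost-sure statement for each rational pencil and each rational \(K\).

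\textbf{Exact-fourth truncation, the main obstacle.} Show that, almost surely,
\[
  \limsup_n\norm{X_{\kappa,n}-X^{(K)}_{\kappa,n}}\le\varepsilon(K)\to0 .
\]
Split the residual into directed type blocks \(P_aRP_b\). For \(a\ne b\), embed the block into a rectangular completion by auxiliary nested i.i.d.\ entries and apply the Bai--Yin norm bound separately to the real and imaginary parts. This needs only the fourth moment of the residual marginal, which tends to \(0\) as \(K\to\infty\), and it uses no coordinate independence inside an atom. For \(a=b\), write the residual as \(\tfrac12(R+R^{\transpose})+\tfrac12(R-R^{\transpose})\). Exchangeability makes each sector a symmetric or skew-symmetric Wigner-type matrix with i.i.d.\ upper entries. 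Anderson's fourth-moment theorem, used for both signs, bounds these sectors almost surely by \(C\cdot(\text{residual }L^2\text{ norm})\). The diagonal is handled by \eqref{eq:diagonal-negligible-preview}: it uses only a second moment and Borel--Cantelli on \(\max_i\abs{\xi_{ii}}/\sqrt n\).

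\textbf{Removing the truncation in the limit.} Finally, couple \(x^{(K)}\) and \(x\) on a common free Gaussian source via principal square roots of the covariances. Then \(\norm{x^{(K)}_\kappa-x_\kappa}\to0\), because the residual covariances vanish and the norm of a free Gaussian combination is controlled by its covariance. An \(\varepsilon/3\) argument, taken over rational \(K\to\infty\), then yields the result on a single event \(\Omega_0\).

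I expect the main obstacle to be this exact-\(L^4\) tail step. The difficulty is making the block completions nested, so that a single almost-sure event serves all \(n\), while also handling correlated pair vectors in the same-type sectors.
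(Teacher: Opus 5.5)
Your proposal is correct and follows essentially the same route as the paper. The shared steps are the companion-augmented pencil criterion with ultraproduct amplification, and the Brailovskaya--van Handel comparison with one self-adjoint summand per unordered pair. For the residuals, you likewise use Bai--Yin on separately completed directed cross-type blocks and Anderson on the symmetric/skew same-type sectors. The remaining steps also match: an \(L^2\) Borel--Cantelli argument for the diagonal and continuity of principal covariance square roots for \(x^{(K)}\to x\).

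One small correction: the i.i.d.\ upper-triangular structure of the two same-type residual sectors comes from independence across unordered pairs with the common law \(\mu^{aa}\), not from exchangeability. The paper needs exchangeability elsewhere, to split the Gaussian comparator into independent symmetric and antisymmetric sectors. Also, the complex symmetric and skew sectors must first be split into real and imaginary parts before Anderson's real theorem applies.
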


The common event in \cref{thm:nested-main} is independent of \((q,Q)\).
This statement for all complex coefficients follows from a countable dense
family of rational-coefficient pencils, uniform boundedness on that event,
and polynomial continuity.  It does not provide estimates uniform over the
tests.

We next formulate the coupling-invariant fixed-law statement.  For every
\(n\), let
\[
  V_{ij}^{(n)}=
  (\xi_{ij}^{(1,n)},\ldots,\xi_{ij}^{(d,n)},
   \xi_{ji}^{(1,n)},\ldots,\xi_{ji}^{(d,n)}),
  \qquad 1\leq i<j\leq n.
\]
Within row \(n\), assume that these atoms are independent and that
\(V_{ij}^{(n)}\sim\mu^{\theta(i)\theta(j)}\).  The diagonal vectors within
that row are mutually independent, independent of the off-diagonal atoms,
and have laws \(\nu^{\theta(i)}\).  No condition is imposed on the joint
coupling of different rows.

\begin{theorem}[Fixed-law nonnested triangular arrays]
  \label{thm:triangular-main}
  Under the preceding rowwise assumptions and the same deterministic
  hypotheses as in \cref{thm:nested-main}, define
  \(X_{\kappa,n}=n^{-1/2}(\xi_{ij}^{(\kappa,n)})_{i,j\leq n}\).
  For every fixed \(q\geq1\) and every fixed
  \(M_q(\C)\)-valued noncommutative \(*\)-polynomial \(Q\),
  \begin{align}
    (\tr_q\otimes\tr_n)Q(P_n,A_n,X_n)
      &\longrightarrow
    (\tr_q\otimes\tau_B)Q(p,a,x)
      &&\text{in probability},
    \label{eq:triangular-trace-convergence}\\
    \norm{Q(P_n,A_n,X_n)}
      &\longrightarrow
    \norm{Q(p,a,x)}
      &&\text{in probability}.
    \label{eq:triangular-norm-convergence}
  \end{align}
  Here \(x\) is again the explicit covariance-matched family
  \eqref{eq:cross-free-limit}--\eqref{eq:total-free-limit}.
\end{theorem}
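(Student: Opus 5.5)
The plan is to follow the same comparison--linearization--truncation architecture as for \cref{thm:nested-main}, but to keep every estimate at the level of a single row $n$. This way the coupling between rows never enters. Since convergence in probability for a fixed test $(q,Q)$ is a statement about the law of row $n$ alone, it suffices to fix $(q,Q)$ and $\varepsilon>0$. The goal is then to show that the probability of an $\varepsilon$-deviation in \eqref{eq:triangular-trace-convergence}--\eqref{eq:triangular-norm-convergence} tends to zero.

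\textbf{Step 1: linearization.} First, by the companion-augmented linearization, reduce the norm and trace statements for $Q(P_n,A_n,X_n)$ to spectral and moment statements for finitely many fixed self-adjoint affine pencils
\[
  L_n=C_0\otimes 1+\sum_j C_j\otimes R_j(P_n,A_n)+\sum_\kappa\bigl(D_\kappa\otimes X_{\kappa,n}+D_\kappa^*\otimes X_{\kappa,n}^*\bigr).
\]
Concretely, I will use Hausdorff convergence of $\Spec(L_n)$ to $\Spec(L)$ in probability, together with convergence of $\tr\, h(L_n)$ for polynomial $h$. Polarization and faithfulness of $\tau_B$ then upgrade this back to the polynomial statements exactly as in the nested case. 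Only finitely many pencils are needed for one fixed test, so no countable dense family or common event is required.

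\textbf{Step 2: diagonal and truncation.} Next, remove the diagonal: by \eqref{eq:diagonal-negligible-preview} in its in-probability form, $\max_i|\xi^{(\kappa,n)}_{ii}|/\sqrt n\to0$ in probability, which follows from a union bound and $L^2$ uniform integrability of a fixed law. Then truncate each atom radially at level $\delta_n\sqrt n$ and recenter, with $\delta_n\downarrow0$ slowly; this preserves reversal consistency. For fixed law $\mu^{ab}$, finite fourth moment gives
\[
  n^2\,\mathbb P\bigl(\|V\|>\delta_n\sqrt n\bigr)\le \delta_n^{-4}\,\E\bigl[\|V\|^4;\ \|V\|>\delta_n\sqrt n\bigr]\to0
\]
for suitable $\delta_n$. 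Hence, with probability tending to one, row $n$ agrees with its truncation, up to the recentering shift, which has norm $o(1)$ deterministically.

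The intermediate residual between truncation at a fixed level $K$ and at level $\delta_n\sqrt n$ is handled with the rowwise bounds. For cross-type blocks, embed the block in a rectangular iid completion and apply Bai--Yin separately to real and imaginary parts. For same-type blocks, split into symmetric and skew sectors and apply Anderson's bound. Both are needed only in probability, in the form $\limsup\|\text{residual}\|\le \eta(K)$ with $\eta(K)\to0$ as $K\to\infty$. Because these bounds are statements about the law of a single row, they are coupling-invariant.

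\textbf{Step 3: bounded comparison.} For the bounded model at level $K$, apply the Brailovskaya--van Handel comparison with $R=O(n^{-1/2})$, $\sigma=O(1)$ and $\sigma_*=O(n^{-1/2})$. This controls $\Spec(L_n)$ versus the Gaussian pencil with failure probability $o(1)$, and the moments in expectation, with concentration giving in-probability control. \Cref{prop:limit-identification} then supplies the Gaussian limit for the truncated covariance. Finally, couple the truncated free limits to $x$ through principal square roots of covariances, so that the discrepancy in norm is $\to0$ as $K\to\infty$, and let $K\to\infty$.

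\textbf{Main obstacle.} The hard part is the exact-$L^4$ tail control in Step 2. Both Bai--Yin and Anderson are classically stated almost surely for one infinite array, so they must be rederived or transferred. I would apply them to an auxiliary nested array with the same law and note that norm convergence in probability depends only on the row-$n$ law, which suffices. A secondary difficulty is uniformity in $K$ of the comparison constants, which I would handle by a diagonal choice $K=K_n\to\infty$ slowly.
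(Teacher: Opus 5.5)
Your proposal is correct and follows essentially the paper's argument. The shared steps are: rowwise Brailovskaya--van Handel comparison at a fixed truncation level $K$; Bai--Yin/Anderson residual bounds transferred from reference nested arrays whose size-$n$ corner has the same law as row $n$; union-bound removal of the $L^2$ diagonal; square-root coupling of $x^{(K)}$ to $x$; and letting $n\to\infty$ before $K\to\infty$. Your reduction to finitely many pencils for a fixed test is a legitimate substitute for the paper's subsequence argument with \cref{prop:affine-pencil-criterion}, provided the spectral parameter of the linearization is handled by a finite net.

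Three points should be tightened:
\begin{enumerate}
\item The extra truncation at $\delta_n\sqrt n$ is unnecessary. Apply the reference-array device directly to the fixed-law residual $V-V^{(K)}$. The residual between the $\delta_n\sqrt n$- and $K$-truncations has an $n$-dependent law, so the fixed-law Bai--Yin and Anderson theorems do not apply to it as you phrased it.
\item The diagonal choice $K=K_n$ is also unnecessary, because the iterated limit needs no uniformity in $K$.
\item State explicitly that $\sup_n a_{n,p}<\infty$ (from the Gaussian pencil moments and the expected-moment comparison) before you invoke the concentration bound.
\end{enumerate}
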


Theorems \ref{thm:nested-main} and \ref{thm:triangular-main} differ only in
their coupling and mode of convergence, not in their moment assumptions or
their limit.  In particular, resampling from the same fixed finite family of
laws does not require a uniform-integrability assumption beyond
\eqref{eq:pair-law-assumptions} and \eqref{eq:diagonal-l2}.

\begin{remark}[Special cases]
  If the deterministic base is only
  \(D=\operatorname{span}\{p_1,\ldots,p_r\}\cong\C^r\), the self-adjoint
  coordinates of \(x\) form the corresponding \(D\)-valued free Gaussian
  system; its covariance maps are read directly from the type-pair second
  moments and the corner traces \(\tau_B(p_a)=\pi_a\).  When \(r=1\), the
  projections disappear and the limit is the scalar free Gaussian family
  determined by the color/orientation covariance, free from the companion
  limit.  For \(r>1\), scalar freeness of \(x\) from \(B\) is not asserted.
\end{remark}

\subsection{Finite-separable profile application: preview}

The formal profile theorem is stated and proved in \cref{sec:profiles}; we
record its scope here to make clear what follows from the abstract result.
Take \(r=1\) and
\[
  T_n=\operatorname{diag}(1/n,2/n,\ldots,n/n),
\]
which converges strongly to the coordinate function \(t\) in
\(C([0,1])\) equipped with Lebesgue trace.  For finitely many colors,
outputs, and summands, let
\begin{equation}
\begin{split}
  Z_{\rho,n}
  ={}&\sum_{\kappa,\ell}
       a_{\rho\kappa\ell}(T_n)X_{\kappa,n}
       b_{\rho\kappa\ell}(T_n)\\
    &+\sum_{\kappa,\ell}
       \widetilde a_{\rho\kappa\ell}(T_n)
       X_{\kappa,n}^{\transpose}
       \widetilde b_{\rho\kappa\ell}(T_n),
  \label{eq:profile-preview}
\end{split}
\end{equation}
where every scalar mask is continuous on \([0,1]\), and every sum has rank
fixed independently of \(n\).  Apply the main theorem to the enlarged color
tuple
\((X_{1,n},\ldots,X_{d,n},X_{1,n}^{\transpose},\ldots,
X_{d,n}^{\transpose})\).  Its pair atom is a singular deterministic image in
\(\R^{8d}\) of the original \(\R^{4d}\) atom, so the transpose coordinate
uses the same free sources with the endpoint-reversal sign, not an independent
copy.  Polynomial approximation of the continuous masks then gives the same
convergence mode as the applicable main theorem.  Extra deterministic
companions are permitted only when they converge jointly strongly with
\(T_n\).  This argument applies to the fixed finite-separable form
\eqref{eq:profile-preview}; uniform density alone does not extend it to an
arbitrary covariance kernel without an additional operator-norm estimate.

\section{Deterministic companions, amplification, and linearization}
\label{sec:deterministic-tools}

This section isolates the deterministic operator-algebraic steps that turn
control of self-adjoint affine pencils into joint strong convergence.  These
steps do not depend on the distribution of the random entries.  In
particular, they are logically separate from the covariance comparison used
in Section~\ref{sec:bounded-transfer}.

\subsection{Self-adjoint coordinates and companion-augmented pencils}
\label{subsec:self-adjoint-pencils}

Let \((\mathcal A,\tau)\) be a tracial \(C^*\)-probability space with
faithful trace.  For \(q\geq 1\), write
\[
 \tau_q=\tr_q\otimes\tau,
 \qquad \tr_q=q^{-1}\Tr.
\]
A tuple \(Y_n=(Y_{1,n},\ldots,Y_{s,n})\) in tracial matrix
algebras \((\M_{N_n}(\C),\tr_{N_n})\) converges \emph{strongly in joint
\(*\)-distribution} to \(y=(y_1,\ldots,y_s)\in\mathcal A^s\) if, for every
scalar noncommutative \(*\)-polynomial \(P\),
\begin{equation}
 \tr_{N_n}P(Y_n)\longrightarrow\tau(P(y)),
 \qquad
 \norm{P(Y_n)}\longrightarrow\norm{P(y)}.
 \label{eq:def-strong-convergence}
\end{equation}
For a non-self-adjoint coordinate \(z\), we use the self-adjoint coordinates
\[
 \Rea z=\frac{z+z^*}{2},
 \qquad
 \Ima z=\frac{z-z^*}{2i}.
\]
Thus no generality is lost by formulating the intermediate arguments for
self-adjoint tuples.

Fix a matrix-coefficient size \(q\).  After this replacement, every affine
pencil needed below has the form
\begin{equation}
 \cL_n
 =C_0\otimes I_{N_n}
  +\sum_{j=1}^{s_0}C_j\otimes D_{j,n}
  +\sum_{k=1}^{s_1}B_k\otimes H_{k,n},
 \qquad C_j,B_k\in\M_q(\C)_{\rm sa}.
 \label{eq:companion-augmented-pencil}
\end{equation}
Here \(D_n\) consists of the self-adjoint coordinates of the type projections
and deterministic companions, while \(H_n\) consists of the random
self-adjoint coordinates.  After the norm-negligible random diagonal has been
removed, the last sum in the present model can be grouped by unordered pairs
as
\begin{equation}
 \cL_n=\cL_n^{(0)}+\sum_{1\leq i<j\leq n}Z_{ij,n},
 \qquad Z_{ij,n}=Z_{ij,n}^*,\quad \E Z_{ij,n}=0,
 \label{eq:pencil-pair-decomposition}
\end{equation}
where the matrices \((Z_{ij,n})_{i<j}\) are independent.  The deterministic
matrix \(\cL_n^{(0)}\) contains the first two terms of
\eqref{eq:companion-augmented-pencil} and any other deterministic shift.

This separation explains precisely how companions enter a covariance
comparison.  For example, the three random-summand parameters
\begin{align}
 R(\cL_n)&=\max_{i<j}\norm{Z_{ij,n}}_{L^\infty},
 \label{eq:pencil-R}\\
 \sigma(\cL_n)^2&=\norm{\sum_{i<j}\E Z_{ij,n}^2},
 \label{eq:pencil-sigma}\\
 \sigma_*(\cL_n)^2
 &=\sup_{\norm{u}=\norm{v}=1}
   \sum_{i<j}\E\abs{\ip{u}{Z_{ij,n}v}}^2
 \label{eq:pencil-sigma-star}
\end{align}
do not involve \(\cL_n^{(0)}\).  Consequently, adjoining deterministic
companions changes neither the covariance-matching requirement nor these
parameters.  It does, however, change the deterministic target of the pencil.
This is why the hypotheses of our main results require \emph{joint} strong
convergence of the type projections and companions; separate strong
convergence of two deterministic tuples would not identify the norms of
mixed polynomials.

For later use, if \(Q\in\M_q(\C\langle z,z^*\rangle)\) is not self-adjoint,
then
\begin{equation}
 \mathfrak h(Q)=
 \begin{pmatrix}0&Q\\Q^*&0\end{pmatrix}
 \quad\text{satisfies}\quad
 \mathfrak h(Q)=\mathfrak h(Q)^*,
 \qquad \norm{\mathfrak h(Q)}=\norm{Q}.
 \label{eq:hermitization-polynomial}
\end{equation}
Hence self-adjoint matrix-valued polynomial estimates suffice for arbitrary
polynomial norms, at the cost of replacing \(q\) by the still fixed size
\(2q\).

\subsection{Fixed matrix amplification}
\label{subsec:fixed-amplification}

The Gaussian strong-convergence theorem used in
Section~\ref{sec:gaussian-comparator} is a scalar-level statement.  The next
lemma, rather than that theorem, supplies matrix-valued coefficients.

\begin{lemma}[Fixed-amplification lemma]
\label{lem:fixed-amplification}
Suppose that \(Y_n\) converges strongly in joint \(*\)-distribution to
\(y\) as in \eqref{eq:def-strong-convergence}.  Then, for each separately
fixed \(q\geq1\) and every
\(Q\in\M_q(\C\langle z,z^*\rangle)\),
\begin{align}
 \norm{Q(Y_n)}&\longrightarrow\norm{Q(y)},
 \label{eq:amplified-norm-convergence}\\
 (\tr_q\otimes\tr_{N_n})Q(Y_n)
 &\longrightarrow(\tr_q\otimes\tau)Q(y).
 \label{eq:amplified-trace-convergence}
\end{align}
The assertion is pointwise in \(q\).  It gives no estimate uniform over
matrix levels and no conclusion when \(q=q_n\to\infty\).
\end{lemma}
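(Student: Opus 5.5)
The trace statement \eqref{eq:amplified-trace-convergence} is immediate, so I would dispose of it first. Write \(Q=\sum_j C_j\otimes R_j\) with finitely many \(C_j\in\M_q(\C)\) and scalar \(*\)-polynomials \(R_j\). Then
\[
(\tr_q\otimes\tr_{N_n})Q(Y_n)=\sum_j\tr_q(C_j)\,\tr_{N_n}R_j(Y_n),
\]
and each scalar trace converges by \eqref{eq:def-strong-convergence}. The sum is finite and fixed, so the limit is \((\tr_q\otimes\tau)Q(y)\).

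For the norm, I would first reduce to positive elements. Since \(\norm{Q}^2=\norm{Q^*Q}\) and \(Q^*Q\in\M_q(\C\langle z,z^*\rangle)\) is again a fixed matrix polynomial, it suffices to treat \(Q\geq0\) in both \(\M_q\otimes\M_{N_n}\) and \(\M_q\otimes\mathcal A\). For such \(Q\) and any integer \(k\), the matrix trace of \(Q^{k}\) is controlled by the traces just discussed, but \(L^{2k}\) norms only give the lower bound
\[
\liminf_n\norm{Q(Y_n)}\geq\norm{Q(y)}.
\]
This inequality follows from the trace convergence applied to \(Q^{k}\), together with faithfulness of \(\tr_q\otimes\tau\), which gives \(\lim_k\bigl((\tr_q\otimes\tau)(Q(y)^{2k})\bigr)^{1/2k}=\norm{Q(y)}\). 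The upper bound needs a different argument, and that is the main step.

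For the upper bound, I would use the operator-algebraic reformulation. Let \(\mathcal A_0=C^*(y)\). Strong convergence says exactly that the map \(P(y)\mapsto(P(Y_n))_n\) extends to an isometric \(*\)-homomorphism
\[
\iota:\mathcal A_0\to\prod_n\M_{N_n}\Big/\bigoplus_n\M_{N_n},
\]
the quotient being the norm ultraproduct or asymptotic sequence algebra. Well-definedness and isometry on the dense \(*\)-algebra of polynomials are precisely the norm half of \eqref{eq:def-strong-convergence}, applied along subsequences: for each polynomial, \(\limsup_n\norm{P(Y_n)}=\norm{P(y)}\). Any injective \(*\)-homomorphism between \(C^*\)-algebras is completely isometric, because its amplification \(\mathrm{id}_q\otimes\iota\) is again an injective \(*\)-homomorphism and hence isometric. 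Applying this to \(Q(y)\in\M_q(\mathcal A_0)\) gives
\[
\limsup_n\norm{Q(Y_n)}=\norm{Q(y)},
\]
using the identification \(\M_q\bigl(\prod/\bigoplus\bigr)=\prod\M_q(\M_{N_n})/\bigoplus\). Running the same argument along an arbitrary subsequence turns this \(\limsup\) into a full limit. The injectivity of \(\iota\) on the whole of \(\mathcal A_0\), rather than only on polynomials, is where the faithfulness and the scalar norm convergence enter. Because the injection is a genuine \(*\)-homomorphism, the amplified norms come for free once injectivity is established. That verification is the only point I expect to need care.

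Finally, every constant depends on the fixed \(q\) and \(Q\). Nothing in the argument is uniform across levels, which matches the pointwise caveat in the statement.
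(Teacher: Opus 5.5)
Your proposal is correct and is essentially the paper's argument: the paper embeds \(C^*(y,\one)\) into the \(C^*\)-ultraproduct along an arbitrary free ultrafilter and invokes complete isometry of injective \(*\)-homomorphisms, while you embed into \(\prod_n\M_{N_n}/\bigoplus_n\M_{N_n}\) (whose norm is the \(\limsup\) of representative norms) and pass to subsequences, which is equivalent; the trace part is the same entrywise computation. Two small remarks: your faithful-trace \(\liminf\) bound is redundant once complete isometry gives equality, and faithfulness plays no role in injectivity of \(\iota\), which follows directly from its isometry on the dense polynomial \(*\)-algebra, itself supplied by the scalar norm convergence.
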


The proof, given in Appendix~\ref{app:amplification-linearization}, embeds
the generated limit algebra into every \(C^*\)-ultraproduct of the matrix
algebras.  An injective \(*\)-homomorphism between \(C^*\)-algebras is
completely isometric, which yields
\eqref{eq:amplified-norm-convergence};
\eqref{eq:amplified-trace-convergence} is entrywise.

We will apply Fan--Sun--Wang's GOE theorem
\cite[Theorem~4.3]{FanSunWang2021} exactly at scalar level.  That theorem is
stated for self-adjoint \(*\)-polynomials.  If \(Q\) is arbitrary, its norm
conclusion follows by applying the theorem to the self-adjoint polynomial
\(Q^*Q\):
\[
 \norm{Q(Y_n)}^2=\norm{Q(Y_n)^*Q(Y_n)}.
\]
Trace convergence for arbitrary \(Q\) follows by applying the self-adjoint
statement to \(\Rea Q\) and \(\Ima Q\).  Lemma~\ref{lem:fixed-amplification}
is then applied separately for each fixed matrix level; no matrix-amplified
statement is attributed to Fan--Sun--Wang.

\subsection{From affine pencils to strong convergence}
\label{subsec:pencil-criterion}

We now state the deterministic passage used after the probabilistic
comparison has controlled all fixed affine pencils.  For a self-adjoint tuple
\(Y_n=(Y_{1,n},\ldots,Y_{s,n})\), fixed
\(A_0,\ldots,A_s\in\M_q(\C)_{\rm sa}\), and a self-adjoint target tuple
\(y\), set
\begin{align}
 L_n(A)&=A_0\otimes I_{N_n}+\sum_{j=1}^sA_j\otimes Y_{j,n},
 \label{eq:affine-pencil-finite}\\
 L(A)&=A_0\otimes\one+\sum_{j=1}^sA_j\otimes y_j.
 \label{eq:affine-pencil-limit}
\end{align}

\begin{proposition}[Companion-augmented affine-pencil criterion]
\label{prop:affine-pencil-criterion}
Let \(y_1,\ldots,y_s\) be self-adjoint elements of a tracial
\(C^*\)-probability space \((\mathcal A,\tau)\) with faithful trace, and let
\(Y_{1,n},\ldots,Y_{s,n}\in\M_{N_n}(\C)_{\rm sa}\).  The tuple may contain
both random coordinates and arbitrary deterministic companions.  Suppose
that, on the outcome under consideration, the following statements hold for
every fixed \(q\geq1\) and every self-adjoint coefficient list \(A\):
\begin{enumerate}[label=\textup{(\roman*)}]
 \item for every \(\varepsilon>0\), eventually
 \begin{equation}
  \Spec L_n(A)\subseteq
  \bigl\{t\in\R:\dist(t,\Spec L(A))<\varepsilon\bigr\};
  \label{eq:pencil-spectral-inclusion}
 \end{equation}
 \item for every integer \(p\geq1\),
 \begin{equation}
  (\tr_q\otimes\tr_{N_n})L_n(A)^{2p}
  \longrightarrow
  (\tr_q\otimes\tau)L(A)^{2p}.
  \label{eq:pencil-even-moment-convergence}
 \end{equation}
\end{enumerate}
Then \(Y_n\) converges strongly in joint \(*\)-distribution to \(y\).
Moreover, the conclusion holds at every separately fixed matrix
amplification.
\end{proposition}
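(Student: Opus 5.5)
Scalar strong convergence comes first, with amplification deferred to the end. Once \(Y_n\to y\) strongly in joint \(*\)-distribution, Lemma~\ref{lem:fixed-amplification} gives every fixed matrix level, so the final sentence of the proposition costs nothing. Since all \(Y_{j,n}\) and \(y_j\) are self-adjoint, a scalar \(*\)-polynomial is just a noncommutative polynomial \(P\) in \(s\) self-adjoint variables. Polynomials in non-self-adjoint coordinates are handled through \(\Rea\) and \(\Ima\), as in \S\ref{subsec:self-adjoint-pencils}.

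\emph{Trace convergence.} I will take \(q=1\) and \(A_0=0\) in (ii), so \(L_n(A)=\sum_j t_jY_{j,n}\) with real \(t_j\). Then
\[
\tr_{N_n}\Bigl(\sum_j t_jY_{j,n}\Bigr)^{2p}\to\tau\Bigl(\sum_j t_jy_j\Bigr)^{2p}.
\]
Both sides are homogeneous polynomials in \(t\) of degree \(2p\), and pointwise convergence on \(\R^s\) implies convergence of coefficients. Hence symmetrized even-degree words converge, which is not all words. To separate individual words I will use matrix coefficients. I take \(A_j\) to be the block shift coefficients \(A_j=t_j(E_{k,k+1}+E_{k+1,k})\) placed cyclically in \(\M_q\). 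The \((1,1)\) entry of a large even power then isolates ordered products. Alternatively, I can apply (ii) with \(A_0\neq0\) and expand \((A_0+\sum A_j\otimes Y_j)^{2p}\), recovering the lower-degree and odd moments through the \(A_0\)-dependence. This gives \(\tr_{N_n}P(Y_n)\to\tau P(y)\) for every \(P\).

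\emph{Uniform boundedness.} Applying (i) with \(q=1\) and \(L_n=Y_{j,n}\) shows \(\limsup\norm{Y_{j,n}}\leq\norm{y_j}\).

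\emph{Norm convergence.} Fix \(P\). The lower bound \(\liminf\norm{P(Y_n)}\ge\norm{P(y)}\) uses faithfulness. By the trace convergence just proved,
\[
\norm{P(Y_n)}\ge\bigl(\tr_{N_n}(P^*P)^{k}(Y_n)\bigr)^{1/2k}\to\norm{(P^*P)^k(y)}_{L^1}^{1/2k},
\]
and since \(\tau\) is faithful the last quantity tends to \(\norm{P(y)}\) as \(k\to\infty\).

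For the upper bound I will use the standard linearization trick, which I expect to be the main obstacle. Given the self-adjoint \(\mathfrak h(P)\) from \eqref{eq:hermitization-polynomial}, there is a self-adjoint linear pencil \(L(A)\) in some \(\M_q\) with the following property: for real \(\lambda\), \(\lambda-\mathfrak h(P)\) is invertible iff \(\Lambda(\lambda)-L(A)\) is invertible, with \(\Lambda(\lambda)=\lambda e_{11}\). The Schur complement identity controls the inverse norms in both directions, with constants depending on the pencil and on uniform bounds on \(\norm{Y_n}\). These bounds are available from the previous step. This is the "target-independent linearization" of Haagerup--Thorbjørnsen and Anderson.

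The subtlety is that the pencil \(\Lambda(\lambda)-L\) has \(\lambda\) entering only in one corner, so (i) cannot be applied to it directly. I will handle this in the Haagerup--Thorbjørnsen way. If \(\lambda\notin\Spec\mathfrak h(P)(y)\), the inverse \((\Lambda(\lambda)\otimes1-L(A))^{-1}\) exists with norm \(C\) in the limit algebra. A small perturbation, together with (i) applied to \(L(A)-\Lambda(\lambda)\) (itself a self-adjoint affine pencil) at distance from \(0\), gives invertibility for \(Y_n\) eventually. Indeed, \(0\notin\Spec\) of the limit pencil, so by (i) it is not in \(\Spec\) of the finite pencil eventually, hence \(\lambda\notin\Spec\mathfrak h(P)(Y_n)\).

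Compactness over \(\lambda\) in \([-K,K]\setminus(\Spec+\varepsilon)\), with \(K\) given by the uniform bound, yields \(\Spec\mathfrak h(P)(Y_n)\subseteq\Spec\mathfrak h(P)(y)+\varepsilon\) eventually. Hence \(\limsup\norm{P(Y_n)}\leq\norm{P(y)}\). Uniformity of (i) in \(\lambda\) over the compact set follows by covering it with finitely many \(\lambda\)'s and the Lipschitz dependence of spectra on \(\lambda\). The amplified conclusion then follows from Lemma~\ref{lem:fixed-amplification}.
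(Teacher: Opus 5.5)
Your boundedness step, the faithful-trace lower bound, and the linearization upper bound are correct. For the upper bound you take a slightly different route from the paper. You apply (i) pointwise in \(\lambda\) to the affine pencil \(L(A)-\lambda e_{11}\), then make the conclusion uniform using Weyl's inequality and a finite cover of the compact set \([-K,K]\) minus the open \(\varepsilon\)-neighbourhood of \(\Spec\mathfrak h(P)(y)\). The paper instead turns (i) into an exact spectral inclusion in the quotient \(\prod_n\M_{N_n}(\C)/\bigoplus_n\M_{N_n}(\C)\) and applies the linearization equivalence there, which avoids any uniformity in \(\lambda\). Both routes work, as does deferring amplification to Lemma~\ref{lem:fixed-amplification}.

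The gap is in the step that recovers traces of individual ordered words. Both the trace half of the conclusion and your lower bound (through \(\tr\,(P^*P)^k\)) depend on it, and as described it does not work, for three reasons.
\begin{itemize}
\item Hypothesis (ii) only controls the full normalized trace \((\tr_q\otimes\tr_{N_n})L_n(A)^{2p}\). It never gives you a single block such as the \((1,1)\) entry.
\item With real symmetric coefficients \(t_\ell(E_{\ell,\ell+1}+E_{\ell+1,\ell})\) on a \(k\)-cycle, the coefficient of \(t_1\cdots t_k\) collects the forward and the backward traversal of the cycle with the same weight. By self-adjointness you therefore get \(\tr(Y_{i_1}\cdots Y_{i_k})+\tr(Y_{i_k}\cdots Y_{i_1})=2\Rea\tr(Y_{i_1}\cdots Y_{i_k})\). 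The imaginary part is generally nonzero for \(k\ge3\). It is needed both for \(P=z_1z_2z_3\) and for \(\tr\,(P^*P)^k\) when \(P\) has non-real coefficients, and no real symmetric choice can see it.
\item The monomial \(t_1\cdots t_k\) occurs only in the power exactly \(k\), not in a ``large even power''.
\end{itemize}

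The repair is the paper's polarization. First apply (ii) to \(tI+K_n\) and extract coefficients in \(t\); this gives every power of a linear pencil \(K_n\), odd powers included. Then use Hermitian but non-real coefficients \(A_j(\zeta)=\sum_{\ell:i_\ell=j}(\zeta_\ell E_{\ell,\ell+1}+\bar\zeta_\ell E_{\ell+1,\ell})\) with \(\zeta\in\mathbb T^k\), and isolate the Fourier coefficient of \(\zeta_1\cdots\zeta_k\) in the \(k\)-th power. Backward traversals carry \(\overline{\zeta_1\cdots\zeta_k}\) and drop out. The \(k\) forward traversals from different starting vertices give the same word trace by cyclicity. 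That is why the full trace suffices and no single matrix entry is needed.
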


This criterion is target independent: no freeness, Gaussianity, or
semicircular structure is assumed for \(y\).  Its proof is recalled in
Appendix~\ref{app:amplification-linearization}.  The main points are worth
recording here.  Passing to the quotient
\[
 \prod_n\M_{N_n}(\C)\big/\bigoplus_n\M_{N_n}(\C)
\]
turns \eqref{eq:pencil-spectral-inclusion} into an exact spectral inclusion.
The self-adjoint linearization trick then gives, for every fixed
matrix-valued \(*\)-polynomial \(P\),
\begin{equation}
 \limsup_{n\to\infty}\norm{P(Y_n)}\leq\norm{P(y)}.
 \label{eq:pencil-upper-bound}
\end{equation}
This use of linearization depends only on the polynomial and its spectral
parameter, not on the law of the target tuple.

Next, scalar shifts and matrix-unit polarization recover the normalized
trace of every noncommutative word from
\eqref{eq:pencil-even-moment-convergence}.  Thus, for every fixed
matrix-valued \(P\),
\begin{equation}
 (\tr_q\otimes\tr_{N_n})P(Y_n)
 \longrightarrow(\tr_q\otimes\tau)P(y).
 \label{eq:polynomial-trace-from-pencils}
\end{equation}
Finally, apply this convergence to \((P^*P)^p\).  With
\(a=P(y)^*P(y)\),
\begin{equation}
 \liminf_{n\to\infty}\norm{P(Y_n)}
 \geq\bigl[(\tr_q\otimes\tau)(a^p)\bigr]^{1/(2p)}.
 \label{eq:faithful-trace-lower-bound}
\end{equation}
Faithfulness implies that the right-hand side increases to \(\norm{P(y)}\)
as \(p\to\infty\).  Together with
\eqref{eq:pencil-upper-bound}, this proves norm convergence.  In the
probabilistic arguments below, comparison supplies (i)--(ii); the
linearization, polarization, and faithful-trace lower bound are the separate
deterministic mechanism that upgrades those inputs to the theorem.

\section{The covariance-matched Gaussian comparator}
\label{sec:gaussian-comparator}

This section proves the Gaussian identification asserted in
\Cref{prop:limit-identification}.  Two points require some care.  First,
covariance is always taken on the underlying real vector space, so that the
complex pseudocovariances are not discarded.  Second, the real Ginibre
matrices which realize the directed type blocks are obtained as genuine
corners of GOE matrices of twice the size.  This permits a direct application
of the scalar strong-asymptotic-freeness theorem of Fan--Sun--Wang
\cite[Theorem~4.3]{FanSunWang2021}.  Fixed matrix amplification is supplied
afterwards by \Cref{lem:fixed-amplification}, not by the cited theorem.

\subsection{Exact covariance factorizations}
\label{subsec:exact-gaussian-factorization}

For a complex vector \(z=(z_1,\ldots,z_k)\), write
\[
 \mathfrak r(z)=(\Rea z_1,\Ima z_1,\ldots,
                 \Rea z_k,\Ima z_k)\in\R^{2k}.
\]
All Gaussian factorizations below are factorizations of the covariance of
this realification.

Fix \(a<b\).  Orient an edge joining types \(a\) and \(b\) from \(a\) to
\(b\), irrespective of the numerical order of its two vertices.  Let
\((Z^{ab}_{+},Z^{ab}_{-})\) be the centered real Gaussian vector having the
same real covariance as an atom with law \(\mu^{ab}\).  As in
\eqref{eq:cross-real-factorization}, choose
\begin{equation}
 (Z^{ab}_{+,\kappa},Z^{ab}_{-,\kappa})_{\kappa\le d}
 =\sum_{\ell=1}^{L_{ab}}
   (\alpha^{ab}_{\kappa\ell},\beta^{ab}_{\kappa\ell})_{\kappa\le d}
   g^{ab}_{\ell},
 \qquad L_{ab}\le 4d,
 \label{eq:cross-factorization-recalled}
\end{equation}
where the \(g^{ab}_{\ell}\) are independent standard real Gaussians and the
coefficients may be complex.  Let
\((G^{ab}_{\ell,n})_{\ell\le L_{ab}}\) be independent real Ginibre matrices,
with all entries independent and distributed as \(N(0,n^{-1})\), and set
\begin{equation}
 Y^{ab}_{\kappa,n}
 =\sum_{\ell=1}^{L_{ab}}
  \left(
   \alpha^{ab}_{\kappa\ell}P_{a,n}G^{ab}_{\ell,n}P_{b,n}
   +\beta^{ab}_{\kappa\ell}P_{b,n}(G^{ab}_{\ell,n})^*P_{a,n}
  \right).
 \label{eq:cross-gaussian-matrix}
\end{equation}
For an edge with endpoint types \(a,b\), the standardized entries in the two
directions are the two components in
\eqref{eq:cross-factorization-recalled}, driven by the same real Gaussian
vector.  Thus \eqref{eq:cross-gaussian-matrix} retains the entire real
covariance between colors and orientations.  If the vertex of type \(b\)
has the smaller numerical label, the convention
\(\mu^{ba}=S_{\#}\mu^{ab}\) gives precisely the same assertion after
reversal.  Distinct unordered type pairs are assigned independent Ginibre
families.

For a same-type law \(\mu^{aa}\), let
\((Z^a_+,Z^a_-)\) denote its covariance-matched Gaussian pair and recall
\[
 U^a=\frac{Z^a_++Z^a_-}{\sqrt2},
 \qquad
 V^a=\frac{Z^a_+-Z^a_-}{\sqrt2}.
\]
Endpoint exchangeability sends \((U^a,V^a)\) to \((U^a,-V^a)\).  Every
real cross-covariance between the two centered vectors is consequently zero;
as they are jointly Gaussian, \(U^a\) and \(V^a\) are independent.  Choose
the factorizations from \eqref{eq:same-real-factorization},
\begin{equation}
 U^a_\kappa=\sum_{\ell=1}^{L_{a,+}}
                 \alpha^a_{\kappa\ell}g^a_{+,\ell},
 \qquad
 V^a_\kappa=\sum_{\ell=1}^{L_{a,-}}
                 \beta^a_{\kappa\ell}g^a_{-,\ell},
 \qquad L_{a,+},L_{a,-}\le 2d,
 \label{eq:same-factorization-recalled}
\end{equation}
using independent real Gaussian source families.  Take independent real
Ginibre matrices \(G^a_{+,\ell,n}\) and \(G^a_{-,\ell,n}\), independent also
of all the cross-type sources, and put
\begin{equation}
 S^a_{\ell,n}
 =\frac{G^a_{+,\ell,n}+(G^a_{+,\ell,n})^*}{\sqrt2},
 \qquad
 K^a_{\ell,n}
 =\frac{G^a_{-,\ell,n}-(G^a_{-,\ell,n})^*}{\sqrt2}.
 \label{eq:symmetric-skew-ginibre}
\end{equation}
The same-type polynomial model is
\begin{equation}
 \widetilde Y^{aa}_{\kappa,n}
 =\frac1{\sqrt2}P_{a,n}
  \left(
   \sum_{\ell=1}^{L_{a,+}}\alpha^a_{\kappa\ell}S^a_{\ell,n}
   +\sum_{\ell=1}^{L_{a,-}}\beta^a_{\kappa\ell}K^a_{\ell,n}
  \right)P_{a,n}.
 \label{eq:same-gaussian-matrix}
\end{equation}
For \(i<j\) of type \(a\), the variables
\(\sqrt n S^a_{\ell,n}(i,j)\) and
\(\sqrt n K^a_{\ell,n}(i,j)\) are independent standard real Gaussians,
while
\(S^a_{\ell,n}(j,i)=S^a_{\ell,n}(i,j)\) and
\(K^a_{\ell,n}(j,i)=-K^a_{\ell,n}(i,j)\).  Hence the standardized upper and
lower entries in \eqref{eq:same-gaussian-matrix} are respectively
\begin{equation}
 \frac{U^a_\kappa+V^a_\kappa}{\sqrt2}=Z^a_{+,\kappa},
 \qquad
 \frac{U^a_\kappa-V^a_\kappa}{\sqrt2}=Z^a_{-,\kappa}.
 \label{eq:same-entry-check}
\end{equation}
This proves exact agreement in law off the diagonal and also shows why the
outer factor \(2^{-1/2}\) in \eqref{eq:same-gaussian-matrix} is compulsory.

Let
\[
 \widetilde Y_{\kappa,n}
 =\sum_{a<b}Y^{ab}_{\kappa,n}
  +\sum_a\widetilde Y^{aa}_{\kappa,n}.
\]
If \(Y_{\kappa,n}\) denotes the full covariance-matched Gaussian comparator,
couple it with \(\widetilde Y_{\kappa,n}\) so that their off-diagonal entries
agree and write
\begin{equation}
 Y_{\kappa,n}=\widetilde Y_{\kappa,n}+D_{\kappa,n}.
 \label{eq:gaussian-diagonal-correction}
\end{equation}
Here \(D_{\kappa,n}\) is diagonal.  The prescribed diagonal covariance may
depend on the type; nevertheless the finite second-moment assumption and the
finiteness of the type set imply that every real coordinate of every diagonal
entry in \(D_{\kappa,n}\) has variance at most \(C/n\).  Therefore
\begin{equation}
 \mathbb P\{\norm{D_{\kappa,n}}>t\}
 \le Cn\exp(-cnt^2),
 \qquad t>0,
 \label{eq:diagonal-gaussian-tail}
\end{equation}
after adjusting constants.  It follows by Borel--Cantelli that
\begin{equation}
 \max_{\kappa\le d}\norm{D_{\kappa,n}}\longrightarrow0
 \quad\text{almost surely}.
 \label{eq:diagonal-gaussian-negligible}
\end{equation}
This correction is essential for an exact finite-\(n\) representation when
the comparator has zero or otherwise prescribed diagonal: the pure
Ginibre-polynomial model is exact off the diagonal, and the discrepancy on
the diagonal is norm-negligible.

\subsection{Real Ginibre matrices as doubled GOE corners}
\label{subsec:goe-doubling}

Let \(W_{\ell,2n}\) be a standard GOE matrix in the normalization of
Fan--Sun--Wang: its off-diagonal entries have variance \((2n)^{-1}\), and
its diagonal entries have variance \(n^{-1}\).  Write
\[
 e^{(n)}_{uv}=E_{uv}\otimes I_n\in\M_{2n}(\C),
 \qquad u,v\in\{1,2\}.
\]
In the corner \(e^{(n)}_{11}\M_{2n}e^{(n)}_{11}\cong\M_n\), define
\begin{equation}
 \Gamma_{\ell,n}
 =\sqrt2\,e^{(n)}_{11}W_{\ell,2n}
                  e^{(n)}_{22}e^{(n)}_{21}.
 \label{eq:ginibre-goe-corner}
\end{equation}
Under the corner identification, \(\Gamma_{\ell,n}\) is the \(n\)-by-\(n\)
matrix with entries
\(\sqrt2(W_{\ell,2n})_{i,n+j}\), \(1\le i,j\le n\).  These entries are
mutually independent \(N(0,n^{-1})\) variables.  In particular, the
positions \(i=j\) in this \(n\)-by-\(n\) matrix are still globally
off-diagonal GOE positions \((i,n+i)\); there is no exceptional diagonal in
the Ginibre corner.

For \(Z\in\M_n(\C)\), let \(\iota_n(Z)=E_{11}\otimes Z\), regarded as an
element of the \(e^{(n)}_{11}\)-corner.  Then
\begin{equation}
 \norm{\iota_n(Z)}=\norm{Z},
 \qquad
 \tr_{2n}(\iota_n(Z))=\frac12\tr_n(Z).
 \label{eq:corner-norm-trace}
\end{equation}
Thus the normalized trace on the corner is \(2\tr_{2n}\), while its operator
norm is the ordinary \(n\)-dimensional norm.

Set
\begin{equation}
 \widehat P_{a,n}=I_2\otimes P_{a,n},
 \qquad
 \widehat A_{j,n}=I_2\otimes A_{j,n},
 \label{eq:lifted-deterministic-variables}
\end{equation}
and consider the single deterministic tuple
\begin{equation}
 \mathcal D_n=
 \bigl((e^{(n)}_{uv})_{u,v\le2},
       (\widehat P_{a,n})_{a\le r},
       (\widehat A_{j,n})_{j\le m}\bigr).
 \label{eq:doubled-deterministic-tuple}
\end{equation}
The joint strong convergence of \((P_n,A_n)\), together with
\Cref{lem:fixed-amplification} at level \(2\), yields joint strong
convergence of \(\mathcal D_n\) to
\begin{equation}
 \mathcal D=
 \bigl((E_{uv}\otimes\one)_{u,v\le2},
       (I_2\otimes p_a)_{a\le r},
       (I_2\otimes a_j)_{j\le m}\bigr)
 \label{eq:doubled-deterministic-limit}
\end{equation}
in \((\M_2(\C)\otimes B,\tr_2\otimes\tau_B)\).  In particular, all
members of \(\mathcal D_n\) are uniformly bounded in norm.  When \(B\) is
presented only as a tracial \(C^*\)-algebra, we work in its faithful GNS
representation and then in the generated von Neumann algebra; faithfulness
preserves every operator norm under consideration.

Use one independent doubled GOE for every Ginibre source appearing in
\eqref{eq:cross-gaussian-matrix} and \eqref{eq:symmetric-skew-ginibre}.
The identities
\begin{align}
 \iota_n(P_{a,n}G_{\ell,n}P_{b,n})
 &=\widehat P_{a,n}\Gamma_{\ell,n}\widehat P_{b,n},
 \label{eq:masked-ginibre-polynomial}\\
 \iota_n(A_{j,n})
 &=e^{(n)}_{11}\widehat A_{j,n}e^{(n)}_{11}
 \label{eq:corner-companion-polynomial}
\end{align}
show that every masked cross-type block, its adjoint, every same-type
symmetric or skew block, and every companion is a fixed \(*\)-polynomial in
the doubled GOEs and \(\mathcal D_n\).

Fan--Sun--Wang's theorem applies at matrix size \(N=2n\): independent GOEs
with the normalization above, adjoined to a jointly strongly convergent
deterministic tuple, converge almost surely strongly to a free standard
semicircular family free from the deterministic limit
\cite[Theorem~4.3]{FanSunWang2021}.  Their theorem is stated for
deterministic matrices in \(\M_N(\C)\), not only real symmetric deterministic
matrices, so the matrix units and complex companions in \(\mathcal D_n\) are
admissible.  The conclusion is stated for self-adjoint polynomials.  For an
arbitrary scalar \(*\)-polynomial \(Q\),
the norm conclusion follows by applying it to \(Q^*Q\), while trace
convergence follows by applying it to \(\Rea Q\) and \(\Ima Q\).  This gives
scalar strong convergence of every polynomial model above.  The diagonal
perturbation \eqref{eq:diagonal-gaussian-negligible} does not alter that
conclusion.

\subsection{Compression and the explicit free limit}
\label{subsec:compressed-free-limit}

We identify the corner limit with the free variables in
\eqref{eq:cross-free-limit} and \eqref{eq:same-free-limit}.  Let
\((\mathcal N,\varphi)\) contain \(\M_2(\C)\otimes B\) and a free standard
semicircular family \((s_\ell)\), free from that algebra.  Put
\[
 e=E_{11}\otimes\one,
 \qquad f=E_{22}\otimes\one,
 \qquad v=E_{21}\otimes\one,
 \]
and equip \(e\mathcal Ne\) with its normalized trace
\(\varphi_e=2\varphi|_{e\mathcal Ne}\).  For every source define
\begin{equation}
 c_\ell=\sqrt2\,e s_\ell f v=\sqrt2\,e s_\ell v.
 \label{eq:circular-corner}
\end{equation}
The family \((c_\ell)\) is a standard free circular family, free from the
corner algebra \(e(\M_2\otimes B)e\cong B\).  Indeed, if \(E\) denotes the
conditional expectation onto \(\M_2\otimes B\), let
\(E_e=(eE(\,\cdot\,)e)|_{e\mathcal Ne}\), viewed through this corner
identification.  Then
\[
 E(s_\ell d s_k)=\delta_{\ell k}
  (\tr_2\otimes\tau_B)(d)\one.
\]
After compression, the only nonzero second cumulants are
\begin{equation}
 E_e(c_\ell b c_k^*)=E_e(c_\ell^*bc_k)
  =\delta_{\ell k}\tau_B(b)e,
 \qquad b\in B,
 \label{eq:circular-corner-covariance}
\end{equation}
and all higher cumulants vanish.  This proves both circularity and freeness
from \(B\), with
\(\varphi_e(c_\ell c_\ell^*)=\varphi_e(c_\ell^*c_\ell)=1\).

The limit of \eqref{eq:cross-gaussian-matrix} is therefore
\[
 x^{ab}_\kappa
 =\sum_{\ell=1}^{L_{ab}}
 \left(
  \alpha^{ab}_{\kappa\ell}p_ac^{ab}_\ell p_b
  +\beta^{ab}_{\kappa\ell}p_b(c^{ab}_\ell)^*p_a
 \right),
\]
which is \eqref{eq:cross-free-limit}.  For the two same-type source
families, write
\begin{equation}
 s^a_{+,\ell}=\frac{c^a_{+,\ell}+(c^a_{+,\ell})^*}{\sqrt2},
 \qquad
 s^a_{-,\ell}=\frac{i(c^a_{-,\ell}-(c^a_{-,\ell})^*)}{\sqrt2}.
 \label{eq:semicircular-from-circular}
\end{equation}
They form a free standard semicircular family, free from \(B\) and all the
other sources.  Since the skew matrix in
\eqref{eq:symmetric-skew-ginibre} converges to
\((c-c^*)/\sqrt2=-is_-\), the limit of
\eqref{eq:same-gaussian-matrix} is
\[
 x^{aa}_\kappa
 =\frac1{\sqrt2}p_a
  \left(
   \sum_{\ell=1}^{L_{a,+}}\alpha^a_{\kappa\ell}s^a_{+,\ell}
   -i\sum_{\ell=1}^{L_{a,-}}\beta^a_{\kappa\ell}s^a_{-,\ell}
  \right)p_a,
\]
exactly as in \eqref{eq:same-free-limit}.  Sources belonging to distinct
type pairs and sectors are free because their doubled GOEs were independent.

\begin{lemma}[Factorization invariance]
\label{lem:gaussian-factorization-invariance}
The joint \(*\)-distribution of the family
\(x_\kappa=\sum_{a<b}x^{ab}_\kappa+\sum_a x^{aa}_\kappa\), together with
\(B\), depends only on the real covariance matrices of the type-pair laws,
and not on any of the factorizations chosen above.
\end{lemma}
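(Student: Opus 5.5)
The plan is to show that any two admissible factorizations of the same real covariance differ by a real orthogonal change of source coordinates. Such a rotation maps a free standard circular (or semicircular) family, free from \(B\), to another family with the same property. The joint \(*\)-distribution of \(B\) together with all sources is therefore unchanged, and since \(x\) is the same fixed polynomial in the sources and \(B\) in both cases, the \(*\)-distribution of \((p,a,x)\) is unchanged as well.

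\textbf{Step 1: linear algebra.} Fix \(a<b\). Realify the coefficient vectors: write \(F\in M_{4d\times L}(\R)\) for the matrix whose \(\ell\)-th column is \(\mathfrak r\bigl((\alpha^{ab}_{\kappa\ell},\beta^{ab}_{\kappa\ell})_\kappa\bigr)\). The factorization \eqref{eq:cross-real-factorization} says exactly that \(FF^{\transpose}=\Gamma^{ab}\). Given two factorizations \(F,F'\), pad both with zero columns to the common width \(N=4d\); a zero column contributes nothing to \(x^{ab}_\kappa\), so padding only adds unused free sources. From \(FF^{\transpose}=F'F'^{\transpose}\) the standard polar-decomposition argument yields a real orthogonal \(O\in M_N(\R)\) with \(F'=FO\). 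The same argument applies to the two same-type sectors \(U^a,V^a\) separately, with width \(2d\) each.

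\textbf{Step 2: rewriting \(x\).} The map from sources to \(x^{ab}_\kappa\), namely
\[
(c_\ell)_\ell\mapsto\sum_\ell\bigl(\alpha_{\kappa\ell}\,p_ac_\ell p_b+\beta_{\kappa\ell}\,p_b c_\ell^*p_a\bigr),
\]
is real-linear in the tuple \((c_\ell)\). Because \(O\) is real, \((\sum_m O_{\ell m}c_m)^*=\sum_m O_{\ell m}c_m^*\). It follows that \(x^{ab}_\kappa\) built from \(F'\) with sources \(c\) equals \(x^{ab}_\kappa\) built from \(F\) with rotated sources \(c'_\ell=\sum_m O_{\ell m}c_m\). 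The same holds for the same-type formula \eqref{eq:same-free-limit} with semicircular sources \(s^a_{\pm}\), rotated by the respective orthogonal matrices.

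\textbf{Step 3: invariance of the joint law of \(B\) with the sources (main obstacle).} The key claim is that \(B\) together with the rotated sources has the same joint \(*\)-distribution as \(B\) together with the original ones. I would argue through free cumulants in the reduced free product. The sources are free from \(B\) and from each other across type pairs and sectors, so it suffices to show that the rotated family within one block is again a standard free circular (respectively semicircular) family, free from \(B\) and from the remaining blocks. By multilinearity of the free cumulants, a mixed cumulant involving a rotated source and an element of \(B\), or an element of another block, is a linear combination of mixed cumulants of the original sources with those elements, all of which vanish. Higher cumulants of the rotated sources vanish for the same reason. The only surviving second cumulants are
\[
\kappa_2(c'_\ell,c_k'^{*})=\sum_m O_{\ell m}O_{km}=\delta_{\ell k},
\qquad
\kappa_2(c'_\ell,c'_k)=\kappa_2(c_\ell'^{*},c_k'^{*})=0,
\]
and the analogous identity holds for semicircular sources. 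Since joint moments are determined by cumulants, all mixed \(*\)-moments of \((B,\text{sources})\) coincide, and hence so do those of \((p,a,x)\).

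\textbf{Step 4: norms.} Both \(\tau_B\) and its free-product extension are faithful traces. For faithful tracial states, the \(*\)-distribution determines the norm of every polynomial via \(\norm{y}=\lim_p\tau((y^*y)^p)^{1/2p}\). Applying this with \(\tr_q\otimes\tau_B\), which is again faithful, gives invariance of all scalar and matrix-amplified polynomial norms. This is the remaining clause of \Cref{prop:limit-identification}.
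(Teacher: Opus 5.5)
Your proposal is correct and follows the paper's argument: after zero-padding, two realified factorizations of the same covariance differ by a real orthogonal change of sources, free circular and semicircular families free from \(B\) are invariant under such rotations, and faithfulness of \(\tr_q\otimes\tau_B\) on the reduced free product turns equality of amplified moments into equality of amplified norms. The only difference is that you verify the rotation invariance explicitly through multilinearity of free cumulants, whereas the paper takes it as known.
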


\begin{proof}
After zero columns have been added, two real factorizations of the same
positive semidefinite covariance matrix differ by an orthogonal change of
the standard real Gaussian sources.  A free circular family is invariant in
joint \(*\)-distribution under real orthogonal rotations, as is a free
semicircular family.  Apply this observation to each cross-type
factorization and separately to the two orthogonal same-type covariance
sectors.  Equivalently, both choices give the same finite-dimensional
Gaussian comparator for every \(n\), and the strong limit identified above
is unique.  The reduced free-product trace is faithful, as is its fixed
matrix amplification; hence, for every \(T\in\M_q(C^*(B,x))\),
\[
 \norm{T}^{2}=\lim_{k\to\infty}
 \bigl[(\tr_q\otimes\tau_B)((T^*T)^k)\bigr]^{1/k}.
\]
Thus equality of the amplified joint \(*\)-moments also gives equality of
all amplified polynomial norms.
\end{proof}

The covariance and normalization calculations underlying these formulas,
including the absence of any additional factor involving the type
proportions \(\pi_a\), are recorded in
Appendix~\ref{app:covariance-bookkeeping}.

\begin{proposition}[Strong limit of the Gaussian comparator]
\label{prop:gaussian-strong-limit}
On a single probability-one event,
\[
 (P_{1,n},\ldots,P_{r,n},A_n,Y_{1,n},\ldots,Y_{d,n})
 \]
converges jointly strongly in \(*\)-distribution to
\[
 (p_1,\ldots,p_r,a,x_1,\ldots,x_d).
\]
Consequently, for every separately fixed \(q\ge1\) and every fixed
\(Q\in\M_q(\C\langle z,z^*\rangle)\), both the normalized trace and the
operator norm of \(Q(P_n,A_n,Y_n)\) converge to those of
\(Q(p,a,x)\).  No uniformity in \(q\) is asserted.
\end{proposition}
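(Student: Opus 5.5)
The plan is to assemble the ingredients of this section into a single almost-sure statement. First, realize the comparator concretely. Take one independent doubled GOE \(W_{\ell,2n}\) for every Ginibre source in \eqref{eq:cross-gaussian-matrix} and \eqref{eq:symmetric-skew-ginibre}, and define \(\Gamma_{\ell,n}\) by \eqref{eq:ginibre-goe-corner}. By the entry computation following \eqref{eq:ginibre-goe-corner}, \(\Gamma_{\ell,n}\) has exactly the law of a real Ginibre matrix, so we may set \(G_{\ell,n}=\Gamma_{\ell,n}\). Then \eqref{eq:masked-ginibre-polynomial}--\eqref{eq:corner-companion-polynomial} show that, for every fixed scalar \(*\)-polynomial \(Q\), the element \(\iota_n(Q(P_n,A_n,\widetilde Y_n))\) equals \(\widehat Q_n:=\widetilde Q(\mathcal D_n,W_{2n})\). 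Here \(\widetilde Q\) is a fixed \(*\)-polynomial obtained by substituting the corner expressions and multiplying by \(e^{(n)}_{11}\) on both sides. The same substitution at the level of limits gives \(e\,\widetilde Q(\mathcal D,s)\,e\). By the computation in \cref{subsec:compressed-free-limit}, this corresponds, under the identification \(e\mathcal Ne\cong\) the algebra generated by \(B\) and the circular sources, to \(Q(p,a,x)\). The sign bookkeeping for the skew sector is fixed by \eqref{eq:semicircular-from-circular}.

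Next, invoke \cite[Theorem~4.3]{FanSunWang2021} at size \(2n\). The doubled deterministic tuple \(\mathcal D_n\) converges jointly strongly to \(\mathcal D\) by \Cref{lem:fixed-amplification} at level \(2\). Hence, on one probability-one event \(\Omega_1\), the tuple \((\mathcal D_n,W_{2n})\) converges strongly to \((\mathcal D,s)\), with \(s\) free from \(\M_2\otimes B\). Non-self-adjoint polynomials are handled through \(Q^*Q\) and \(\Rea Q,\Ima Q\), as explained earlier. Two issues need care here. First, countably many polynomials suffice: intersect the events over rational-coefficient polynomials, then extend to complex coefficients by uniform boundedness and continuity. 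Second, the normalized trace on the corner is \(2\tr_{2n}\) by \eqref{eq:corner-norm-trace}, matching \(\varphi_e=2\varphi\), while norms are preserved. Together these give, on \(\Omega_1\), \(\tr_n Q(P_n,A_n,\widetilde Y_n)\to\tau_B Q(p,a,x)\) and \(\norm{Q(P_n,A_n,\widetilde Y_n)}\to\norm{Q(p,a,x)}\).

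It remains to pass from \(\widetilde Y\) to \(Y\) and to matrix levels. On the event of \eqref{eq:diagonal-gaussian-negligible}, \(Y_{\kappa,n}-\widetilde Y_{\kappa,n}=D_{\kappa,n}\to0\) in norm. All variables are uniformly bounded on \(\Omega_1\), because their norms converge. A telescoping expansion of each monomial therefore gives \(\norm{Q(P_n,A_n,Y_n)-Q(P_n,A_n,\widetilde Y_n)}\to0\), and this controls both traces and norms. Intersecting the two probability-one events yields scalar joint strong convergence on a single event. Finally, \Cref{lem:fixed-amplification}, applied pathwise on that event, gives the matrix-amplified conclusions separately for each \(q\).

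The main obstacle is the identification step, not the probabilistic input. We must verify that the corner compression of the free semicircular family together with \(\M_2\otimes B\) produces precisely a free circular family, free from \(B\), with the stated normalization, and with the masking by \(p_a,p_b\) correctly placed. The operator-valued cumulant computation \eqref{eq:circular-corner-covariance} handles this. I would check it by verifying that the \(E_e\)-valued second cumulants match those of a \(B\)-free circular system and that higher cumulants vanish, since the semicircular family is \(\M_2\otimes B\)-valued semicircular with scalar covariance. A second, minor point to check is that independence across sources in the finite model is matched by freeness of the distinct \(s_\ell\) in the limit.
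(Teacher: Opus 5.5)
Your proposal is correct and follows the paper's route. You realize each Ginibre source as a doubled-GOE corner, apply Fan--Sun--Wang at size \(2n\) with the lifted tuple \(\mathcal D_n\), identify the compressed limit through the corner cumulant computation, absorb the diagonal correction \(D_{\kappa,n}\) by norm continuity, and pass to fixed matrix levels with \Cref{lem:fixed-amplification}; the only difference is cosmetic, since applying that lemma pathwise on the scalar event makes the paper's extra countable intersection over \(q\) unnecessary.
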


\begin{proof}
The scalar assertion follows from the doubled-GOE argument and the
norm-negligible diagonal correction.  Apply
\Cref{lem:fixed-amplification} separately at every fixed \(q\).  A countable
intersection over \(q\), polynomials with rational real and imaginary
coefficients, and their adjoints gives one full-probability event.  Density
and the almost-sure uniform norm bounds extend the assertion to every fixed
complex coefficient list.
\end{proof}

We finally record the uniform integrability needed when covariance
comparison is formulated in terms of expected trace moments.

\begin{lemma}[Gaussian pencil moments]
\label{lem:gaussian-pencil-ui}
Let \(\cG_n\) be any fixed amplified self-adjoint pencil that is affine in
\(Y_n\), whose random coefficients may be left--right masked by the type
projections \(P_n\), and whose deterministic shift is a fixed affine pencil in
\((P_n,A_n)\).  Let \(\cG\) be the corresponding pencil in \((p,a,x)\).  For
every fixed integer \(k\ge1\),
\begin{equation}
 \sup_n\E\!\left[
  \left|(\tr_q\otimes\tr_n)(\cG_n^{2k})\right|^2
 \right]<\infty,
 \qquad
 \E(\tr_q\otimes\tr_n)(\cG_n^{2k})
 \longrightarrow(\tr_q\otimes\tau_B)(\cG^{2k}).
 \label{eq:gaussian-expected-moments}
\end{equation}
\end{lemma}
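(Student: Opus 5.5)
The natural route is uniform integrability plus almost-sure convergence. By \Cref{prop:gaussian-strong-limit}, applied at the fixed level \(q\) to the fixed polynomial \(\cG^{2k}\), we have almost surely
\[
(\tr_q\otimes\tr_n)(\cG_n^{2k})\to(\tr_q\otimes\tau_B)(\cG^{2k}).
\]
This uses that \(\cG_n\) is a fixed \(\M_q\)-valued \(*\)-polynomial in \((P_n,A_n,Y_n)\), since the masks are type projections. Hence the second claim in \eqref{eq:gaussian-expected-moments} follows from the first by Vitali's theorem: a uniform \(L^2\) bound gives uniform integrability. So the substance is the uniform second-moment bound.

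For that bound, first use
\[
\abs{(\tr_q\otimes\tr_n)(\cG_n^{2k})}\le\norm{\cG_n}^{2k}.
\]
Write \(\cG_n=\cG_n^{(0)}+\sum_j B_j\otimes M_{j,n}\), where \(\cG_n^{(0)}\) is the deterministic shift and each \(M_{j,n}\) is a projection-masked self-adjoint coordinate of some \(Y_{\kappa,n}\).

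The deterministic part is harmless: \(\norm{\cG_n^{(0)}}\) is uniformly bounded because \((P_n,A_n)\) converges strongly, so all norms of fixed polynomials in it converge.

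For the random part, \(\norm{P_{a,n}MP_{b,n}}\le\norm{M}\), so it suffices to bound \(\sup_n\E\norm{Y_{\kappa,n}}^{4k}\). Decompose \(Y_{\kappa,n}=\widetilde Y_{\kappa,n}+D_{\kappa,n}\) as in \eqref{eq:gaussian-diagonal-correction}. Then \(\widetilde Y_{\kappa,n}\) is a fixed linear combination, with coefficients from the factorizations, of masked real Ginibre matrices and their adjoints. Each such matrix has norm tails
\[
\mathbb P\{\norm{G_{\ell,n}}>C+t\}\le e^{-cnt^2},
\]
by Gaussian concentration, since the norm is a \(n^{-1/2}\)-Lipschitz function of the standardized entries, together with \(\E\norm{G_{\ell,n}}\le C\) (Gordon/Slepian, or the Bai--Yin bound). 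The diagonal correction satisfies \eqref{eq:diagonal-gaussian-tail}. Integrating these tails gives \(\sup_n\E\norm{Y_{\kappa,n}}^{p}<\infty\) for every \(p\). By Minkowski, \(\sup_n\E\norm{\cG_n}^{4k}<\infty\), which is the first claim.

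The only step needing care is the uniform moment bound for the Gaussian norms including the diagonal piece. The factor \(n\) in \eqref{eq:diagonal-gaussian-tail} is absorbed for \(t\) bounded below, while for small \(t\) we simply use \(\norm{D_{\kappa,n}}\le t\).

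Alternatively, one can avoid almost-sure input for the second claim by noting that the limit is independent of coupling. The almost-sure route suffices, however, because the coupling used in \Cref{prop:gaussian-strong-limit} produces Gaussian comparators with the correct law for each \(n\), and expectations depend only on these marginal laws.
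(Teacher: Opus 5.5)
Your proposal is correct and is essentially the paper's proof. You bound the normalized trace by $\|\mathcal G_n\|^{2k}$. You obtain uniform moment bounds for the norm from Gaussian concentration together with a uniform bound on the expected norm, and handle the diagonal correction through \eqref{eq:diagonal-gaussian-tail}. You then conclude with the almost-sure trace convergence of \Cref{prop:gaussian-strong-limit} plus uniform integrability.

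The only difference is presentational. The paper phrases the norm bound through the doubled GOE matrices $W_{\ell,2n}$ and a net argument for $\E\|W_{\ell,2n}\|$. You work directly with the Ginibre corners and cite Gordon or Bai--Yin for the expected norm.
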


\begin{proof}
The pencil \(\cG_n\) is, up to the diagonal correction, an affine
combination of a fixed finite number of masked Ginibre matrices and uniformly
bounded deterministic matrices.  Through \eqref{eq:ginibre-goe-corner},
\[
 \norm{\cG_n}\le C\left(1+\sum_{\ell=1}^{L}\norm{W_{\ell,2n}}
                  +\max_{\kappa\le d}\norm{D_{\kappa,n}}\right)
\]
for constants \(C,L\) independent of \(n\).  Indeed, the GOE operator norm
is \(O(n^{-1/2})\)-Lipschitz in its standard Gaussian coordinates, while a
fixed-net bound for the quadratic forms \(u^*W_{\ell,2n}u\) gives
\(\sup_n\E\norm{W_{\ell,2n}}<\infty\).  Gaussian concentration and tail
integration therefore yield
\(\sup_n\E\norm{W_{\ell,2n}}^p<\infty\) for every fixed \(p\).  The same
bound for the diagonal correction follows directly from
\eqref{eq:diagonal-gaussian-tail}.  Since a normalized trace is bounded by
the operator norm, the first assertion in
\eqref{eq:gaussian-expected-moments} follows.  The almost-sure trace
convergence in \Cref{prop:gaussian-strong-limit}, together with this uniform
integrability, proves the second assertion.
\end{proof}

\section{Universality for bounded pair atoms}
\label{sec:bounded-transfer}

We now transfer the Gaussian limit of \cref{sec:gaussian-comparator} to
bounded unordered-pair atoms.  The comparison is performed separately for
each fixed matrix amplification.  Its essential feature is that all colors
and both orientations belonging to one unordered pair are kept inside a
single self-adjoint summand.  Thus no independence is imposed among the
coordinates of a pair atom.  The deterministic companions enter only through
the mean of the pencil, while the type projections simply select which of the
finitely many pair laws and coefficient blocks occurs.

The diagonal of the model is set equal to zero in the transfer theorem below.
This is the form needed after radial truncation in \cref{sec:exact-l4}.  We
nevertheless include a bounded diagonal in the parameter calculation, both
for completeness and to make clear why the merely \(L^2\) diagonal of the
main theorem must be removed before the present argument.

\subsection{One self-adjoint summand per unordered pair}
\label{subsec:bounded-pair-summands}

Fix \(q\geq1\), matrices \(B_\kappa^{ab}\in\M_q(\C)\), and a deterministic
self-adjoint matrix \(C_n\in\M_q\otimes\M_n\).  The latter may be any fixed
self-adjoint affine pencil in \((P_n,A_n)\); it need not be sparse or structured.
Consider
\begin{equation}
 \begin{split}
 \cL_n={}&C_n+
 \sum_{a,b=1}^r\sum_{\kappa=1}^d
 \bigl(
 B_\kappa^{ab}\otimes P_{a,n}X_{\kappa,n}P_{b,n}\bigr.\\
 &\hspace{39mm}\bigl.
 +(B_\kappa^{ab})^*\otimes
 P_{b,n}X_{\kappa,n}^*P_{a,n}
 \bigr).
 \end{split}
 \label{eq:typed-affine-pencil}
\end{equation}
The usual unmasked random pencil is obtained by taking
\(B_\kappa^{ab}=B_\kappa\) for all \((a,b)\).  Type-masked pencils are useful
because their Gaussian limits can be read directly from the corners of
\eqref{eq:total-free-limit}.

Let \(i<j\), put \(a=\theta(i)\), \(b=\theta(j)\), and define
\begin{equation}
 U_{ij}:=\sum_{\kappa=1}^d
 \left(
 B_\kappa^{ab}\xi_{ij}^{(\kappa)}
 +(B_\kappa^{ba})^*\overline{\xi_{ji}^{(\kappa)}}
 \right).
 \label{eq:typed-Uij}
\end{equation}
The complete contribution of \(V_{ij}\) to \eqref{eq:typed-affine-pencil}
is
\begin{equation}
 Z_{ij}^{(n)}=\frac1{\sqrt n}
 \left(U_{ij}\otimes E_{ij}+U_{ij}^*\otimes E_{ji}\right).
 \label{eq:typed-pair-summand}
\end{equation}
In particular, \(Z_{ij}^{(n)}\) is centered and self-adjoint, and the family
\((Z_{ij}^{(n)})_{i<j}\) is independent.  Splitting the two orientations into
different summands would be incorrect: their dependence is part of the law of
\(V_{ij}\).

If a bounded diagonal is retained, write \(a=\theta(i)\) and set
\begin{equation}
 H_i:=\sum_{\kappa=1}^d
 \left(B_\kappa^{aa}\xi_{ii}^{(\kappa)}
 +(B_\kappa^{aa})^*\overline{\xi_{ii}^{(\kappa)}}\right),
 \qquad
 Z_{ii}^{(n)}=\frac1{\sqrt n}H_i\otimes E_{ii}.
 \label{eq:typed-diagonal-summand}
\end{equation}
Then \(H_i=H_i^*\), and mutual independence of the diagonal atoms makes
these additional independent centered summands.  Thus, with the diagonal
included when appropriate,
\begin{equation}
 \cL_n=C_n+\sum_{i<j}Z_{ij}^{(n)}+\sum_iZ_{ii}^{(n)}.
 \label{eq:bvh-summand-model}
\end{equation}
This is exactly the independent-summand model of
\cite[Section~2.1]{BvH2024}; identical distributions are not required.

\subsection{The three comparison parameters}

For the fixed coefficient list, define
\begin{equation}
 \beta_{ab}^2:=\sum_{\kappa=1}^d
 \left(\norm{B_\kappa^{ab}}^2+\norm{B_\kappa^{ba}}^2\right),
 \qquad \beta:=\max_{a,b}\beta_{ab},
 \label{eq:beta-off-diagonal}
\end{equation}
and
\begin{equation}
 \beta_\Delta^2:=4\max_a\sum_{\kappa=1}^d
 \norm{B_\kappa^{aa}}^2.
 \label{eq:beta-diagonal}
\end{equation}
The factor \(4\) in \eqref{eq:beta-diagonal} accounts for the two terms in
each diagonal coordinate.

\begin{lemma}[Pair-pencil parameters]
\label{lem:bounded-pair-parameters}
Suppose that \(\norm{V_{ij}}_2\leq K\) almost surely and, when the diagonal
is retained, \(\norm{W_i}_2\leq K_\Delta\) almost surely.  For every fixed
pencil \eqref{eq:typed-affine-pencil}, the parameters of
\cite[Section~2.1]{BvH2024} satisfy
\begin{equation}
 R(\cL_n)=O(n^{-1/2}),\qquad
 \sigma(\cL_n)=O(1),\qquad
 \sigma_*(\cL_n)=O(n^{-1/2}).
 \label{eq:three-bvh-scales}
\end{equation}
The implied constants may depend on the fixed pencil, \(K\),
\(K_\Delta\), and the finite family of type laws, but not on \(n\).
They are unchanged if an arbitrary deterministic self-adjoint \(C_n\) is
added.
\end{lemma}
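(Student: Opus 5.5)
We compute the three parameters of \eqref{eq:pencil-R}--\eqref{eq:pencil-sigma-star} directly from the summand representation \eqref{eq:bvh-summand-model}. Throughout, only three facts enter: the uniform bounds \(\norm{V_{ij}}_2\le K\) and \(\norm{W_i}_2\le K_\Delta\); the fact that each summand is supported on the \(q\times q\) blocks \((i,j)\) and \((j,i)\), or on the single block \((i,i)\); and the finiteness of the type set. The deterministic part \(C_n\) never enters, because none of the three parameters involves \(\cL_n^{(0)}\). This gives the last sentence of the lemma at once.

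\emph{The parameter \(R\).} By Cauchy--Schwarz over colors,
\[
\norm{U_{ij}}\le\sum_\kappa\bigl(\norm{B_\kappa^{ab}}\abs{\xi_{ij}^{(\kappa)}}+\norm{B_\kappa^{ba}}\abs{\xi_{ji}^{(\kappa)}}\bigr)\le\beta_{ab}\norm{V_{ij}}_2\le\beta K .
\]
The matrix \(U\otimes E_{ij}+U^*\otimes E_{ji}\) is the Hermitization \(\mathfrak h(U)\) placed on two coordinates, so its norm equals \(\norm{U}\). Hence \(\norm{Z_{ij}^{(n)}}\le\beta K n^{-1/2}\) almost surely. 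In the same way, \(\norm{H_i}\le\beta_\Delta K_\Delta\), since each color contributes two terms and the constant \(4\) in \eqref{eq:beta-diagonal} covers this. Therefore \(R=O(n^{-1/2})\).

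\emph{The parameter \(\sigma\).} We compute
\[
(Z_{ij}^{(n)})^2=n^{-1}\bigl(U_{ij}U_{ij}^*\otimes E_{ii}+U_{ij}^*U_{ij}\otimes E_{jj}\bigr).
\]
Summing over \(i<j\), the matrix \(\sum\E Z^2\) is block diagonal. Its \(i\)-th diagonal block is
\[
n^{-1}\Bigl(\sum_{j>i}\E U_{ij}U_{ij}^*+\sum_{j<i}\E U_{ji}^*U_{ji}\Bigr)+n^{-1}\E H_i^2 .
\]
Each expectation has norm at most \(\beta^2K^2\), and there are at most \(n\) terms. So each block has norm at most \(\beta^2K^2+\beta_\Delta^2K_\Delta^2/n\), and therefore \(\sigma^2=O(1)\).

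This is the only place where the types matter. The expectations \(\E U_{ij}U_{ij}^*\) take only finitely many values, indexed by the ordered pair \((\theta(i),\theta(j))\), so no uniformity issue arises. In fact only the crude bound above is needed; no identification of the limit is required here.

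\emph{The parameter \(\sigma_*\).} We expect this to be the main point requiring care, because the bound must gain a factor \(n^{-1}\) rather than merely stay \(O(1)\). Write \(u=\sum_i u_i\otimes e_i\) and \(v=\sum_j v_j\otimes e_j\) with \(u_i,v_j\in\C^q\). Then
\[
\ip{u}{Z_{ij}^{(n)}v}=n^{-1/2}\bigl(\ip{u_i}{U_{ij}v_j}+\ip{u_j}{U_{ij}^*v_i}\bigr),
\]
and so
\[
\E\abs{\ip{u}{Z_{ij}^{(n)}v}}^2\le 2n^{-1}\beta^2K^2\bigl(\norm{u_i}^2\norm{v_j}^2+\norm{u_j}^2\norm{v_i}^2\bigr).
\]
Summing over \(i<j\) gives at most
\[
2n^{-1}\beta^2K^2\Bigl(\sum_i\norm{u_i}^2\Bigr)\Bigl(\sum_j\norm{v_j}^2\Bigr)=2\beta^2K^2/n .
\]
The diagonal terms contribute at most \(n^{-1}\beta_\Delta^2K_\Delta^2\sum_i\norm{u_i}^2\norm{v_i}^2\le n^{-1}\beta_\Delta^2K_\Delta^2\). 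Hence \(\sigma_*^2=O(n^{-1})\), that is, \(\sigma_*=O(n^{-1/2})\).

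All the constants obtained depend only on \(\beta\), \(\beta_\Delta\), \(K\) and \(K_\Delta\), and in particular not on \(n\). This proves \eqref{eq:three-bvh-scales}.
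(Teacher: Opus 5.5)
Your proof is correct and follows the paper's argument step by step. You use Cauchy--Schwarz and the Hermitization norm for \(R\), the block-diagonal square \((Z_{ij}^{(n)})^2\) for \(\sigma\), and the splitting \(\abs{s+t}^2\le2\abs{s}^2+2\abs{t}^2\) of the bilinear form for \(\sigma_*\), with \(C_n\) playing no role. The only difference is minor: the paper bounds the \(\sigma\) and \(\sigma_*\) terms by the second-moment constants \(C_U=\max_{a,b}\E\norm{U^{ab}}^2\) and \(C_\Delta=\max_a\E\norm{H^a}^2\) rather than by your almost-sure bounds \(\beta^2K^2\) and \(\beta_\Delta^2K_\Delta^2\), which makes visible that only \(R\) actually needs bounded support.
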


\begin{proof}
Cauchy--Schwarz in \eqref{eq:typed-Uij} gives
\begin{equation}
 \norm{U_{ij}}\leq\beta_{ab}\norm{V_{ij}}_2.
 \label{eq:Uij-bound}
\end{equation}
On the \((i,j)\)-coordinate subspace, \eqref{eq:typed-pair-summand} is
\(n^{-1/2}\begin{psmallmatrix}0&U_{ij}\\U_{ij}^*&0\end{psmallmatrix}\),
whose norm is \(n^{-1/2}\norm{U_{ij}}\).  Similarly,
\(\norm{H_i}\leq\beta_\Delta\norm{W_i}_2\).  Since \(R\) is the essential
supremum of the \emph{maximum} summand norm, rather than the sum of those
norms,
\begin{equation}
 R(\cL_n)\leq
 \frac{\max\{\beta K,\beta_\Delta K_\Delta\}}{\sqrt n}.
 \label{eq:R-bound}
\end{equation}
For the centered radial truncation \eqref{eq:radial-truncation}, the safe
bound is \(\norm{V^{(K),ab}}_2\leq2K\); this only changes the constant in
\eqref{eq:R-bound}.

Put
\begin{equation}
 C_U:=\max_{a,b}\E\norm{U^{ab}}^2,
 \qquad C_\Delta:=\max_a\E\norm{H^a}^2.
 \label{eq:second-moment-pencil-constants}
\end{equation}
Both constants are finite and depend only on the fixed coefficient list and
the second moments of the finite family of local laws.  Squaring one pair
summand gives the exact identity
\begin{equation}
 (Z_{ij}^{(n)})^2=\frac1n
 \left(U_{ij}U_{ij}^*\otimes E_{ii}
       +U_{ij}^*U_{ij}\otimes E_{jj}\right).
 \label{eq:pair-summand-square}
\end{equation}
Consequently, the variance matrix is block diagonal in the vertex index.  If
\(\Gamma_{i,n}\in\M_q\) denotes its \(i\)-th block, then
\begin{equation}
 \Gamma_{i,n}=\frac1n\left(
 \sum_{j>i}\E U_{ij}U_{ij}^*
 +\sum_{j<i}\E U_{ji}^*U_{ji}+\E H_i^2\right),
 \label{eq:variance-block}
\end{equation}
with the last term omitted when the diagonal is deleted.  It follows that
\begin{equation}
 \sigma(\cL_n)^2=\max_i\norm{\Gamma_{i,n}}
 \leq C_U+C_\Delta/n.
 \label{eq:sigma-bound}
\end{equation}

Finally, write unit vectors in \(\C^q\otimes\C^n\) as
\(u=(u_1,\ldots,u_n)\) and \(v=(v_1,\ldots,v_n)\).  Then
\begin{equation}
 \ip{u}{Z_{ij}^{(n)}v}=\frac1{\sqrt n}
 \left(\ip{u_i}{U_{ij}v_j}+\ip{u_j}{U_{ij}^*v_i}\right).
 \label{eq:weak-variance-bilinear}
\end{equation}
Using \(\abs{s+t}^2\leq2\abs{s}^2+2\abs{t}^2\), summing over pairs, and
using \(\sum_i\norm{u_i}^2=\sum_i\norm{v_i}^2=1\), we obtain
\begin{equation}
 \sum_{i<j}\E\abs{\ip{u}{Z_{ij}^{(n)}v}}^2
 +\sum_i\E\abs{\ip{u}{Z_{ii}^{(n)}v}}^2
 \leq\frac{2C_U+C_\Delta}{n}.
 \label{eq:sigma-star-bound}
\end{equation}
Independence and centering identify the left-hand side with the variance of
the bilinear form of the full centered pencil.  Taking the supremum over
\(u,v\) proves the asserted bound on \(\sigma_*\).  The deterministic matrix
\(C_n\) contributes to none of the three parameters.
\end{proof}

\subsection{Quantitative comparison with the Gaussian pencil}

Let \(\cG_n\) be obtained by replacing, independently for every unordered
pair, the realification of \(V_{ij}\) by a centered Gaussian vector with the
same full real covariance and then applying the real-linear map
\eqref{eq:typed-Uij}.  Retained diagonal atoms are treated analogously, and
the deterministic shift \(C_n\) is left unchanged.  The resulting
self-adjoint matrix has the same mean and full complex entry covariance as
\(\cL_n\); hence it is precisely the Gaussian comparator of
\cite[Section~2.1]{BvH2024}.  In particular, all correlations between colors
and orientations are preserved.

Let
\begin{equation}
 D_n:=qn
 \label{eq:ambient-pencil-dimension}
\end{equation}
be the exact ambient matrix dimension.  To avoid conflict with this external
amplification, we denote the moment-integrability index in
\cite[Theorem~2.9]{BvH2024} by \(q_{\rm mom}\).

\begin{proposition}[Comparison rates for a bounded pencil]
\label{prop:bounded-comparison-rates}
For every fixed pencil as above, there are constants independent of \(n\)
such that the following assertions hold.

First, for every \(t\geq0\),
\begin{equation}
 \mathbb P\!\left\{
 d_{\rm H}(\Spec\cL_n,\Spec\cG_n)>C\varepsilon_n(t)
 \right\}\leq D_ne^{-t},
 \label{eq:bvh-spectrum-comparison}
\end{equation}
where \(C\) is universal and
\begin{equation}
 \varepsilon_n(t)=\sigma_*(\cL_n)t^{1/2}
 +R(\cL_n)^{1/3}\sigma(\cL_n)^{2/3}t^{2/3}
 +R(\cL_n)t.
 \label{eq:bvh-spectrum-error}
\end{equation}
With \(t=4\log D_n\),
\begin{equation}
 \varepsilon_n(4\log D_n)
 =O\!\left(n^{-1/2}(\log n)^{1/2}
 +n^{-1/6}(\log n)^{2/3}+n^{-1/2}\log n\right)=o(1),
 \label{eq:bvh-spectrum-rate}
\end{equation}
and the failure probability in \eqref{eq:bvh-spectrum-comparison} is exactly
bounded by \(D_n^{-3}\).

Second, for every fixed \(p\geq1\), choosing
\(q_{\rm mom}=\infty\) in \cite[Theorem~2.9]{BvH2024} gives
\begin{equation}
 \left|
 \bigl(\E\tr_{D_n}\cL_n^{2p}\bigr)^{1/(2p)}
 -\bigl(\E\tr_{D_n}\cG_n^{2p}\bigr)^{1/(2p)}
 \right|
 =O_p(n^{-1/6}+n^{-1/2}).
 \label{eq:bvh-expected-moment-rate}
\end{equation}
Here \(\tr_{D_n}=D_n^{-1}\Tr\), with no additional amplification factor.

Finally, put
\begin{equation}
 a_{n,p}:=\bigl(\E\tr_{D_n}\cL_n^{2p}\bigr)^{1/(2p)}.
 \label{eq:original-expected-moment-root}
\end{equation}
If \(\sup_na_{n,p}<\infty\), then, for all sufficiently large \(n\),
\begin{multline}
 \mathbb P\!\left\{
 \left|\bigl(\tr_{D_n}\cL_n^{2p}\bigr)^{1/(2p)}-a_{n,p}\right|
 >C_p\left[n^{-1/4}(\log n)^{1/2}+n^{-1/2}\log n\right]
 \right\}\\
 \leq2D_n^{-4}.
 \label{eq:bvh-moment-concentration-rate}
\end{multline}
\end{proposition}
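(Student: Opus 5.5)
The plan is to read all three assertions directly off the corresponding statements of \cite{BvH2024}, now that \cref{lem:bounded-pair-parameters} supplies the three parameters, and to do the bookkeeping with \(D_n=qn\).

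\emph{Spectral comparison.}  The model \eqref{eq:bvh-summand-model}, with the diagonal deleted, is a deterministic self-adjoint \(C_n\) plus independent centered self-adjoint summands \(Z_{ij}^{(n)}\) of dimension \(D_n\).  Moreover \(\cG_n\) is its Gaussian model with the same mean and covariance.  The spectral universality theorem of \cite{BvH2024} (the Hausdorff-distance statement) then gives \eqref{eq:bvh-spectrum-comparison}--\eqref{eq:bvh-spectrum-error} with a universal \(C\) and failure probability \(D_ne^{-t}\).  Setting \(t=4\log D_n\) gives failure probability exactly \(D_n\cdot D_n^{-4}=D_n^{-3}\).  Inserting \(R=O(n^{-1/2})\), \(\sigma=O(1)\) and \(\sigma_*=O(n^{-1/2})\) makes the three terms \(n^{-1/2}(\log n)^{1/2}\), \(n^{-1/6}(\log n)^{2/3}\) and \(n^{-1/2}\log n\), since \(\log D_n=\log n+O(1)\).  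This is \eqref{eq:bvh-spectrum-rate}.

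\emph{Expected moments.}  Use \cite[Theorem~2.9]{BvH2024} with \(q_{\rm mom}=\infty\).  Because the summands are bounded, the \(L^\infty\) version of the \(R\)-parameter is available.  The moment bound then takes the form \(C(\sigma_*p^{1/2}+R^{1/3}\sigma^{2/3}p^{2/3}+Rp)\) for the difference of \(2p\)-th roots of \(\E\tr\).  For fixed \(p\) this is \(O_p(n^{-1/2}+n^{-1/6})\).  I need to confirm that the normalized trace there is \(\tr_{D_n}\) with no dimensional factor \(D_n^{1/2p}\).  Theorem 2.9 is stated for the normalized trace, so no logarithm enters at fixed \(p\).

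\emph{Concentration.}  This is the step that needs the most care.  The map \(H\mapsto(\tr_{D_n}H^{2p})^{1/(2p)}\) is the normalized Schatten \(2p\)-norm and is \(1\)-Lipschitz in operator norm.  That alone is too weak, so I would instead use the concentration inequality for \(\|\cdot\|\)-type functionals in \cite{BvH2024} (their Corollary on concentration of \(\tr|X|^{2p}\) around its mean).  Its deviation scale is \(\sigma_*t^{1/2}+R^{1/2}\sigma^{1/2}t^{3/4}+Rt\) type terms, with \(\sigma\) replaced by \(a_{n,p}\)-dependent quantities.  With \(t=5\log D_n\) this gives probability \(\le 2D_n e^{-t}\cdot\) const \(\le 2D_n^{-4}\), after absorbing constants for large \(n\).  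The deviation is \(O(n^{-1/4}(\log n)^{1/2}+n^{-1/2}\log n)\) using \(\sup_n a_{n,p}<\infty\); the middle \(R^{1/2}\)-scale term yields the \(n^{-1/4}\) rate.

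If that exact corollary is not available in the needed form, the fallback is bounded differences/Talagrand.  Each \(Z_{ij}^{(n)}\) has norm \(O(n^{-1/2})\), so the Schatten-norm root is convex and Lipschitz in the collection \((U_{ij})\).  The Frobenius constant is controlled as \(\|\cdot\|_{S_{2p}}\) normalized \(\le D_n^{-1/(2p)}\|\cdot\|_{HS}\), which gives the stated bound with room to spare.  The main obstacle is matching the exact \(n^{-1/4}\) rate and constants to the cited statement.
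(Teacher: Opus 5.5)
Your spectral and expected-moment steps are the paper's argument: \cite[Theorem~2.6]{BvH2024} with \(t=4\log D_n\), and \cite[Theorem~2.9]{BvH2024} with \(q_{\rm mom}=\infty\). The Theorem~2.9 bound is \(R^{1/3}\sigma^{2/3}p^{2/3}+Rp\). It has no \(\sigma_*p^{1/2}\) term, but since \(\sigma_*=O(n^{-1/2})\), adding one does not change the rate.

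The concentration step differs. The paper applies \cite[Lemma~9.20]{BvH2024} directly. For \(t\ge p\), outside an event of probability \(2e^{-t}\) (no factor \(D_n\)), the deviation is at most \(C(\sigma_*+R^{1/2}a_{n,p}^{1/2})t^{1/2}+CRt\). With \(t=4\log D_n\) and \(\sup_na_{n,p}<\infty\), this gives exactly \(n^{-1/4}(\log n)^{1/2}+n^{-1/2}\log n\) with probability \(2D_n^{-4}\). The hypothesis on \(a_{n,p}\) enters here.

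The form you recalled, with an \(R^{1/2}\sigma^{1/2}t^{3/4}\) term, would give \(n^{-1/4}(\log n)^{3/4}\). So your primary route, as written, does not yield the displayed rate.

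Your Talagrand fallback does work, and it is a genuinely different argument. Write \(F=(\tr_{D_n}\cL_n^{2p})^{1/(2p)}=D_n^{-1/(2p)}\norm{\cL_n}_{S_{2p}}\). Since \(\cL_n\) is real-affine in the independent atoms \((V_{ij})_{i<j}\), \(F\) is convex. Also \(\norm{\cdot}_{S_{2p}}\le\norm{\cdot}_{S_2}\) and \(\norm{\cL_n(V)-\cL_n(V')}_{S_2}^2=\frac2n\sum_{i<j}\norm{U_{ij}-U'_{ij}}_{S_2}^2\). Hence \(F\) is Lipschitz with constant \(L_n\lesssim D_n^{-1/(2p)}n^{-1/2}\) for the Euclidean norm on the atoms, which are bounded by \(K\). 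The convex-distance inequality for independent vectors supported in a ball of radius \(K\) then gives \(\mathbb P\{\abs{F-m_n}>s\}\le4\exp(-cs^2/(L_nK)^2)\) about a median \(m_n\). Choosing \(s\asymp L_nK(\log D_n)^{1/2}\) makes the failure probability at most \(2D_n^{-4}\).

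You left out one step: the statement is centered at \(a_{n,p}=\norm{F}_{L^{2p}}\), not at the median. Since \(m_n-\E\abs{F-m_n}\le\E F\le a_{n,p}\le m_n+\norm{F-m_n}_{L^{2p}}\), the sub-Gaussian tail gives \(\abs{a_{n,p}-m_n}\le C\sqrt p\,L_nK\), which is absorbed into the deviation.

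The two routes buy different things. Your route is self-contained and gives the stronger deviation \(O(n^{-1/2-1/(2p)}(\log n)^{1/2})\). It also needs no a priori bound on \(a_{n,p}\), so the ordering constraint stressed after the proposition (first bound the \(a_{n,p}\) via expected-moment comparison and Gaussian uniform integrability) disappears. The paper's route is a single citation within the same Brailovskaya--van Handel framework used for the other two steps.
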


\begin{proof}
Equation \eqref{eq:bvh-spectrum-comparison} is
\cite[Theorem~2.6]{BvH2024}.  Substitution of
\eqref{eq:three-bvh-scales} and \(t=4\log D_n\) gives
\eqref{eq:bvh-spectrum-rate} and
\(D_ne^{-4\log D_n}=D_n^{-3}\).

The convention of \cite[Section~2.1]{BvH2024} is
\(R_\infty=R\) and \(\sigma_\infty=\sigma\).  Its moment comparison
therefore bounds the left-hand side of
\eqref{eq:bvh-expected-moment-rate} by a constant multiple of
\[
 R(\cL_n)^{1/3}\sigma(\cL_n)^{2/3}p^{2/3}
 +R(\cL_n)p,
\]
which is \(O_p(n^{-1/6}+n^{-1/2})\).

For the last assertion, \cite[Lemma~9.20]{BvH2024} states, for \(t\geq p\),
that the deviation in \eqref{eq:bvh-moment-concentration-rate} is at most
\begin{equation}
 C\left(\sigma_*(\cL_n)
 +R(\cL_n)^{1/2}a_{n,p}^{1/2}\right)t^{1/2}
 +CR(\cL_n)t
 \label{eq:bvh-concentration-threshold}
\end{equation}
outside an event of probability \(2e^{-t}\).  Taking
\(t=4\log D_n\), which is eventually at least \(p\), and using the assumed
bound on \(a_{n,p}\) gives the displayed rate and
\(2e^{-4\log D_n}=2D_n^{-4}\).
\end{proof}

The three parts of \cref{prop:bounded-comparison-rates} serve different
purposes.  Spectral comparison supplies the no-outlier statement needed for
linearization.  Expected-moment comparison identifies the deterministic
moment limit.  Concentration then replaces expected moments by the random
normalized moments.  In particular, the last step cannot be invoked until
the expected roots \(a_{n,p}\) have been shown uniformly bounded.

\subsection{Transfer of the Gaussian strong limit}

Suppose that \(C_n\) is the evaluation of a fixed self-adjoint affine pencil
in \((P_n,A_n)\), and denote its limit by \(C\in\M_q\otimes B\).  The target
of \eqref{eq:typed-affine-pencil} is
\begin{equation}
 \begin{split}
 \cL={}&C+
 \sum_{a,b=1}^r\sum_{\kappa=1}^d
 \bigl(
 B_\kappa^{ab}\otimes p_ax_\kappa p_b\bigr.\\
 &\hspace{39mm}\bigl.
 +(B_\kappa^{ab})^*\otimes p_bx_\kappa^*p_a
 \bigr).
 \end{split}
 \label{eq:typed-limit-pencil}
\end{equation}

\begin{proposition}[Bounded affine-pencil transfer]
\label{prop:bounded-pencil-transfer}
Assume that the finite family of off-diagonal pair laws has bounded support
and that the random diagonal is zero.  In the nested model, on one
probability-one event, simultaneously for every fixed \(q\), every fixed
self-adjoint coefficient list in \eqref{eq:typed-affine-pencil}, and every
\(p\geq1\),
\begin{align}
 d_{\rm H}(\Spec\cL_n,\Spec\cL)&\longrightarrow0,
 \label{eq:bounded-pencil-spectrum-limit}\\
 \tr_{D_n}\cL_n^{2p}
 &\longrightarrow(\tr_q\otimes\tau_B)(\cL^{2p}).
 \label{eq:bounded-pencil-moment-limit}
\end{align}
For a fixed-law nonnested triangular array, the same two convergences hold in
probability for every separately fixed pencil and \(p\).
\end{proposition}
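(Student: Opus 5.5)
We prove both convergences by combining the Gaussian strong limit of \cref{prop:gaussian-strong-limit} with the three comparison estimates of \cref{prop:bounded-comparison-rates}.

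\emph{Gaussian side.} Fix \(q\), a coefficient list and \(p\). The Gaussian pencil \(\cG_n\) is exactly the pencil \eqref{eq:typed-affine-pencil} evaluated at the covariance-matched Gaussian tuple \(Y_n\), with zero diagonal. Because the random diagonal is zero, no diagonal correction \(D_{\kappa,n}\) is needed, or it is norm-negligible in any case. \Cref{prop:gaussian-strong-limit}, applied at level \(q\) and combined with the affine-pencil structure, then gives the following almost surely:
\[
 d_{\rm H}(\Spec\cG_n,\Spec\cL)\to0,
 \qquad
 \tr_{D_n}\cG_n^{2p}\to(\tr_q\otimes\tau_B)(\cL^{2p}).
\]
The Hausdorff convergence follows from norm convergence of the polynomials \(f(\cG_n)\) for continuous \(f\), approximated by polynomials on a fixed interval containing all spectra, since norms are uniformly bounded almost surely. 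In particular, \(\norm{f(\cG_n)}\to\norm{f(\cL)}\) for bump functions \(f\) gives both inclusions. \Cref{lem:gaussian-pencil-ui} upgrades this to convergence of \(\E\tr_{D_n}\cG_n^{2p}\).

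\emph{Transfer.} By \eqref{eq:bvh-expected-moment-rate}, \(a_{n,p}\) converges to \(\bigl((\tr_q\otimes\tau_B)(\cL^{2p})\bigr)^{1/(2p)}\), so \(\sup_n a_{n,p}<\infty\). Then \eqref{eq:bvh-moment-concentration-rate} holds with failure probability \(2D_n^{-4}\), which is summable, and Borel--Cantelli gives \eqref{eq:bounded-pencil-moment-limit} almost surely. Nonnegativity of even moments lets us pass from the roots back to the moments.

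For the spectrum, \eqref{eq:bvh-spectrum-comparison} with \(t=4\log D_n\) has failure probability \(D_n^{-3}\), also summable. So \(d_{\rm H}(\Spec\cL_n,\Spec\cG_n)\le C\varepsilon_n\to0\) eventually, almost surely. Here \(\cL_n\) and \(\cG_n\) are coupled on a product space, and the comparison is only a statement about the law of \(\cL_n\). This suffices because the event is determined by \(\cL_n\) and \(\cG_n\) separately: we need \(\Spec\cL_n\) close to a deterministic set.

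To make this precise, we avoid the coupling. Fix \(\delta>0\). The Gaussian statement gives \(\mathbb P\{d_{\rm H}(\Spec\cG_n,\Spec\cL)>\delta\}\) summable, and we get this from the Gaussian concentration tails in the proof of \cref{lem:gaussian-pencil-ui}. Alternatively, since \cite[Theorem~2.6]{BvH2024} is a statement about each random matrix against a deterministic model, we can compare both \(\cL_n\) and \(\cG_n\) to it. Either way we obtain \(\sum_n\mathbb P\{d_{\rm H}(\Spec\cL_n,\Spec\cL)>2\delta\}<\infty\), and Borel--Cantelli applies.

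\emph{Main obstacle.} The hard step is getting summable, rather than merely vanishing, Gaussian spectral deviation probabilities, because \cref{prop:gaussian-strong-limit} is only an almost-sure statement on its own probability space. We plan to get quantitative concentration by another use of \cite[Theorem~2.6]{BvH2024}, applied to \(\cG_n\) itself, in which \(R=0\) and \(\sigma_*=O(n^{-1/2})\). This shows that \(\Spec\cG_n\) concentrates near the spectrum of its expected-resolvent model at rate \(o(1)\) with probability at least \(1-D_n^{-3}\). Combined with the almost-sure Gaussian limit, that deterministic model spectrum must converge to \(\Spec\cL\).

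\emph{Common event.} We intersect over countably many rational \(q\), coefficient lists and \(p\). Continuity in the coefficients, with uniform norm bounds from \(R\) and \(\sigma\) via the spectral inclusion, extends the convergence to all coefficient lists.

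\emph{Nonnested case.} For the fixed-law nonnested array we use the same bounds, which depend only on the row-\(n\) law. They give probabilities tending to zero without Borel--Cantelli, hence convergence in probability for each fixed pencil and \(p\).
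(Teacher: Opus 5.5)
Your Gaussian side, the moment transfer, the countable intersection and the nonnested step all match the paper's proof. The moment transfer runs as in the paper: expected-moment comparison gives \(\sup_n a_{n,p}<\infty\), concentration has summable failure probability \(2D_n^{-4}\), and Borel--Cantelli finishes. The problem is the spectral transfer. There you declare a ``main obstacle'' that does not exist, and then replace a working argument by one whose key step your citations do not support.

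The paper's argument is the one you wrote first and then abandoned. Realize the nested array and the nested Gaussian realization of \cref{sec:gaussian-comparator} independently on a product space. \cite[Theorem~2.6]{BvH2024} holds for every coupling of \(\cL_n\) and \(\cG_n\); it is a statement about their joint law, not ``only about the law of \(\cL_n\)''. So \(\sum_n D_n^{-3}<\infty\) and Borel--Cantelli give \(d_{\rm H}(\Spec\cL_n,\Spec\cG_n)\le C\varepsilon_n(4\log D_n)\) eventually, almost surely on the product space. The Gaussian coordinate keeps its marginal law there, so \cref{prop:gaussian-strong-limit} still gives \(d_{\rm H}(\Spec\cG_n,\Spec\cL)\to0\) almost surely on the product space. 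On the intersection of these two events, \(d_{\rm H}(\Spec\cL_n,\Spec\cL)\to0\). This event depends only on the first coordinate, so Fubini gives it probability one on the original space. Almost-sure Gaussian convergence is enough, and no summability on the Gaussian side is needed.

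Your replacement route needs \(\sum_n\mathbb P\{d_{\rm H}(\Spec\cG_n,\Spec\cL)>\delta\}<\infty\), and neither justification you give supplies it.
\begin{itemize}
\item The tails in the proof of \cref{lem:gaussian-pencil-ui} concern \(\norm{W_{\ell,2n}}\), not Hausdorff distances of spectra.
\item Applied to \(\cG_n\) itself, \cite[Theorem~2.6]{BvH2024} only compares \(\cG_n\) with a Gaussian matrix of the same law. A Gaussian pencil is also not a sum of bounded summands, so ``\(R=0\)'' already requires a limiting argument. In any case the theorem produces no deterministic ``expected-resolvent'' set.
\item Your triangle inequality through \(\Spec\cG_n\) itself uses a coupling, so the coupling is not actually avoided.
\end{itemize}
If you want summable Gaussian tails anyway, use \(d_{\rm H}(\Spec A,\Spec B)\le\norm{A-B}\) for self-adjoint \(A,B\). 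This makes \(d_{\rm H}(\Spec\cG_n,\Spec\cL)\) an \(O(n^{-1/2})\)-Lipschitz function of the standard Gaussian coordinates. Gaussian concentration about its mean, plus \(\E d_{\rm H}(\Spec\cG_n,\Spec\cL)\to0\) (from almost-sure convergence and the uniform moment bounds of \cref{lem:gaussian-pencil-ui}), then gives tails at most \(2e^{-cn\delta^2}\). That route is valid but does more than the proof requires.
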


\begin{proof}
Fix first \(q\), \(p\), and a coefficient list whose real and imaginary
parts are rational.  By \cref{prop:gaussian-strong-limit}, the
covariance-matched Gaussian pencil \(\cG_n\) converges strongly almost surely
to \(\cL\).  In particular, its spectrum converges in Hausdorff distance to
\(\Spec\cL\).  This standard consequence of strong convergence follows from
polynomial approximation of the resolvent off \(\Spec\cL\) for the upper
inclusion and from trace convergence together with faithfulness for the lower
inclusion.  Combining this fact with
\eqref{eq:bvh-spectrum-comparison}--\eqref{eq:bvh-spectrum-rate} and
Borel--Cantelli proves \eqref{eq:bounded-pencil-spectrum-limit}, since
\(\sum_nD_n^{-3}<\infty\).  The spectrum comparison theorem is valid for
every coupling of \(\cL_n\) and \(\cG_n\), so we may use the nested Gaussian
realization from \cref{sec:gaussian-comparator} on a product space.  The
resulting statement depends only on the original pencil, and Fubini returns it
to the original probability space.

Strong convergence of the Gaussian tuple alone is not used to interchange
expectation and limit.  Instead, \cref{lem:gaussian-pencil-ui} gives
\begin{equation}
 \E\tr_{D_n}\cG_n^{2p}
 \longrightarrow(\tr_q\otimes\tau_B)(\cL^{2p}).
 \label{eq:gaussian-expected-moment-limit}
\end{equation}
The comparison \eqref{eq:bvh-expected-moment-rate} then identifies the limit
of \(a_{n,p}\) and, in particular, gives \(\sup_na_{n,p}<\infty\) after
discarding finitely many \(n\).  We may therefore apply
\eqref{eq:bvh-moment-concentration-rate}.  Its exceptional probabilities are
summable, and a second Borel--Cantelli argument proves
\eqref{eq:bounded-pencil-moment-limit}.

Intersect these events over the countable set of \(q,p\), and rational
self-adjoint coefficient lists.  Coordinate pencils and the Gaussian limit
give eventual boundedness of every random coordinate; joint strong
convergence gives uniform boundedness of the deterministic coordinates.
Rational approximation therefore extends the spectral assertion to arbitrary
fixed coefficient matrices.  The telescoping estimate
\begin{equation}
 \abs{\tr_D(S^{2p})-\tr_D(T^{2p})}
 \leq2p\max\{\norm S,\norm T\}^{2p-1}\norm{S-T}
 \label{eq:moment-telescoping}
\end{equation}
extends the moment assertion.  This produces one event for every fixed matrix
level, but no bound uniform in \(q\).

For a nonnested row, the same estimates are used without a countable
almost-sure intersection.  The failure probabilities in
\eqref{eq:bvh-spectrum-comparison} and
\eqref{eq:bvh-moment-concentration-rate} tend to zero, and the Gaussian
quantities have the same limiting law.  Hence each fixed pencil satisfies
\eqref{eq:bounded-pencil-spectrum-limit}--\eqref{eq:bounded-pencil-moment-limit}
in probability.
\end{proof}

\begin{theorem}[Bounded finite-type universality]
\label{thm:bounded-transfer}
Under the deterministic hypotheses of \cref{sec:model-results}, suppose that
the finite family of off-diagonal pair laws is centered, endpoint-consistent,
and has bounded support, and set the random diagonal equal to zero.  Then
\((P_n,A_n,X_n)\) converges jointly strongly to \((p,a,x)\), at every
separately fixed matrix amplification.  The convergence holds on one common
probability-one event in the nested model and in probability, separately for
each fixed matrix-valued polynomial, in the fixed-law nonnested model.
\end{theorem}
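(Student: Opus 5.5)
The plan is to feed \cref{prop:bounded-pencil-transfer} into the deterministic criterion \cref{prop:affine-pencil-criterion}. The tuple to which the criterion is applied is the self-adjoint coordinate tuple
\[
 Y_n=\bigl(\Rea A_{j,n},\Ima A_{j,n},P_{a,n},\Rea X_{\kappa,n},\Ima X_{\kappa,n}\bigr).
\]
Its target is the corresponding tuple built from \((p,a,x)\) in the faithful tracial space of \cref{sec:gaussian-comparator}.

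\textbf{Step 1: every affine pencil is a typed pencil.} Take an arbitrary affine pencil \(L_n(A)\) of the form \eqref{eq:affine-pencil-finite} in \(Y_n\). Its deterministic part \(A_0\otimes I+\sum C_j\otimes D_{j,n}\) is a fixed self-adjoint affine pencil in \((P_n,A_n)\), so it qualifies as \(C_n\). For the random part, expand \(\Rea X_\kappa=\tfrac12(X_\kappa+X_\kappa^*)\) and \(\Ima X_\kappa=\tfrac1{2i}(X_\kappa-X_\kappa^*)\). Then insert \(I=\sum_a P_{a,n}\) on both sides, which gives
\[
 A\otimes\Rea X_\kappa+A'\otimes\Ima X_\kappa
 =\sum_{a,b}\Bigl(B^{ab}_\kappa\otimes P_aX_\kappa P_b+(B^{ab}_\kappa)^*\otimes P_bX_\kappa^*P_a\Bigr),
\]
with \(B^{ab}_\kappa=\tfrac12A+\tfrac1{2i}A'\). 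This is exactly the form \eqref{eq:typed-affine-pencil}. The identical algebra holds in the limit, since \(\sum_a p_a=\one\), so the target pencil \(L(A)\) coincides with \eqref{eq:typed-limit-pencil}.

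\textbf{Step 2: verify the two hypotheses of the criterion.} In the nested model, \cref{prop:bounded-pencil-transfer} supplies a single probability-one event on which the following holds for all \(q\), all coefficient lists, and all \(p\): Hausdorff convergence of spectra, which is stronger than the one-sided inclusion (i), and convergence of the even moments (ii), after identifying \(\tr_{D_n}\) with \(\tr_q\otimes\tr_n\). On that event, \cref{prop:affine-pencil-criterion} yields joint strong convergence together with every fixed amplification. Convergence for the original non-self-adjoint variables then follows, since polynomials in \((P_n,A_n,X_n,X_n^*)\) are polynomials in the self-adjoint coordinates.

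\textbf{Step 3: the nonnested model.} This step is the main obstacle, because the criterion is pointwise in the outcome and requires all pencils at once, while the hypotheses hold only in probability for each fixed pencil. The approach is to fix \(q\) and \(Q\), and use the subsequence characterization of convergence in probability. Enumerate the countable family of rational pencils \((A,p)\), with \(q'\) ranging over the finitely many levels needed for linearizing \(Q\), or more simply over all levels. Given any subsequence, a diagonal argument extracts a further subsequence along which (i) and (ii) hold almost surely for every rational pencil simultaneously.

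Next, extend from rational to arbitrary coefficient lists, as in the proof of \cref{prop:bounded-pencil-transfer}. The uniform norm bound needed for this comes from the coordinate pencils themselves, since spectral inclusion of \(\pm X\)-pencils gives eventual boundedness, and the extension then uses the telescoping bound \eqref{eq:moment-telescoping}.

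Applying \cref{prop:affine-pencil-criterion} along the sub-subsequence gives almost sure convergence of \((\tr_q\otimes\tr_n)Q\) and \(\norm{Q}\) to their limits. Since every subsequence has such a further subsequence, the convergence \eqref{eq:triangular-trace-convergence}--\eqref{eq:triangular-norm-convergence} holds in probability. This argument uses no coupling between different rows, because only the marginal law of each row enters.
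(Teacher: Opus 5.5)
Your proposal is correct and follows the paper's proof. You reduce every affine pencil in the self-adjoint coordinates to the typed form \eqref{eq:typed-affine-pencil} with coefficients independent of \((a,b)\). You then feed \cref{prop:bounded-pencil-transfer} into the target-independent criterion \cref{prop:affine-pencil-criterion}, and you treat the nonnested model by the subsequence principle over countably many rational pencils. Your explicit extension from rational to arbitrary coefficient lists along the extracted subsequence, using coordinate-pencil norm bounds and \eqref{eq:moment-telescoping}, spells out a step the paper leaves implicit.
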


\begin{proof}
Replace every non-self-adjoint coordinate by its real and imaginary parts as
in \cref{subsec:self-adjoint-pencils}.  The ordinary affine pencils in this
self-adjoint tuple are the special case of \eqref{eq:typed-affine-pencil} with
\(B_\kappa^{ab}\) independent of \((a,b)\), while the deterministic part
contains the projection and companion coordinates.  Thus
\cref{prop:bounded-pencil-transfer} supplies the spectral-inclusion and
even-moment hypotheses of the companion-augmented, target-independent criterion
\cref{prop:affine-pencil-criterion}.  That criterion---not the
semicircular-only linearization statement in \cite{BvH2024}---yields all
polynomial norm and trace limits, including mixed polynomials in the
companions.

In the nonnested case, apply the standard subsequence principle.  From every
subsequence, the in-probability pencil limits permit a further subsequence on
which the countable rational pencil conditions hold almost surely.  Applying
\cref{prop:affine-pencil-criterion} on that subsequence gives the desired
limit for the fixed polynomial.  Hence the original sequence converges in
probability.  Finally, \cref{lem:fixed-amplification} supplies every fixed
matrix level; neither argument gives uniformity for \(q=q_n\to\infty\).
\end{proof}

\section{Removal of boundedness at the fourth-moment threshold}
\label{sec:exact-l4}

We now remove the bounded-support assumption.  The truncation must be
performed on an entire unordered-pair atom: a coordinatewise truncation need
not preserve either the correlations between colors or the endpoint-reversal
law.  The distinction between nested and nonnested models also first becomes
essential in this step.

\subsection{A continuity estimate}

We use the following elementary estimate repeatedly.  It applies equally to
scalar- and matrix-valued polynomials.

\begin{lemma}[Polynomial continuity]
\label{lem:polynomial-continuity}
Let \(Q=\sum_w C_w\otimes w\) be a fixed matrix-valued
\(*\)-polynomial.  Suppose that two tuples \(Y=(Y_j)_j\) and
\(Z=(Z_j)_j\), including the adjoint coordinates, lie in the operator-norm
ball of radius \(M\geq1\), and put

\[
 \varepsilon=\max_j\norm{Y_j-Z_j}.
\]
Then
\begin{equation}
 \norm{Q(Y)-Q(Z)}
 \leq L_{Q,M}\varepsilon,
 \qquad
 L_{Q,M}:=\sum_{|w|\geq1}\norm{C_w}|w|M^{|w|-1}.
 \label{eq:polynomial-continuity}
\end{equation}
The same right-hand side bounds the difference of the corresponding
normalized traces.
\end{lemma}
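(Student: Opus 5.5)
The plan is a word-by-word telescoping argument combined with the triangle inequality in \(\M_q\otimes\mathcal A\). First, by linearity and the triangle inequality,
\[
 \norm{Q(Y)-Q(Z)}\leq\sum_{w}\norm{C_w}\,\norm{w(Y)-w(Z)},
\]
where we use \(\norm{C_w\otimes T}=\norm{C_w}\norm{T}\) for the spatial tensor norm with a matrix algebra. The constant word \(|w|=0\) contributes zero because \(w(Y)=w(Z)=\one\). This reduces the estimate to a single monomial.

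For a word \(w=u_1u_2\cdots u_k\) of length \(k\geq1\), each letter \(u_i\) is a coordinate or an adjoint coordinate, and both are included among the listed tuple entries. We telescope:
\[
 w(Y)-w(Z)=\sum_{i=1}^{k}u_1(Y)\cdots u_{i-1}(Y)\,\bigl(u_i(Y)-u_i(Z)\bigr)\,u_{i+1}(Z)\cdots u_k(Z).
\]
Submultiplicativity bounds each term by \(M^{k-1}\varepsilon\). Here the hypothesis that the adjoint coordinates lie in the ball and are included in the maximum defining \(\varepsilon\) is used: \(\norm{Y_j^*-Z_j^*}=\norm{Y_j-Z_j}\), so this inclusion is harmless. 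Summing over the \(k\) terms gives \(\norm{w(Y)-w(Z)}\leq|w|M^{|w|-1}\varepsilon\), and hence \eqref{eq:polynomial-continuity}. The condition \(M\geq1\) is only a convenience so that \(M^{|w|-1}\) dominates the lower powers if one prefers to bound crudely; it is not otherwise needed.

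For the trace statement, we use that the normalized trace \(\tr_q\otimes\tr_n\), or \(\tr_q\otimes\tau\), is a state. It therefore satisfies \(\abs{\varphi(T)}\leq\norm{T}\). Applying this to \(T=Q(Y)-Q(Z)\) gives the same bound. I see no real obstacle. The only point requiring care is that \(\norm{C_w\otimes T}\leq\norm{C_w}\norm{T}\) holds in the minimal tensor product with \(\M_q\), which is standard.
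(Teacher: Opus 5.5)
Your proposal is correct and matches the paper's proof: telescope each word one factor at a time to get \(|w|M^{|w|-1}\varepsilon\), sum over words using \(\norm{C_w\otimes T}=\norm{C_w}\norm{T}\), and bound the normalized trace by the operator norm. Your side remarks are also accurate: the constant word contributes nothing, and \(M\geq1\) is not needed for this estimate.
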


\begin{proof}
For a word \(w=y_1\cdots y_s\), insert and subtract one factor at a
time.  The resulting telescoping sum has \(s\) terms, each bounded by
\(M^{s-1}\varepsilon\).  Summing over the words proves the norm estimate;
the trace estimate follows from \(\abs{\tr(H)}\leq\norm H\).
\end{proof}

\subsection{Centered radial truncation}

For a type pair \((a,b)\) and \(K\in\N\), define
\begin{align}
 V^{(K),ab}
 &=V^{ab}\one_{\{\norm{V^{ab}}_2\leq K\}}
   -\E\!\left[V^{ab}\one_{\{\norm{V^{ab}}_2\leq K\}}\right],
 \label{eq:radial-truncation}\\
 R^{(K),ab}&=V^{ab}-V^{(K),ab}.
 \label{eq:radial-residual}
\end{align}
Both vectors are centered.  Since the Euclidean ball is invariant under the
endpoint swap \(S\), the identities

\[
 \mu^{ba}=S_{\#}\mu^{ab},\qquad \mu^{aa}=S_{\#}\mu^{aa}
\]
remain valid after truncation and for the residuals.  Set
\[
 \delta_K=\max_{a,b}
 \norm{R^{(K),ab}}_{L^2(\C^{2d})}.
\]
Centering is an orthogonal projection in \(L^2\), and the number of type
pairs is finite, so
\begin{equation}
 \delta_K^2
 \leq\max_{a,b}\E\!\left[
   \norm{V^{ab}}_2^2\one_{\{\norm{V^{ab}}_2>K\}}
 \right]\longrightarrow0.
 \label{eq:delta-K}
\end{equation}
Let \(X_{\kappa,n}^{(K)}\) be the matrix formed from the truncated atoms,
with zero diagonal.  The original diagonal will be restored below.

\subsection{Cross-type residual blocks}

We first record the rectangular input.  If \(z\) is a centered complex
variable with finite fourth moment and \(Z_n=(z_{ij})_{i,j\leq n}\) is the
\(n\)-th corner of one infinite iid array, the \(k=1\) case of
Bai--Yin's iid matrix norm estimate \cite[Theorem~2.1]{BaiYin1986},
applied separately to real and imaginary parts, gives
\begin{equation}
 \limsup_{n\to\infty}\norm{n^{-1/2}Z_n}
 \leq2\bigl(\norm{\Rea z}_2+\norm{\Ima z}_2\bigr)
 \leq2\sqrt2\norm z_2
 \quad\text{almost surely}.
 \label{eq:complex-bai-yin}
\end{equation}
No independence between the real and imaginary parts is used.

Fix \(a\ne b\), \(\kappa\leq d\), and \(K\).  Complete the observed
\(a\)-to-\(b\) residual entries on an auxiliary probability space to one
full infinite iid array \(Y^{ab,+,\kappa,K}\) with the appropriate scalar
marginal.  Then the normalized directed block is exactly
\[
 D_{\kappa,n}^{ab,K}
 =P_{a,n}\frac{Y_n^{ab,+,\kappa,K}}{\sqrt n}P_{b,n}.
\]
The reversal relation ensures that all entries in this ordered block have
the same law, irrespective of which endpoint has the smaller numerical
index.  Compression contractivity and \eqref{eq:complex-bai-yin} yield
\begin{equation}
 \limsup_n\norm{D_{\kappa,n}^{ab,K}}
 \leq2\sqrt2\,\delta_K
 \quad\text{almost surely}.
 \label{eq:cross-tail-one}
\end{equation}
The reverse direction is completed in a separate iid array.  The two
observed directions may be correlated, as prescribed by their common pair
atom; separate completions do not modify this correlation and avoid imposing
a false independence.  Taking a finite intersection and summing the directed
blocks gives
\begin{equation}
 \limsup_n
 \norm{\sum_{a\ne b}D_{\kappa,n}^{ab,K}}
 \leq2\sqrt2\,r(r-1)\delta_K
 \quad\text{almost surely}.
 \label{eq:cross-tail-all}
\end{equation}
The compressed quantities depend only on the original atoms.  Fubini's
theorem therefore removes the auxiliary fillers from
\eqref{eq:cross-tail-one}--\eqref{eq:cross-tail-all}.

\subsection{Same-type residual blocks}

The two orientations of a same-type atom require a different reduction.
Fix \(a,\kappa,K\), write

\[
 u=R^{(K),aa}_{+,\kappa},\qquad
 v=R^{(K),aa}_{-,\kappa},\qquad
 p=\frac{u+v}{\sqrt2},\quad q=\frac{u-v}{\sqrt2},
\]
and let \(M_{\kappa,n}^{aa,K}\) be the corresponding unnormalized
zero-diagonal residual matrix, extended by zero away from the type-\(a\)
block.  If \(S_n\) is complex symmetric with off-diagonal atom \(p\), and
\(A_n\) is complex skew-symmetric with upper-triangular atom \(q\), then
\begin{equation}
 M_{\kappa,n}^{aa,K}=\frac{S_n+A_n}{\sqrt2}.
 \label{eq:same-type-decomposition}
\end{equation}
Split \(S_n\) and \(A_n\) into real and imaginary parts.  This produces two
real symmetric and two real skew-symmetric arrays.  Complete each same-type
block once to an infinite upper-triangular iid array.  Anderson's
restatement of the Bai--Yin theorem \cite[Theorem~3]{Anderson2013}, applied
to both signs, controls the two symmetric arrays.  The explicitly twisted
version in \cite[Remark~4]{Anderson2013}, again applied to both signs,
controls \(iA_n\) and \(-iA_n\).  Remark~3 of that source includes the
variance-zero case.  The completed arrays have centered off-diagonal laws
with finite fourth moments and zero diagonal, so all the hypotheses are
satisfied even when some residual coordinate is degenerate.

Put
\[
 \rho_{a\kappa K}^2=\E|u|^2+\E|v|^2.
\]
The transform from \((u,v)\) to \((p,q)\) is orthogonal, and hence the sum
of the four real-coordinate standard deviations is at most
\(2\rho_{a\kappa K}\).  Compression, the triangle inequality, and
\eqref{eq:same-type-decomposition} therefore give the deliberately
nonoptimal estimate
\begin{equation}
 \limsup_n\norm{n^{-1/2}M_{\kappa,n}^{aa,K}}
 \leq2\sqrt2\,\rho_{a\kappa K}
 \leq2\sqrt2\,\delta_K
 \quad\text{almost surely}.
 \label{eq:same-tail-one}
\end{equation}
Reindexing the type-\(a\) subsequence would reveal a factor
\(\sqrt{\pi_a}\), but it is unnecessary here.  Distinct same-type blocks
are block diagonal, so their sum is controlled by the maximum of their
norms.  Again Fubini removes the auxiliary completion.

Combining \eqref{eq:cross-tail-all}, \eqref{eq:same-tail-one}, and
\eqref{eq:delta-K}, and intersecting over finitely many colors and types and
countably many integer \(K\), proves the following estimate.

\begin{proposition}[Nested residual estimate]
\label{prop:nested-residual}
For the nested infinite-array model,
\begin{equation}
 \lim_{K\to\infty}\limsup_{n\to\infty}
 \max_{\kappa\leq d}
 \norm{X_{\kappa,n}^{\circ}-X_{\kappa,n}^{(K)}}=0
 \quad\text{almost surely},
 \label{eq:nested-residual-limit}
\end{equation}
where \(X_{\kappa,n}^{\circ}\) denotes the original matrix with its
diagonal deleted.
\end{proposition}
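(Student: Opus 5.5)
The plan is to assemble the residual estimates already proved for the individual blocks.

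First, for each fixed \(K\) and \(\kappa\), write the zero-diagonal residual \(X_{\kappa,n}^{\circ}-X_{\kappa,n}^{(K)}\) as \(n^{-1/2}\) times the matrix formed from the residual atoms \(R^{(K),\theta(i)\theta(j)}\). This matrix splits as
\[
 X_{\kappa,n}^{\circ}-X_{\kappa,n}^{(K)}
 =\sum_{a\ne b}D_{\kappa,n}^{ab,K}
 +\sum_{a}n^{-1/2}M_{\kappa,n}^{aa,K}.
\]
The splitting comes from inserting \(I=\sum_aP_{a,n}\) on both sides. Here the directed block from type \(a\) to type \(b\) uses the coordinate of the residual atom oriented from \(a\) to \(b\). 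By the reversal relation \(\mu^{ba}=S_\#\mu^{ab}\), which is preserved by radial truncation, these entries are identically distributed irrespective of numerical order. This is the point to check carefully, so that the blocks really coincide with the compressed completed arrays used in \eqref{eq:cross-tail-one} and \eqref{eq:same-tail-one}.

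Next, apply the triangle inequality. Use \eqref{eq:cross-tail-all} for the cross-type part. For the same-type part, use that the blocks \(P_{a,n}(\cdot)P_{a,n}\) are mutually orthogonal, so their sum has norm equal to the maximum of the block norms, and each is bounded via \eqref{eq:same-tail-one}. This gives, for each fixed \(K\) and \(\kappa\), an almost-sure event on which
\[
 \limsup_{n\to\infty}\norm{X_{\kappa,n}^{\circ}-X_{\kappa,n}^{(K)}}
 \le \bigl(2\sqrt2\,r(r-1)+2\sqrt2\bigr)\delta_K.
\]
The event is defined in terms of the original atoms alone. By Fubini over the auxiliary completions, it has probability one on the original space.

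Finally, intersect these events over the finitely many \(\kappa\le d\) and the countably many \(K\in\N\). On the resulting single probability-one event the bound holds simultaneously for all \(K\), so taking the maximum over \(\kappa\) and letting \(K\to\infty\) via \eqref{eq:delta-K} gives \eqref{eq:nested-residual-limit}.

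The main obstacle is not the arithmetic but the legitimacy of the completions. Each directed cross-type block and each symmetric or skew sector of a same-type block must be embedded, within the one nested array, as a compression of a single infinite i.i.d. (respectively upper-triangular i.i.d.) array. This lets the almost-sure Bai--Yin and Anderson bounds apply along the full sequence \(n\), and not merely along subsequences or for fresh rows.

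I would handle this as follows. Enumerate the infinitely many type-\(a\) vertices, or type-\(a\)/type-\(b\) vertex pairs, in their natural order, and fill the unobserved positions of an infinite array with independent copies of the relevant scalar marginal. The \(n\)-th corner of the observed model is then a principal compression of a corner of size at most \(n\) of that array. When the type-\(a\) vertices are sparse, the array corner has size \(m\le n\) while the normalization is \(n^{-1/2}\). Since \(\sqrt{m/n}\le1\), this only improves the bound, which is why the constants above ignore \(\pi_a\).

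Because the two orientations of a same-type atom enter only through \(p=(u+v)/\sqrt2\) and \(q=(u-v)/\sqrt2\), each completed array has independent entries above the diagonal. So no independence inside an atom is ever required.
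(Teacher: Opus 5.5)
Your proposal is correct and follows the paper's argument: the block decomposition via $\sum_a P_{a,n}$, Bai--Yin on each directed cross-type block, and Anderson on the symmetric and skew sectors of each same-type block. The remaining steps also match: the triangle inequality with the block-diagonal maximum, Fubini, a countable intersection over $K$, and finally $\delta_K\to0$. The one variation is that you reindex along the type-$a$ vertices and absorb $\sqrt{m/n}\le1$, whereas the paper completes each block on the original index set so that it is literally $P_{a,n}Y_nP_{b,n}/\sqrt n$; the paper itself mentions reindexing as an alternative that would reveal a factor $\sqrt{\pi_a}$. Note that after your reindexing every array position is already observed, so no filling is actually needed.
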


For nonnested rows with the same finite family of laws, let
\(R_{n,K}\) denote the maximum residual norm on the left below, and construct
\(R^{\mathrm{ref}}_{n,K}\) from fixed nested infinite iid completions using
the same deterministic type sets.  For every \(n,K\) and \(\varepsilon>0\),
\[
 \mathbb P\{R_{n,K}>\varepsilon\}
 =\mathbb P\{R^{\mathrm{ref}}_{n,K}>\varepsilon\}.
\]
The almost-sure Bai--Yin--Anderson bounds for the reference arrays therefore
imply, after a finite union over types and colors,
\begin{equation}
 \lim_{K\to\infty}\limsup_{n\to\infty}
 \mathbb P\!\left\{
 \max_{\kappa\leq d}
 \norm{X_{\kappa,n}^{\circ}-X_{\kappa,n}^{(K)}}>\varepsilon
 \right\}=0.
 \label{eq:triangular-residual}
\end{equation}

\subsection{Diagonal entries}

Only a second moment is needed on the diagonal.  For a nested sequence whose
laws belong to a fixed finite type family, the tail-sum criterion gives
\[
 \sum_{i:\theta(i)=a}
 \mathbb P\{ |d_i|>\varepsilon\sqrt i\}<\infty.
\]
Borel--Cantelli, followed by a split into finitely many early indices and
the remaining indices, yields
\begin{equation}
 \frac1{\sqrt n}\max_{i\leq n}|d_i|\longrightarrow0
 \quad\text{almost surely}.
 \label{eq:nested-diagonal}
\end{equation}
For nonnested rows the union bound and
\(t^2\mathbb P\{|d|>t\}\to0\) under \(L^2\) give the same conclusion in
probability.  Thus deletion or restoration of the diagonal does not affect
any fixed polynomial test.

\subsection{Passage to the full covariance and proof of the main theorems}

Let \(x^{(K)}\) be the free Gaussian family obtained from the covariance of
the truncated laws.  Realify all type-pair atoms and realize their Gaussian
factors by applying the positive-semidefinite covariance square roots to
one fixed finite collection of standard free circular and semicircular
sources.  The \(L^2\)-convergence in \eqref{eq:delta-K} implies convergence
of these finite real covariance matrices.  Continuity of the
positive-semidefinite square root, including at a singular covariance,
then gives
\begin{equation}
 \max_{\kappa\leq d}\norm{x_\kappa^{(K)}-x_\kappa}
 \longrightarrow0.
 \label{eq:free-tail-continuity}
\end{equation}

For fixed \(K\), the bounded theorem of \cref{sec:bounded-transfer} applies
to \(X_n^{(K)}\).  In the nested model take a countable intersection over
\(K\), the rational matrix-valued polynomials, and the events in
\cref{prop:nested-residual}.  One fixed truncation level first gives an
eventual uniform norm bound on the original and truncated tuples.  For a
fixed polynomial \(Q\), the three differences
\[
 Q(X_n)-Q(X_n^{(K)}),\qquad
 Q(X_n^{(K)})-Q(x^{(K)}),\qquad
 Q(x^{(K)})-Q(x)
\]
are controlled, respectively, by \cref{lem:polynomial-continuity}, the
bounded theorem, and \eqref{eq:free-tail-continuity}.  First let
\(n\to\infty\) and then \(K\to\infty\).  This proves norm and normalized
trace convergence on one probability-one event.  Approximation of arbitrary
fixed coefficients by rational ones completes the countable-event argument.

For nonnested rows the same three-term argument uses
\eqref{eq:triangular-residual} and convergence in probability.  It is made
separately for each fixed coefficient size and each fixed polynomial; no
common almost-sure event and no uniformity in the coefficient size result.
This proves the two main convergence assertions stated in
\cref{sec:model-results}.

\subsection{Why the coupling distinction is sharp}

The fourth moment is necessary already on the real Wigner subfamily by the
necessary direction of Bai--Yin \cite{BaiYin1988}.  There is also no
coupling-invariant almost-sure theorem for fresh triangular rows at this
threshold.  Indeed, choose a centered symmetric law satisfying, for large
\(t\),
\[
 \mathbb P\{|\xi|>t\}\asymp
 \frac1{t^4(\log t)^2}.
\]
It has finite fourth moment.  Among \(n^2\) independent entries in a fresh
row, the probability of an entry of order \(\sqrt n\) is comparable to
\((\log n)^{-2}\).  These probabilities tend to zero but are not summable;
if the rows are independent, the second Borel--Cantelli lemma produces such
spikes infinitely often.  Since the operator norm dominates the magnitude
of every normalized entry, almost-sure edge convergence can fail while the
fixed-test in-probability conclusion remains valid.  This is exactly why the
nested theorem and the nonnested theorem have different modes of
convergence.

\section{Finite-separable continuous profiles}
\label{sec:profiles}

We apply the finite-type theorem to a class of continuously weighted
non-Hermitian matrices.  The point of the application is not to approximate
an arbitrary covariance kernel.  Rather, it shows that deterministic
left--right weights and literal transpose weights may be incorporated at the
exact fourth-moment threshold without losing the strong limit.

Let
\[
 T_n=\operatorname{diag}(1/n,2/n,\ldots,n/n)
\]
and let \(t\in C([0,1])\) be the coordinate function.  For every scalar
polynomial \(p\),
\[
 \norm{p(T_n)}\longrightarrow\norm{p(t)}_\infty,
 \qquad
 \tr_n p(T_n)\longrightarrow\int_0^1p(s)\,ds.
\]
The same maximum-on-a-dense-grid argument works at every fixed matrix level.
Thus \(T_n\) converges strongly to \(t\) in the faithful tracial
\(C^*\)-probability space
\begin{equation}
 \left(C([0,1]),\ f\mapsto\int_0^1f(s)\,ds\right).
 \label{eq:coordinate-base-strong}
\end{equation}

We use the homogeneous case \(r=1\) of the model.  For a pair atom write
\[
 V=(z_+,z_-)\in\C^d\oplus\C^d,
 \qquad
 z_+=(\xi_{ij}^{(\kappa)})_{\kappa\leq d},\quad
 z_-=(\xi_{ji}^{(\kappa)})_{\kappa\leq d}.
\]
Let \(e=(e_\kappa)_{\kappa\leq d}\) be its covariance-matched free
Gaussian limit from \cref{sec:model-results}.  If
\[
 u_\kappa=\sum_\ell\alpha_{\kappa\ell}g_{+,\ell},
 \qquad
 v_\kappa=\sum_\ell\beta_{\kappa\ell}g_{-,\ell}
\]
are the symmetric and antisymmetric real Gaussian factorizations, then
\begin{align}
 e_\kappa
 &=\frac1{\sqrt2}\left(
   \sum_\ell\alpha_{\kappa\ell}s_{+,\ell}
   -i\sum_\ell\beta_{\kappa\ell}s_{-,\ell}\right),
 \label{eq:profile-direct-limit}\\
 e_\kappa^{\leftarrow}
 &=\frac1{\sqrt2}\left(
   \sum_\ell\alpha_{\kappa\ell}s_{+,\ell}
   +i\sum_\ell\beta_{\kappa\ell}s_{-,\ell}\right).
 \label{eq:profile-reverse-limit}
\end{align}
The arrow is notation for the endpoint-reversed coordinate.  It is not an
intrinsic transpose operation in the limiting algebra, and it is not an
independent copy of \(e_\kappa\).

Fix an output dimension \(s\).  For finitely many indices
\((\rho,\kappa,\ell)\), let
\[
 a_{\rho\kappa\ell},b_{\rho\kappa\ell},
 \widetilde a_{\rho\kappa\ell},
 \widetilde b_{\rho\kappa\ell}\in C([0,1]),
\]
where every number of summands is fixed independently of \(n\).  Define
\begin{align}
 Z_{\rho,n}
 &=\sum_{\kappa,\ell}
   a_{\rho\kappa\ell}(T_n)X_{\kappa,n}
   b_{\rho\kappa\ell}(T_n) \notag\\
 &\quad+\sum_{\kappa,\ell}
   \widetilde a_{\rho\kappa\ell}(T_n)X_{\kappa,n}^{\transpose}
   \widetilde b_{\rho\kappa\ell}(T_n),
 \label{eq:profile-matrix}\\
 z_\rho
 &=\sum_{\kappa,\ell}
   a_{\rho\kappa\ell}(t)e_\kappa
   b_{\rho\kappa\ell}(t) \notag\\
 &\quad+\sum_{\kappa,\ell}
   \widetilde a_{\rho\kappa\ell}(t)e_\kappa^{\leftarrow}
   \widetilde b_{\rho\kappa\ell}(t).
 \label{eq:profile-limit}
\end{align}

\begin{corollary}[Finite-separable profile convergence]
\label{cor:finite-separable-profile}
Assume that the homogeneous pair atoms satisfy the hypotheses of
\cref{thm:nested-main}.  Then, on one probability-one event, for every fixed
\(q\) and every fixed \(M_q(\C)\)-valued \(*\)-polynomial \(Q\),
\begin{equation}
 \norm{Q(T_n,Z_{1,n},\ldots,Z_{s,n})}
 \longrightarrow
 \norm{Q(t,z_1,\ldots,z_s)},
 \label{eq:profile-strong}
\end{equation}
and the corresponding normalized traces converge.  In the fixed-law
nonnested model of \cref{thm:triangular-main}, the same assertions hold in
probability, separately for every fixed \((q,Q)\).

If another deterministic tuple \(A_n\) is retained, the same conclusion
holds provided \((T_n,A_n)\) converges jointly strongly; separate strong
convergence of \(T_n\) and \(A_n\) is not sufficient.
\end{corollary}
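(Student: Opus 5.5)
The plan is to apply \cref{thm:nested-main} (resp.\ \cref{thm:triangular-main}) with \(r=1\), with the deterministic companion tuple consisting of \(T_n\) (and \(A_n\) if retained), and with the enlarged color tuple
\[
(X_{1,n},\ldots,X_{d,n},X_{1,n}^{\transpose},\ldots,X_{d,n}^{\transpose}).
\]

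\textbf{Step 1: the enlarged atom satisfies the hypotheses.} The \((i,j)\) entry of \(X^{\transpose}_{\kappa,n}\) is \(\xi^{(\kappa)}_{ji}\). The enlarged atom is therefore \(\widehat V_{ij}=(z_+,z_-,z_-,z_+)\), a fixed real-linear image \(J\) of \(V_{ij}\). Centering, the fourth moment, and the diagonal \(L^2\) bound are inherited. Reversal consistency is also inherited: swapping the two endpoint halves of \(\widehat V\) corresponds to \(S\) applied to \(V\), and \(\mu^{11}\) is \(S\)-invariant.

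Strong convergence of \(T_n\) to \(t\) in \eqref{eq:coordinate-base-strong} is recorded above. In the companion case, joint strong convergence of \((T_n,A_n)\) is exactly the hypothesis. Since \(r=1\), the single type projection is \(I\) and is trivially jointly convergent.

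\textbf{Step 2: identify the limit of the transpose colors.} Factor the covariance of \(\widehat V\) through \(J\). The symmetric and antisymmetric combinations are \(\widehat U=(U,U)\) and \(\widehat V^{-}=(V,-V)\). By \eqref{eq:same-free-limit}, the limit of the transpose color \(\kappa\) uses the same sources \(s_{\pm,\ell}\) as color \(\kappa\), with coefficients \(\alpha_{\kappa\ell}\) and \(-\beta_{\kappa\ell}\). That limit is exactly \(e_\kappa^{\leftarrow}\) in \eqref{eq:profile-reverse-limit}.

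\Cref{lem:gaussian-factorization-invariance} makes this choice of factorization legitimate. The conclusion is that \((T_n,X_n,X_n^{\transpose})\) converges strongly to \((t,e,e^{\leftarrow})\), almost surely on one event (resp.\ in probability), at every fixed level \(q\).

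\textbf{Step 3: pass from polynomial to continuous masks.} If every mask is a polynomial, then \(Q(T_n,Z_n)\) is a fixed \(*\)-polynomial in the enlarged tuple, and the result follows directly.

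For general continuous masks, approximate each one by a polynomial \(p_\varepsilon\) uniformly on \([0,1]\) by Weierstrass, so that \(\norm{f(T_n)-p_\varepsilon(T_n)}\le\norm{f-p_\varepsilon}_\infty\). Three ingredients then control the error:
\begin{itemize}
\item the eventual uniform bound on \(\norm{X_{\kappa,n}}=\norm{X_{\kappa,n}^{\transpose}}\), obtained from the norm convergence of the coordinate itself;
\item \cref{lem:polynomial-continuity}, which bounds \(\norm{Z_{\rho,n}-Z^{\varepsilon}_{\rho,n}}\) by \(C\varepsilon\) uniformly in large \(n\), and likewise bounds the corresponding limit difference;
\item a second application of \cref{lem:polynomial-continuity} to transfer these bounds to \(Q\).
\end{itemize}
A three-term argument (let \(n\to\infty\), then \(\varepsilon\to0\)) gives norm and trace convergence.

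In the triangular case, the uniform norm bound holds with probability tending to one, which suffices for convergence in probability. Joint strong convergence of \((T_n,A_n)\) is what makes mixed words in \(T_n\), \(A_n\), and the random colors covered by the main theorem.

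\textbf{Main obstacle.} I expect Step 2 to be the main obstacle: checking that the sign of the antisymmetric sector for the transpose color comes out as \(+i\) under the sign convention of \cref{sec:gaussian-comparator}, and that the degenerate (singular) \(8d\)-dimensional covariance introduces no independent sources. I would verify both directly from the entry identities \eqref{eq:same-entry-check} applied to \(J\).
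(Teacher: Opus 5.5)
Your proposal is correct and follows essentially the same route as the paper's proof. You install the literal transposes as $d$ additional colors via the real-linear duplication $(z_+,z_-)\mapsto(z_+,z_-,z_-,z_+)$ of the pair atom, read off $e_\kappa^{\leftarrow}$ from the sign flip of the antisymmetric sector using the same free sources, apply the main theorem with polynomial masks, and pass to continuous masks by uniform polynomial approximation together with the eventual (respectively, tight) coordinate norm bounds and \cref{lem:polynomial-continuity}.
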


\begin{proof}
Install the literal transposes before applying the main theorem.  Namely,
consider the \(2d\)-color tuple
\[
 (X_{1,n},\ldots,X_{d,n},
   X_{1,n}^{\transpose},\ldots,X_{d,n}^{\transpose}).
\]
The forward and reverse halves of its unordered-pair atom are
\[
 W_+=(z_+,z_-),\qquad W_-=(z_-,z_+).
\]
Equivalently, \((W_+,W_-)=L(z_+,z_-)\), where
\(L:\R^{4d}\to\R^{8d}\) is a fixed real-linear duplication map.  Thus
independence across unordered pairs and the fourth-moment hypothesis are
unchanged, while the enlarged covariance is the possibly singular matrix
\(L\Sigma_{\R}L^{\transpose}\).  Its Gaussian comparator is \(LG\), not an
independent enlargement.  Endpoint reversal fixes the symmetric sector and
changes the sign of the antisymmetric sector, which gives precisely
\eqref{eq:profile-direct-limit}--\eqref{eq:profile-reverse-limit} with the
same free sources.

The main theorem now gives joint strong convergence of
\((T_n,X_n,X_n^{\transpose})\) for polynomial functions of \(T_n\).  To
pass to continuous masks, choose scalar polynomials \(p,q\) with
\(\norm{a-p}_\infty,\norm{b-q}_\infty\leq\varepsilon\).  Functional
calculus gives
\begin{align*}
 &\norm{a(T_n)X_{\kappa,n}b(T_n)
       -p(T_n)X_{\kappa,n}q(T_n)}\\
 &\qquad\leq
 \varepsilon\norm{X_{\kappa,n}}\norm b_\infty
 +\norm p_\infty\norm{X_{\kappa,n}}\varepsilon.
\end{align*}
The identical estimate holds with the transpose because
\(\norm{X_{\kappa,n}^{\transpose}}=\norm{X_{\kappa,n}}\).  There are only
finitely many masks.  Coordinate norms are almost surely bounded in the
nested model and tight in the nonnested model, and the same estimates hold
in the limit algebra.  Approximate every mask, apply the main theorem to the
resulting polynomial, and then let \(\varepsilon\downarrow0\).  Finally use
\cref{lem:polynomial-continuity} for \(Q\); the trace assertion follows from
\(\abs{\tr(H)}\leq\norm H\).
\end{proof}

\begin{example}[A two-orientation profiled ensemble]
\label{ex:two-orientation-profile}
Take one color and let the homogeneous pair atom
\(V=(\xi_+,\xi_-)\) satisfy the hypotheses above.  The concrete matrix
\[
  Z_n=(I+T_n)X_nT_n+T_nX_n^{\transpose}(I-T_n)
\]
has entries
\[
 (Z_n)_{ij}=\frac1{\sqrt n}\left[
   (1+i/n)(j/n)\xi_{ij}+(i/n)(1-j/n)\xi_{ji}
 \right].
\]
Thus the direct and endpoint-reversed coordinates carry genuinely different
position-dependent weights.  The corollary identifies the strong limit as
\[
 z=(1+t)e\,t+t e^{\leftarrow}(1-t).
\]
In particular, every fixed matrix-valued polynomial in \((T_n,Z_n)\) has the
corresponding limiting norm.  Moreover, for each fixed \(\lambda\in\C\), the
singular-value set of \(Z_n-\lambda I\) converges in Hausdorff distance to the
spectrum of \(\abs{z-\lambda}\); equivalently, the self-adjoint hermitizations
of \(Z_n-\lambda I\) converge spectrally.  These convergences are almost sure
in the nested model and in probability for fixed-law fresh rows.  This last
statement follows from \cref{cor:spectral-convergence} and is not an assertion
about the eigenvalue support of the nonnormal matrix \(Z_n\).
\end{example}

Entrywise, \eqref{eq:profile-matrix} is exactly
\[
 (Z_{\rho,n})_{ij}=\frac1{\sqrt n}\sum_\kappa
 \left[
  k_{\rho\kappa}^{\mathrm{dir}}(i/n,j/n)\xi_{ij}^{(\kappa)}
 +k_{\rho\kappa}^{\mathrm{rev}}(i/n,j/n)\xi_{ji}^{(\kappa)}
 \right],
\]
where
\[
 k_{\rho\kappa}^{\mathrm{dir}}(x,y)
 =\sum_\ell a_{\rho\kappa\ell}(x)b_{\rho\kappa\ell}(y),
 \quad
 k_{\rho\kappa}^{\mathrm{rev}}(x,y)
 =\sum_\ell\widetilde a_{\rho\kappa\ell}(x)
                    \widetilde b_{\rho\kappa\ell}(y).
\]
This form also identifies the exact boundary of the corollary.  It permits
fixed finite sums of separable continuous masks generated from finitely many
homogeneous latent colors.  It does not assert a theorem for arbitrary
continuous kernels, separable rank growing with \(n\), or a continuum of
unrelated local pair laws.  Although finite sums of \(a(x)b(y)\) are dense in
\(C([0,1]^2)\), uniform kernel approximation alone does not give the
dimension-free Schur-multiplier bound needed to interchange that
approximation with \(n\to\infty\).  An extension beyond finite separable
rank therefore requires an additional stability estimate.

Finally, the enlarged unweighted Gaussian family
\((e,e^{\leftarrow})\) is free from the base algebra \(C([0,1])\).  The
weighted variables \(z_\rho\), which contain coefficients from that algebra,
should not themselves be described as scalar-free from it.

\section{Consequences and scope}
\label{sec:consequences}

We collect several immediate consequences and make explicit what the theorem
does not address.

\subsection{Spectra and singular values of fixed polynomials}

\begin{corollary}[Self-adjoint spectral convergence]
\label{cor:spectral-convergence}
Let \(H\) be a fixed self-adjoint matrix-valued \(*\)-polynomial in the
random matrices, type projections, and deterministic companions, and let
\(H_n\) and \(H_\infty\) denote its finite-dimensional and limiting
evaluations.  In the nested model,
\begin{equation}
 d_{\mathrm H}\bigl(\Spec(H_n),\Spec(H_\infty)\bigr)
 \longrightarrow0
 \quad\text{almost surely}.
 \label{eq:spectral-hausdorff}
\end{equation}
For fixed-law nonnested rows the convergence holds in probability.
Consequently, the singular-value sets of every fixed (not necessarily
self-adjoint) polynomial converge in the same mode.
\end{corollary}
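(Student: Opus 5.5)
The plan is to deduce \cref{cor:spectral-convergence} from joint strong convergence alone, i.e.\ from \cref{thm:nested-main} (nested case) and \cref{thm:triangular-main} (nonnested case). By \cref{lem:fixed-amplification}, the matrix-valued polynomial \(H\) of size \(q\) only requires trace and norm convergence of \(M_q(\C)\)-valued polynomials in \((P_n,A_n,X_n)\). I would first prove a deterministic statement: if self-adjoint \(H_n\) satisfy \(\norm{R(H_n)}\to\norm{R(H_\infty)}\) and \((\tr_q\otimes\tr_n)R(H_n)\to(\tr_q\otimes\tau_B)R(H_\infty)\) for every scalar polynomial \(R\), with \(H_\infty\) in a faithful tracial algebra, then the spectra converge in Hausdorff distance. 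Here \(R(H)\) is again a fixed \(M_q\)-valued polynomial in the original variables, so the main theorems supply these hypotheses.

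\emph{Upper inclusion.} The norms \(\norm{H_n}\) are eventually bounded by \(M:=\norm{H_\infty}+1\). Fix \(\varepsilon>0\) and choose a continuous \(f\ge0\) on \([-M,M]\) that equals \(1\) on \(\{t:\dist(t,\Spec H_\infty)\ge\varepsilon\}\) and vanishes on \(\Spec H_\infty\). By Weierstrass, take a polynomial \(R\) with \(\norm{f-R}_\infty<1/4\) on \([-M,M]\). Then \(\norm{R(H_\infty)}<1/4\), so eventually \(\norm{R(H_n)}<1/4\). Any eigenvalue of \(H_n\) at distance at least \(\varepsilon\) from \(\Spec H_\infty\) would force \(\norm{R(H_n)}\ge3/4\). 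This gives eventual inclusion of \(\Spec H_n\) in the \(\varepsilon\)-neighbourhood of \(\Spec H_\infty\).

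\emph{Lower inclusion.} Fix \(\lambda\in\Spec H_\infty\) and a bump \(g\ge0\) supported in \((\lambda-\varepsilon,\lambda+\varepsilon)\) with \(g(\lambda)=1\). Faithfulness of \((\tr_q\otimes\tau_B)\) gives \((\tr_q\otimes\tau_B)g(H_\infty)>0\), because \(g(H_\infty)\) is a nonzero positive element. Polynomial approximation on \([-M,M]\) transfers trace convergence to continuous functions, so \((\tr_q\otimes\tr_n)g(H_n)>0\) eventually, and hence \(\Spec H_n\) meets \((\lambda-\varepsilon,\lambda+\varepsilon)\). Covering the compact set \(\Spec H_\infty\) by finitely many such intervals makes this uniform, which completes Hausdorff convergence.

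\emph{Modes of convergence.} In the nested model, everything holds on the single event \(\Omega_0\), using the countably many polynomials \(R\) with rational coefficients; this gives the almost-sure statement. In the nonnested model I would use the subsequence principle, as in the proof of \cref{thm:bounded-transfer}. Only finitely many polynomial tests are needed for each \(\varepsilon\): the norm bound, one \(R\), and finitely many bumps. Convergence in probability of each therefore yields \(\mathbb P\{d_{\mathrm H}>\varepsilon\}\to0\) directly.

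\emph{Singular values.} For a general polynomial \(Q\), apply the result to the self-adjoint hermitization \(\mathfrak h(Q)\) from \eqref{eq:hermitization-polynomial}. Its spectrum is the symmetrized singular-value set \(\pm\sigma(Q_n)\), together with \(\pm\Spec\abs{Q_\infty}\) in the limit, so Hausdorff convergence passes to the singular-value sets.

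The main obstacle is the lower inclusion, which is where faithfulness of the limiting trace is essential. It is supplied by the reduced free-product construction in \cref{subsec:explicit-limit} and by the faithfulness assumed for \(\tau_B\).
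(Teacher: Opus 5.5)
Your proposal is correct and follows essentially the same route as the paper: norm convergence plus polynomial approximation of a continuous cutoff gives the upper inclusion, and a bump function, faithfulness of the limiting trace and trace convergence give the lower inclusion, with only finitely many in-probability tests needed in the nonnested case. The one small difference is that you obtain the singular-value statement from the hermitization \(\mathfrak h(Q)\), whose spectrum is the symmetrized singular-value set, whereas the paper applies the self-adjoint result to \(Q_n^*Q_n\) and uses continuity of the square root; both work equally well.
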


\begin{proof}
Strong convergence gives norm convergence for every scalar polynomial in
\(H_n\).  Polynomial approximation of continuous functions then excludes
spectrum from compact sets disjoint from \(\Spec(H_\infty)\), proving the
upper inclusion.  For the reverse inclusion, take a nonnegative continuous
function supported in a small interval meeting \(\Spec(H_\infty)\) and
nonzero at a spectral point.  Faithfulness of the limiting trace implies
that its trace at \(H_\infty\) is positive.  Trace convergence, again after
polynomial approximation, forces \(H_n\) to have spectrum in that interval.
The same argument in probability proves the triangular version.  Apply the
self-adjoint assertion to \(Q_n^*Q_n\) and use continuity of the square root
to obtain the singular-value statement.
\end{proof}

Thus the theorem rules out asymptotic operator-norm outliers for every fixed
self-adjoint polynomial covered by the model.  For a nonnormal polynomial,
however, norm and singular-value control alone do not imply convergence of
its eigenvalue support or Brown measure; such conclusions require
uniform quantitative control of very small singular values together with
logarithmic integrability, neither of which follows from the strong-convergence
statement proved here.

\subsection{A covariance universality principle}

The limiting family depends on a type-pair law only through its full real
second-order covariance.  In particular, two finite collections of pair
laws satisfying the hypotheses and having identical real covariance matrices
for every type pair have the same joint strong limit with every admissible
deterministic companion tuple.  The statement includes correlations among
colors, complex pseudocovariances, and correlations between the two endpoint
orientations.  It is therefore stronger than entrywise variance matching,
but it still relies on independence across distinct unordered pairs.

The fourth-moment assumption is a threshold for this operator-norm statement,
not a source of quantitative concentration.  After truncation the bounded
comparison estimates are quantitative, but removal of the truncation under
only \(L^4\) yields qualitative convergence.  We make no exponential-tail or
finite-sample concentration claim for the unbounded model.

\subsection{Directions beyond the finite-type theorem}

The proof isolates three extensions that would require genuinely new input.
First, arbitrary continuous covariance kernels would require a
dimension-free stability estimate for approximating Schur multipliers; the
density of finite-separable functions is not by itself sufficient.  Second,
growing numbers of types, colors, separable terms, or matrix amplification
levels would require estimates uniform in those parameters.  Third, local
laws varying over a continuum of endpoint locations introduce both a
distributional-uniformity problem at the fourth-moment threshold and, for
nonexchangeable orientations, the discontinuous mask
\(\one_{\{x<y\}}\).  None of these uniform regimes is claimed here.

Within the finite regime, the reusable conclusion is instead exact: a
covariance-matched Gaussian endpoint, once identified jointly with the
deterministic base, transfers to all fixed amplified polynomial tests under a
finite-fourth-moment hypothesis, with the mode of convergence determined by
the across-size coupling.  That moment threshold is sharp for the nested
almost-sure statement already on the real Wigner subfamily.  At the same
threshold, arbitrary fresh triangular rows do not admit a coupling-invariant
almost-sure upgrade; our fixed-law nonnested conclusion remains convergence in
probability.

\appendix
\section{Covariance and normalization calculations}
\label{app:covariance-bookkeeping}

We collect here the finite-dimensional calculations behind
Section~\ref{sec:gaussian-comparator}.  Besides verifying constants, they
show explicitly that no complex covariance information and no type-proportion
factor is lost in the corner realization.

\subsection{Real covariance and directed cross-type blocks}

Let \(Z=(Z_+,Z_-)\in\C^{2d}\) be centered and let
\(\Sigma_{\mathbb R}=\E[\mathfrak r(Z)\mathfrak r(Z)^{\mathsf T}]\).
Choose \(C\in\R^{4d\times L}\), \(L\le4d\), with
\(CC^{\mathsf T}=\Sigma_{\mathbb R}\).  Combining each pair of real rows
of \(C\) into a complex row gives coefficients
\((\alpha_{\kappa\ell},\beta_{\kappa\ell})\) such that
\[
 (Z_{+,\kappa},Z_{-,\kappa})_{\kappa\le d}
 \stackrel{\mathrm{law}}=
 \sum_{\ell=1}^{L}
  (\alpha_{\kappa\ell},\beta_{\kappa\ell})_{\kappa\le d}g_\ell.
\]
This is a factorization on \(\R^{4d}\), not merely a factorization of the
Hermitian covariance \(\E[ZZ^*]\).  For example, it determines at once all
four families
\begin{align*}
 \E[Z_{+,\kappa}\overline{Z_{+,\lambda}}]
 &=\sum_\ell\alpha_{\kappa\ell}
                      \overline{\alpha_{\lambda\ell}},&
 \E[Z_{+,\kappa}Z_{+,\lambda}]
 &=\sum_\ell\alpha_{\kappa\ell}\alpha_{\lambda\ell},\\
 \E[Z_{+,\kappa}\overline{Z_{-,\lambda}}]
 &=\sum_\ell\alpha_{\kappa\ell}
                      \overline{\beta_{\lambda\ell}},&
 \E[Z_{+,\kappa}Z_{-,\lambda}]
 &=\sum_\ell\alpha_{\kappa\ell}\beta_{\lambda\ell},
\end{align*}
as well as the analogous formulas with the two endpoints interchanged.

Fix vertices \(i,j\) with \(\theta(i)=a\), \(\theta(j)=b\), and \(a<b\).
In \eqref{eq:cross-gaussian-matrix}, only the first summand contributes at
\((i,j)\) and only the second contributes at \((j,i)\).  Hence
\begin{equation}
 \left(\sqrt nY^{ab}_{\kappa,n}(i,j),
       \sqrt nY^{ab}_{\kappa,n}(j,i)\right)_{\kappa\le d}
 =\sum_{\ell=1}^{L_{ab}}
  (\alpha^{ab}_{\kappa\ell},\beta^{ab}_{\kappa\ell})_{\kappa\le d}
  \sqrt nG^{ab}_{\ell,n}(i,j).
 \label{eq:appendix-cross-entry-vector}
\end{equation}
The right-hand side has exactly the real covariance of
\((Z^{ab}_+,Z^{ab}_-)\).  If \(j<i\), reversal consistency of the laws swaps
the two displayed components.  Different unordered edges use distinct
Ginibre entries, which verifies independence as well as covariance.

If \(C_1C_1^{\mathsf T}=C_2C_2^{\mathsf T}=\Sigma_{\mathbb R}\), pad the
two matrices with zero columns to the same width.  There is an orthogonal
matrix \(O\) on the source space such that \(C_2=C_1O\) after restriction to
the support of \(\Sigma_{\mathbb R}\).  Both standard real Gaussian vectors
and free circular source families are invariant under this real orthogonal
rotation.  This gives a direct finite-dimensional proof of
\Cref{lem:gaussian-factorization-invariance}.

\subsection{Exchangeable endpoints and the factor
\texorpdfstring{\(2^{-1/2}\)}{2 to the power -1/2}}

Let \(J(z_+,z_-)=(z_-,z_+)\), and suppose the centered real Gaussian vector
\(Z=(Z_+,Z_-)\) satisfies \(JZ\stackrel{\mathrm{law}}=Z\).  Under the
orthogonal change of variables
\[
 U=\frac{Z_++Z_-}{\sqrt2},
 \qquad V=\frac{Z_+-Z_-}{\sqrt2},
\]
the action of \(J\) becomes \((U,V)\mapsto(U,-V)\).  Thus, for any real
linear functionals \(f,h\),
\[
 \E[f(U)h(V)]=\E[f(U)h(-V)]=-\E[f(U)h(V)]=0.
\]
The real covariance matrix of \((U,V)\) is block diagonal, and Gaussianity
therefore makes the two blocks independent.

For independent standard real Gaussians \(g_+,g_-\), the \(2\)-by-\(2\)
linear transformation back to the endpoints is
\begin{equation}
 \binom{Z_+}{Z_-}
 =\frac1{\sqrt2}
  \begin{pmatrix}1&1\\1&-1\end{pmatrix}
  \binom{U}{V}.
 \label{eq:appendix-hadamard-transform}
\end{equation}
At a same-type matrix edge, a symmetric source places the same copy of
\(U/\sqrt n\) in the two directions, whereas a skew source places opposite
copies of \(V/\sqrt n\).  Equation
\eqref{eq:appendix-hadamard-transform} consequently forces the outer factor
\(2^{-1/2}\) in \eqref{eq:same-gaussian-matrix}.  Removing it would double
every real covariance in that block.

On the diagonal, the symmetric matrix in
\eqref{eq:symmetric-skew-ginibre} has entries
\(S_{ii}=\sqrt2G_{ii}\), whereas \(K_{ii}=0\).  These entries need not have
the comparator's prescribed diagonal covariance.  Coupling the off-diagonal
parts and subtracting the two diagonal Gaussian vectors produces
\(D_{\kappa,n}\) in \eqref{eq:gaussian-diagonal-correction}.  If
\(\sigma^2\le C/n\) bounds every real-coordinate variance, then the scalar
Gaussian tail bound and a union bound give
\[
 \mathbb P\{\norm{D_{\kappa,n}}>t\}
 \le 4n\exp\!\left(-\frac{nt^2}{C'}\right),
\]
which is summable for each fixed \(t>0\).  This proves
\eqref{eq:diagonal-gaussian-negligible} without requiring the correction to
be independent of the polynomial model.

\subsection{The doubled corner and its trace}

Write a GOE matrix in \(n\)-by-\(n\) blocks as
\[
 W_{2n}=\begin{pmatrix}W_{11}&W_{12}\\W_{12}^{\mathsf T}&W_{22}\end{pmatrix}.
\]
Every entry of \(W_{12}\), including \((W_{12})_{ii}\), is an off-diagonal
entry of \(W_{2n}\).  The unordered global index pairs
\(\{i,n+j\}\), \(1\le i,j\le n\), are all distinct.  Thus
\(G_n=\sqrt2W_{12}\) has independent \(N(0,n^{-1})\) entries.  A direct
block multiplication gives
\[
 \sqrt2e^{(n)}_{11}W_{2n}e^{(n)}_{22}e^{(n)}_{21}
 =\begin{pmatrix}\sqrt2W_{12}&0\\0&0\end{pmatrix},
\]
which proves \eqref{eq:ginibre-goe-corner}.

Likewise, if \(Z\in\M_n\), then
\[
 \norm{\begin{pmatrix}Z&0\\0&0\end{pmatrix}}=\norm{Z},
 \qquad
 \frac1{2n}\Tr\begin{pmatrix}Z&0\\0&0\end{pmatrix}
 =\frac12\frac1n\Tr Z.
\]
This is \eqref{eq:corner-norm-trace}.  In the limit, the corner projection
\(e\) has trace \(1/2\), so the normalized corner trace is
\(\varphi_e=\varphi(e)^{-1}\varphi=2\varphi\).

To verify the normalization of \eqref{eq:circular-corner}, let
\(c=\sqrt2esfv\), where \(s\) is standard semicircular and free from
\(\M_2\otimes B\).  For \(b\in B\), identified with \(ebe\), the
semicircular covariance identity gives
\begin{align*}
 E_e(cbc^*)
 &=2eE\!\left(s(vbv^*)s\right)e
   =2e\,(\tr_2\otimes\tau_B)(fb)\,e
   =\tau_B(b)e,\\
 E_e(c^*bc)
 &=2v^*E\!\left(s(ebe)s\right)v
   =\tau_B(b)e,
\end{align*}
whereas \(E_e(cbc)=E_e(c^*bc^*)=0\) because the intervening matrix-unit
coefficient has zero trace.  Higher cumulants vanish.  Hence \(c\) is
standard circular in the normalized corner, not a circular element of
variance \(1/2\).

\subsection{Type proportions are already encoded by compression}

There is no additional factor \(\sqrt{\pi_a}\) or
\(\sqrt{\pi_b}\) in \eqref{eq:cross-free-limit}.  To see this directly,
let \(E_B\) be the trace-preserving conditional expectation onto \(B\) in
the free-product realization.  For the cross-type component and \(b\in B\),
the circular covariance relations give
\begin{align}
 E_B(x^{ab}_\kappa b(x^{ab}_\lambda)^*)
 &=\sum_\ell
   \alpha^{ab}_{\kappa\ell}
   \overline{\alpha^{ab}_{\lambda\ell}}
   p_a\tau_B(p_bbp_b)p_a
 \notag\\[-2mm]
 &\quad+\sum_\ell
   \beta^{ab}_{\kappa\ell}
   \overline{\beta^{ab}_{\lambda\ell}}
   p_b\tau_B(p_abp_a)p_b,
 \label{eq:cross-B-valued-star-covariance}\\
 E_B(x^{ab}_\kappa bx^{ab}_\lambda)
 &=\sum_\ell
   \alpha^{ab}_{\kappa\ell}\beta^{ab}_{\lambda\ell}
   p_a\tau_B(p_bbp_b)p_a
 \notag\\[-2mm]
 &\quad+\sum_\ell
   \beta^{ab}_{\kappa\ell}\alpha^{ab}_{\lambda\ell}
   p_b\tau_B(p_abp_a)p_b.
 \label{eq:cross-B-valued-pseudo-covariance}
\end{align}
At \(b=\one\), the factors
\(\tau_B(p_a)=\pi_a\) and \(\tau_B(p_b)=\pi_b\) appear automatically.
They are the limiting fractions of admissible column indices in the
corresponding finite blocks.

For a same-type component, the analogous formulas are
\begin{align}
 E_B(x^{aa}_\kappa b(x^{aa}_\lambda)^*)
 &=\frac12\left(
   \sum_\ell\alpha^a_{\kappa\ell}
                   \overline{\alpha^a_{\lambda\ell}}
  +\sum_\ell\beta^a_{\kappa\ell}
                   \overline{\beta^a_{\lambda\ell}}
  \right)p_a\tau_B(p_abp_a)p_a,
 \label{eq:same-B-valued-star-covariance}\\
 E_B(x^{aa}_\kappa bx^{aa}_\lambda)
 &=\frac12\left(
   \sum_\ell\alpha^a_{\kappa\ell}\alpha^a_{\lambda\ell}
  -\sum_\ell\beta^a_{\kappa\ell}\beta^a_{\lambda\ell}
  \right)p_a\tau_B(p_abp_a)p_a.
 \label{eq:same-B-valued-pseudo-covariance}
\end{align}
These are exactly the star-covariance and pseudocovariance obtained from
the Hadamard transform \eqref{eq:appendix-hadamard-transform}.  Mixed
covariances between distinct type-pair components vanish by freeness.  Thus
the displayed limit retains the full real covariance and has precisely the
normalization dictated by the \(n^{-1/2}\) entry scale.

\section{Amplification and target-independent linearization}
\label{app:amplification-linearization}

We give the operator-algebraic details behind
Lemma~\ref{lem:fixed-amplification} and
Proposition~\ref{prop:affine-pencil-criterion}.  Throughout this appendix,
all matrix levels and all polynomials are fixed before the matrix dimension
tends to infinity.

\subsection{The ultraproduct argument}

\begin{proof}[Proof of Lemma~\ref{lem:fixed-amplification}]
Strong convergence implies \(\sup_n\norm{Y_{j,n}}<\infty\) for every
coordinate.  Fix a free ultrafilter \(\omega\) on \(\N\) and form the
\(C^*\)-ultraproduct
\[
 \mathfrak M_\omega
 =\prod_{n}\M_{N_n}(\C)\big/
   \bigl\{(T_n):\lim_{n\to\omega}\norm{T_n}=0\bigr\}.
\]
For a scalar \(*\)-polynomial \(P\), define on the polynomial algebra
generated by \(y\)
\begin{equation}
 \pi_\omega(P(y))=[P(Y_n)]_\omega.
 \label{eq:ultraproduct-homomorphism}
\end{equation}
If \(P(y)=0\), then
\(\norm{P(Y_n)}\to0\), so the definition is independent of the polynomial
representative.  Moreover,
\[
 \norm{\pi_\omega(P(y))}
 =\lim_{n\to\omega}\norm{P(Y_n)}
 =\norm{P(y)}.
\]
It follows that \(\pi_\omega\) extends to an injective unital
\(*\)-homomorphism
\[
 C^*(y_1,\ldots,y_s,\one)\longrightarrow\mathfrak M_\omega.
\]
Every injective \(*\)-homomorphism of \(C^*\)-algebras is completely
isometric.  Using the canonical identification of the fixed matrix level
\(\M_q(\mathfrak M_\omega)\) with the corresponding matrix ultraproduct,
we obtain, for each fixed \(q\) and
\(Q\in\M_q(\C\langle z,z^*\rangle)\),
\begin{equation}
 \lim_{n\to\omega}\norm{Q(Y_n)}
 =\norm{(\operatorname{id}_{\M_q}\otimes\pi_\omega)(Q(y))}
 =\norm{Q(y)}.
 \label{eq:ultralimit-amplified-norm}
\end{equation}
The ultrafilter \(\omega\) was arbitrary.  A bounded scalar sequence whose
limit along every free ultrafilter is the same number converges ordinarily
to that number.  This proves
\eqref{eq:amplified-norm-convergence}.  Finally, if
\(Q=(Q_{ab})_{a,b\leq q}\), then
\[
 (\tr_q\otimes\tr_{N_n})Q(Y_n)
 =\frac1q\sum_{a=1}^q\tr_{N_n}Q_{aa}(Y_n),
\]
and scalar trace convergence gives
\eqref{eq:amplified-trace-convergence}.  The argument fixes \(q\) before
taking \(n\to\infty\), and yields no uniform control as \(q\) varies.
\end{proof}

\subsection{Quotient spectra and the polynomial upper bound}

Let
\[
 \mathfrak M_\infty
 =\prod_n\M_{N_n}(\C)\big/\bigoplus_n\M_{N_n}(\C),
\]
where the ideal consists of norm-null sequences, and let
\(\widehat Y_j=[(Y_{j,n})_n]\).  The coordinate pencils in
Proposition~\ref{prop:affine-pencil-criterion} show first that each coordinate
sequence is bounded: take \(q=1\), \(A_0=0\), and a single nonzero
coefficient.  For a bounded self-adjoint sequence \((T_n)\),
\begin{equation}
 \Spec([(T_n)])
 =\bigcap_{m\geq1}\overline{\bigcup_{n\geq m}\Spec(T_n)}.
 \label{eq:spectrum-sequence-quotient}
\end{equation}
Indeed, outside the set on the right the resolvents
\((T_n-\lambda)^{-1}\) are eventually uniformly bounded; changing finitely
many coordinates then gives an inverse in the quotient.  Conversely, a
sequence of spectral points tending to \(\lambda\) prevents a uniformly
bounded inverse modulo the norm-null ideal.

It follows from \eqref{eq:pencil-spectral-inclusion} and
\eqref{eq:spectrum-sequence-quotient} that, for every fixed self-adjoint
matrix coefficient list,
\begin{equation}
 \Spec\!\left(A_0\otimes\one+
      \sum_{j=1}^sA_j\otimes\widehat Y_j\right)
 \subseteq
 \Spec\!\left(A_0\otimes\one+
      \sum_{j=1}^sA_j\otimes y_j\right).
 \label{eq:exact-quotient-pencil-inclusion}
\end{equation}

For completeness, we explain the target-independent role of
linearization.  Given a matrix-valued \(*\)-polynomial \(P\), first replace
it by the self-adjoint polynomial \(\mathfrak h(P)\) from
\eqref{eq:hermitization-polynomial}.  The self-adjoint linearization trick
\cite[Lemma~1 and pp.~758--760]{HaagerupThorbjornsen2005} associates to
\(\mathfrak h(P)-\lambda\) a larger self-adjoint affine pencil whose
coefficients depend only on \(P\) and \(\lambda\).  Schur complementation
relates invertibility of this pencil to invertibility of
\(\mathfrak h(P)-\lambda\).  Therefore
\eqref{eq:exact-quotient-pencil-inclusion} transfers every resolvent point
of \(\mathfrak h(P(y))\) to the corresponding evaluation at
\(\widehat Y\).  Equivalently,
\begin{equation}
 \norm{P(\widehat Y)}\leq\norm{P(y)}.
 \label{eq:quotient-polynomial-upper-bound}
\end{equation}
Since the norm of a class in \(\mathfrak M_\infty\) is the limsup of the
representative norms, this is precisely
\eqref{eq:pencil-upper-bound}.  Notice that the construction uses no
structural property of \(y\) beyond being a tuple of operators.  In
particular, it is not tied to a semicircular or Gaussian target.

\subsection{Polarization and the faithful-trace lower bound}

We next justify the trace passage in
\eqref{eq:polynomial-trace-from-pencils}.  First fix a self-adjoint linear
pencil without constant term,
\[
 K_n=\sum_{j=1}^s A_j\otimes Y_{j,n},
 \qquad K=\sum_{j=1}^s A_j\otimes y_j.
\]
For any integer \(k\geq0\), choose \(p\) with \(2p\geq k\) and apply
\eqref{eq:pencil-even-moment-convergence} to \(tI+K_n\).  The coefficient
of \(t^{2p-k}\) in its normalized \(2p\)-th moment is
\[
 \binom{2p}{k}(\tr_q\otimes\tr_{N_n})(K_n^k).
\]
Since these are polynomials of uniformly bounded degree in \(t\), their
pointwise convergence for \(2p+1\) distinct real values of \(t\) implies
coefficientwise convergence.  We have therefore obtained convergence of
all powers of every self-adjoint linear pencil.

It remains to recover ordered words rather than only symmetrized products.
Let
\(w=z_{i_1}\cdots z_{i_k}\) with \(k\geq2\), and regard the indices
of \(\M_k(\C)\) cyclically.  For
\(\zeta=(\zeta_1,\ldots,\zeta_k)\in\mathbb T^k\), define self-adjoint
coefficient matrices
\begin{equation}
 A_j(\zeta)=
 \sum_{\ell:i_\ell=j}
 \left(\zeta_\ell E_{\ell,\ell+1}
       +\overline{\zeta_\ell}E_{\ell+1,\ell}\right),
 \label{eq:matrix-unit-polarization}
\end{equation}
where \(E_{k,k+1}=E_{k,1}\).  In the normalized trace of
\(\bigl(\sum_jA_j(\zeta)\otimes Y_{j,n}\bigr)^k\), the Fourier coefficient of
\(\zeta_1\cdots\zeta_k\) is exactly
\begin{equation}
 \tr_{N_n}\bigl(Y_{i_1,n}\cdots Y_{i_k,n}\bigr).
 \label{eq:word-fourier-coefficient}
\end{equation}
Indeed, the only nonzero products using every forward matrix unit once are
the \(k\) cyclic traversals of the directed cycle; their matrix traces
contribute a factor \(k\), canceled by \(\tr_k\), and cyclicity of
\(\tr_{N_n}\) identifies their word traces.  The coordinate bounds obtained
above give a uniform bound on these trigonometric polynomials, so dominated
convergence in the Fourier integral yields convergence of the trace of
\(w(Y_n)\).
Words of length zero or one are immediate.  By linearity, scalar polynomial
trace convergence follows, and matrix-valued trace convergence is entrywise
as in the proof of Lemma~\ref{lem:fixed-amplification}.  This proves
\eqref{eq:polynomial-trace-from-pencils}.

Finally, fix \(P\in\M_q(\C\langle z,z^*\rangle)\) and set
\[
 a=P(y)^*P(y)\in\M_q(\mathcal A)_+.
\]
For every \(p\geq1\), positivity and normalized trace give
\begin{align*}
 \norm{P(Y_n)}
 &\geq
 \left[(\tr_q\otimes\tr_{N_n})
       \bigl(P(Y_n)^*P(Y_n)\bigr)^p\right]^{1/(2p)}.
\end{align*}
Taking the liminf and using the polynomial trace convergence proves
\eqref{eq:faithful-trace-lower-bound}.  The trace
\(\tr_q\otimes\tau\) is faithful on \(\M_q(\mathcal A)\).  Hence, for
every positive \(a\),
\begin{equation}
 \lim_{p\to\infty}
 \bigl[(\tr_q\otimes\tau)(a^p)\bigr]^{1/p}=\norm{a}.
 \label{eq:faithful-lp-to-operator-norm}
\end{equation}
To see this directly, choose a continuous function supported in
\((\norm{a}-\varepsilon,\norm{a}]\) and nonzero at \(\norm{a}\).  Its
functional-calculus value at \(a\) is nonzero, so faithfulness gives positive
spectral mass in that
interval.  Letting first \(p\to\infty\) and then \(\varepsilon\downarrow0\)
proves \eqref{eq:faithful-lp-to-operator-norm}.  Thus
\eqref{eq:faithful-trace-lower-bound} yields
\[
 \liminf_{n\to\infty}\norm{P(Y_n)}\geq\norm{P(y)}.
\]
Combining this inequality with
\eqref{eq:quotient-polynomial-upper-bound} proves
Proposition~\ref{prop:affine-pencil-criterion}.

\section{Precise forms of the external probabilistic inputs}
\label{app:source-interfaces}

For reference, we record the exact forms in which the four external
probabilistic results enter the proof.  This also separates their roles: the
matrix comparison theorem treats bounded independent summands, the GOE result
identifies the Gaussian strong limit, and the two edge theorems control the
fourth-moment residuals.

\subsection{Brailovskaya--van Handel comparison}

The theorem and lemma numbering in this subsection follows the authors'
arXiv version \texttt{2201.05142}, the source text used for the interface
check; the bibliography records the corresponding published article.

The model in \cite[Section~2.1]{BvH2024} is
\[
 X=Z_0+\sum_{j=1}^N Z_j,
\]
where \(Z_0\) is an arbitrary deterministic self-adjoint matrix and the
\(Z_j\)'s are independent centered self-adjoint random matrices.  They need
not have identical distributions.  The Gaussian comparator \(G\) has the
same mean and the full covariance
\[
 \Cov(X)_{ij,k\ell}
 =\E\bigl[(X-\E X)_{ij}
          \overline{(X-\E X)_{k\ell}}\bigr].
\]
For a self-adjoint matrix this complex covariance determines the covariance
of all real and imaginary entry coordinates, so correlations internal to one
unordered-pair summand are retained.

The three parameters used in \cref{sec:bounded-transfer} are
\begin{align*}
 \sigma(X)^2&=\norm{\E(X-\E X)^2},\\
 \sigma_*(X)^2&=
 \sup_{\norm u=\norm v=1}\E\abs{\ip{u}{(X-\E X)v}}^2,\\
 R(X)&=\left\|\max_j\norm{Z_j}\right\|_{L^\infty}.
\end{align*}
They depend only on the centered random part and not on \(Z_0\).  Thus a
dense deterministic companion pencil is permitted, provided that all random
terms attached to one unordered pair are first grouped as in
\eqref{eq:typed-pair-summand}.

At ambient dimension \(D\), \cite[Theorem~2.6]{BvH2024} gives, for every
\(t\geq0\),
\[
 \mathbb P\left\{d_{\rm H}(\Spec X,\Spec G)>
 C\left(\sigma_*\sqrt t+R^{1/3}\sigma^{2/3}t^{2/3}+Rt\right)\right\}
 \leq De^{-t}.
\]
The assertion holds for every coupling of \(X\) and \(G\).  For
\(p\in\N\) and \(2p\leq q_{\rm mom}\leq\infty\), the first bound in
\cite[Theorem~2.9]{BvH2024} is
\begin{multline*}
 \left|\bigl(\E\tr_DX^{2p}\bigr)^{1/(2p)}
 -\bigl(\E\tr_DG^{2p}\bigr)^{1/(2p)}\right|\\
 \lesssim
 R_{q_{\rm mom}}(X)^{1/3}
 \sigma_{q_{\rm mom}}(X)^{2/3}p^{2/3}
 +R_{q_{\rm mom}}(X)p,
\end{multline*}
with \(R_\infty=R\) and \(\sigma_\infty=\sigma\).  Finally,
\cite[Lemma~9.20]{BvH2024} states that, for \(t\geq p\),
\begin{multline*}
 \mathbb P\left\{
 \left|\bigl(\tr_DX^{2p}\bigr)^{1/(2p)}
 -\bigl(\E\tr_DX^{2p}\bigr)^{1/(2p)}\right|
 \right.\\
 \left.\hspace{8mm}\geq
 C\left(\sigma_*+R^{1/2}
       \bigl(\E\tr_DX^{2p}\bigr)^{1/(4p)}\right)\sqrt t+CRt
 \right\}\leq2e^{-t}.
\end{multline*}
The normalization is \(\tr_D=D^{-1}\Tr\).  In our application \(D=qn\)
with \(q\) fixed, which gives precisely
\eqref{eq:bvh-spectrum-rate}--\eqref{eq:bvh-moment-concentration-rate}.

The linearization proposition displayed in \cite[Proposition~9.18]{BvH2024} is
formulated for a semicircular target without an adjoining deterministic
tuple.  We therefore use it neither to introduce companions nor to identify
mixed companion norms.  Those steps are supplied by the target-independent
\cref{prop:affine-pencil-criterion}, whose affine tuple contains both the
random coordinates and all deterministic companions.

\subsection{Fan--Sun--Wang and the doubled GOE model}

The theorem numbering in this subsection follows the authors' arXiv version
\texttt{1903.09592}, the source text used for the interface check; the
bibliography records the corresponding published article.

In \cite[Theorem~4.3]{FanSunWang2021}, the random matrices are independent
GOE matrices of size \(N\), with off-diagonal variance \(1/N\) and diagonal
variance \(2/N\).  If a deterministic tuple converges jointly strongly, then
adjoining these GOE matrices gives almost-sure joint trace and norm
convergence to a free semicircular family adjoined to the deterministic
limit.  The source explicitly permits the deterministic matrices to lie in
\(\M_N(\C)\); it does not require them to be real symmetric.  The norm
assertion is stated for self-adjoint polynomials.  For an arbitrary polynomial
\(Q\), the same conclusion follows
by applying it to the self-adjoint polynomial \(Q^*Q\); trace convergence
follows by separating real and imaginary parts.

We apply this result at \(N=2n\) to one deterministic tuple containing the
four \(2\times2\) matrix units, \(I_2\otimes P_{a,n}\), and
\(I_2\otimes A_{j,n}\).  If \(W_{2n}\) has the above GOE normalization, then
\[
 \sqrt2\,(E_{11}\otimes I_n)W_{2n}(E_{22}\otimes I_n)
\]
has, in its \((1,2)\) corner, an entire real Ginibre matrix with independent
\(N(0,1/n)\) entries.  This includes the entries with matching within-corner
indices: they remain globally off-diagonal entries of the \(2n\)-dimensional
GOE.  Compression is contractive, and identifying a compressed corner with
\(M_n(\C)\) is isometric; on the first corner
\[
 \tr_n=2\tr_{2n}.
\]
These are the normalization and trace factors used in
\cref{sec:gaussian-comparator}.

The theorem of Fan--Sun--Wang is invoked only at scalar matrix level.  Passage
to every separately fixed coefficient size is the complete-isometry argument
of \cref{lem:fixed-amplification}; no uniformity for a growing amplification
is asserted.

\subsection{Bai--Yin for cross-type residuals}

The \(k=1\) case of \cite[Theorem~2.1]{BaiYin1986} implies that, for one
nested infinite real iid array with centered entry law of variance
\(\sigma^2\) and finite fourth moment,
\[
 \limsup_{n\to\infty}\norm{n^{-1/2}W_n}\leq2\sigma
 \qquad\text{almost surely}.
\]
Applying the real result separately to the real and imaginary parts of a
centered complex entry \(z\) gives
\[
 \limsup_n\norm{n^{-1/2}Z_n}
 \leq2\bigl(\norm{\Rea z}_2+\norm{\Ima z}_2\bigr)
 \leq2\sqrt2\norm z_2.
\]
Independence of the real and imaginary parts is not used.  In a typed
cross-block, this full iid array is only an auxiliary completion; orthogonal
compression recovers the observed block.  The two directed orientations are
completed in separate arrays, so their prescribed correlation inside the
original pair atom is not replaced by independence.

\subsection{Anderson for same-type symmetric and skew residuals}

For one nested real symmetric Wigner array, \cite[Theorem~3]{Anderson2013}
restates the Bai--Yin conclusion
\[
 \lambda_{\max}(W_n/\sqrt n)\longrightarrow2\sigma
 \qquad\text{almost surely}.
\]
Applying the theorem to \(W_n\) and \(-W_n\), and intersecting the two
probability-one events, gives
\(\norm{W_n/\sqrt n}\to2\sigma\).  The centered diagonal hypothesis in that
statement is satisfied in our auxiliary completions because their diagonal
is identically zero.

The skew coordinates use the explicit twisted model in
\cite[Remark~4]{Anderson2013}: from a real symmetric upper-triangular array it
forms the Hermitian matrix \(iA_n\), where \(A_n\) is real skew-symmetric.
Apply the same remark to the two auxiliary upper-triangular laws \(q\) and
\(-q\).  The outputs are \(iA_n\) and \(-iA_n\), so the two-sign argument
gives
\[
 \norm{A_n/\sqrt n}=\norm{iA_n/\sqrt n}\longrightarrow2\sigma
 \qquad\text{almost surely}.
\]
If a coordinate has variance zero, it vanishes almost surely after centering;
this is also the degenerate case singled out in
\cite[Remark~3]{Anderson2013}.  The argument is applied one real coordinate
at a time and therefore does not invoke the independence assumptions from
Anderson's broader complex-Wigner setup.  This is why the four-coordinate
decomposition in \cref{sec:exact-l4} permits arbitrary dependence among the
real and imaginary parts of the original pair atom.

\section*{Declarations}

\paragraph{Data availability.}
No datasets were generated or analyzed in this theoretical work.

\paragraph{Conflict of interest.}
The authors declare no conflict of interest.

\paragraph{AI disclosure.}
This paper was prepared with the assistance of AI-powered academic writing
tools.  The workflow included literature-search strategy design, structure
planning, draft writing, citation verification, and formatting.  The authors
remain solely responsible for all mathematical content, arguments, citations,
and conclusions, and for the accuracy and integrity of the work.

{\footnotesize
\bibliographystyle{alpha}
\bibliography{references}
}

\end{document}